\documentclass[11pt]{article}
\usepackage{amsmath,amssymb,amsthm}
\usepackage{comment,enumerate}
\usepackage{thm-restate}

\usepackage[margin=1in]{geometry}
\usepackage{hyperref}
\usepackage{tikz}
\usepackage{graphicx}
\usepackage{url}
\usepackage[mathlines]{lineno}
\usepackage{multirow}
\usepackage{dsfont} 
\usepackage{color}
\definecolor{red}{rgb}{1,0,0}
\def\red{\color{red}}
\definecolor{blue}{rgb}{0,0,1}

\definecolor{green}{rgb}{0,.6,0}

\usepackage{tikz}
\usetikzlibrary{positioning}
\tikzset{
every picture/.style=thick,
bluenode/.style={circle, draw=black, fill=blue!40, very thick, minimum size=6mm},
whitenode/.style={circle, draw=black, fill=black!5, very thick, minimum size=2mm},
squarednode/.style={rectangle, draw=red!60, fill=red!5, very thick, minimum size=5mm},
every loop/.style={min distance=8mm} 
}

\setlength{\textheight}{8.8in}
\setlength{\textwidth}{6.5in}

\newtheorem{thm}{Theorem}[section]
\newtheorem{cor}[thm]{Corollary}
\newtheorem{lem}[thm]{Lemma}
\newtheorem{prop}[thm]{Proposition}

\theoremstyle{definition}
\newtheorem{rem}[thm]{Remark}

\theoremstyle{definition}
\newtheorem{defn}[thm]{Definition}

\theoremstyle{definition}
\newtheorem{ex}[thm]{Example}

\def\mtx#1{\begin{pmatrix} #1 \end{pmatrix}}

\DeclareMathOperator{\supp}{supp}

\DeclareMathOperator{\TS}{TS_S}
\DeclareMathOperator{\TSS}{TS_S}
\DeclareMathOperator{\TSM}{TS_M}
\DeclareMathOperator{\TSA}{TS_A}
\newcommand{\Ec}{\overline{E}}

\DeclareMathOperator{\tr}{tr}
\DeclareMathOperator{\Span}{span}
\DeclareMathOperator{\mult}{mult}
\newcommand{\verA}{\Psi_A}
\newcommand{\verM}{\Psi_M}
\newcommand{\verS}{\Psi_S}
\newcommand{\Gc}{\overline{G}}

\newcommand{\R}{\mathbb{R}}

\newcommand{\Rn}{\R^{n}}

\newcommand{\cV}{\mathcal{V}}
\newcommand{\EOLam}{\mei_{\mathcal{O}\cup\Lambda'}}


\newcommand{\bx}{{\bf x}}
\newcommand{\bv}{{\bf v}}
\newcommand{\bw}{{\bf w}}
\newcommand{\bz}{{\bf z}}

\newcommand{\bb}{{\bf b}}
\newcommand{\ba}{{\bf a}}

\newcommand{\bff}{{\bf f}}
\newcommand{\by}{{\bf y}}

\newcommand{\be}{{\bf e}}
\newcommand{\bm}{{\bf m}}
\newcommand{\bzero}{{\bf 0}}
\newcommand{\oml}{{\bf m}}

\newcommand{\dcup}{\,\dot{\cup}\,}
\newcommand{\x}{\times}
\newcommand{\lam}{\lambda}
\newcommand{\noi}{\noindent}
\newcommand{\bit}{\begin{itemize}}
\newcommand{\eit}{\end{itemize}}
\newcommand{\ben}{\begin{enumerate}}
\newcommand{\een}{\end{enumerate}}
\newcommand{\beq}{\begin{equation}}
\newcommand{\eeq}{\end{equation}}
\newcommand{\bea}{\begin{eqnarray*}}
\newcommand{\eea}{\end{eqnarray*}}
\newcommand{\bean}{\begin{eqnarray}}
\newcommand{\eean}{\end{eqnarray}}
\newcommand{\bpf}{\begin{proof}}
\newcommand{\epf}{\end{proof}\ms}
\newcommand{\bmt}{\begin{bmatrix}}
\newcommand{\emt}{\end{bmatrix}}
\newcommand{\ms}{\medskip}

\newcommand{\beqa}{\begin{array}}
\newcommand{\eeqa}{\end{array}}

\newcommand{\mptn}{\mathcal{S}} 
\newcommand{\mrk}{\mathcal{R}} 
\newcommand{\mei}{\mathcal{E}} 
\newcommand{\mmu}{\mathcal{U}} 
\newcommand{\G}{\mathcal{G}}

\newcommand{\vect}{\operatorname{vec}}

\DeclareMathOperator{\Mplus}{M_{+}}

\DeclareMathOperator{\M}{M}

\def\spec{\operatorname{spec}}

\def\trans{^{\top}}

\begin{document}
\title{The inverse eigenvalue problem of a graph: \\
Multiplicities and minors}
\author{Wayne Barrett\thanks{Department of Mathematics, Brigham Young University, Provo UT 84602, USA (wb@mathematics.byu.edu, H.Tracy@gmail.com).}
\and Steve Butler\thanks{Department of Mathematics, Iowa State University, Ames, IA 50011, USA (butler@iastate.edu, hogben@iastate.edu, chlin@iastate.edu, myoung@iastate.edu)}
\and Shaun M. Fallat\thanks{Department of Mathematics and Statistics, University of Regina, Regina, Saskatchewan, S4S0A2, Canada (shaun.fallat@uregina.ca).}
 \and H. Tracy Hall\footnotemark[1]
\and  Leslie Hogben\footnotemark[2]\, \thanks{American Institute of Mathematics, 600 E. Brokaw Road, San Jose, CA 95112, USA (hogben@aimath.org).}
\and Jephian C.-H. Lin\footnotemark[2]
\and Bryan L.  Shader\thanks{Department of Mathematics, University of Wyoming,  Laramie, WY 82071, USA (bshader@uwyo.edu).}
\and Michael Young\footnotemark[2]  
}

 

\maketitle

\begin{abstract} The inverse eigenvalue problem of  a given graph $G$ is to determine all possible spectra of  real symmetric matrices whose off-diagonal entries are governed by
the adjacencies in $G$.  Barrett et al. introduced the Strong Spectral Property (SSP) and the Strong Multiplicity  Property (SMP)  in \cite{genSAP}.  In that paper it was shown that if a graph  has a matrix with the SSP (or the SMP) then a supergraph has a matrix with the same spectrum (or ordered multiplicity list) augmented with simple eigenvalues if necessary, that is, subgraph monotonicity. In this paper we extend this to a form of minor monotonicity, with restrictions on where the new eigenvalues appear.  These ideas are applied to solve the inverse eigenvalue problem for all graphs of order five, and to  characterize forbidden minors of graphs having at most one multiple eigenvalue.
\end{abstract}

\noindent{\bf Keywords.} Inverse Eigenvalue Problem, Strong Arnold Property, Strong Spectral Property, Strong Multiplicity Property, Colin de Verdi\`ere type parameter, maximum multiplicity, distinct eigenvalues.\medskip

\noindent{\bf AMS subject classifications.} 05C83, 05C50, 15A18, 15A29, 26B10, 	58C15  	

\section{Introduction}


 Inverse eigenvalue problems appear in various contexts throughout mathematics and engineering, and refer to determining all possible lists of eigenvalues (spectra)  for matrices fitting some description. Graphs often describe relationships in a physical setting, such as control of a system, and the eigenvalues of associated matrices govern the behavior of the system.   The {\em inverse eigenvalue problem of a graph (IEPG)} refers to determining the possible spectra of real symmetric matrices whose pattern of nonzero off-diagonal entries is described by the edges of a given graph.  The IEPG is motivated by inverse problems arising in 
the theory of vibrations.  The study of the vibrations of a string leads to  classical inverse 
Sturm-Liouville problems, and its generalizations continue to be an active area of research 
(see \cite{G} and references therein). The IEPG where $G$ is a path corresponds to a discretization 
of the  inverse Sturm-Liouville problem for the string, and leads to the classical study of the inverse
eigenvalue problem for Jacobi matrices (that is, irreducible, tridiagonal matrices) 
that was resolved by the sequence of papers by Downing and Householder \cite{DH},  and Hochstadt \cite{H1, H2}.  Thus, as noted in \cite{G}, the IEPG can be viewed as  the 
inverse problem for a vibrating system with prescribed structure given by $G$.

The IEPG was originally approached through   the study of  ordered multiplicity lists for eigenvalues.  It was thought by many researchers in the field that at least for a tree $T$, determining the ordered multiplicity lists of $T$ would suffice to determine the spectra of matrices described by $T$. When it was shown in \cite{BF04} that this was not the case, the focus of much of the research in the area shifted to the narrower question of maximum eigenvalue multiplicity, or equivalently maximum nullity or minimum rank of matrices described by the graph.  While the maximum multiplicity has been determined for many  families of graphs, including all trees, in general it remains an open question and active area of research (see \cite{FH, HLA2} for  extensive bibliographies).  More recently, there has been progress on the related question of determining the minimum number of distinct eigenvalues of matrices  described by a given graph \cite{AACFMN13, genSAP}.  

Maximum nullity, minimum number of distinct eigenvalues, and ordered multiplicity lists all provide information that can in some cases be used to solve the inverse eigenvalue problem for a specific graph or family of graphs, but the question of the structures or properties that are necessary  to allow this to be done more generally is open. Recently new tools,  the Strong Spectral Property (SSP) and the Strong Multiplicity Property (SMP), were developed \cite{genSAP}; these  offer hope for progress on what was previously seen as an intractable problem.

In Section \ref{sord5} we use the SSP to solve the IEPG for graphs of order 5 (order 4 was solved in \cite{BNSY14}, but the SSP provides a shorter proof).  
Every  graph of order 5 that allows an SMP matrix for an ordered multiplicity list also allows an SSP matrix for the same ordered multiplicity list. This naturally raises the question of whether there exists a graph that has a matrix $A\in\mptn(G)$ with the SMP, but does not allow the SSP for any matrix having  the ordered multiplicity list of $A$.  In Section \ref{sSMPnotSSP} we exhibit such a matrix and establish its properties.  (Of course it is much easier to construct a matrix that has the SMP but not the SSP; see, for example,  \cite[Remark~22]{genSAP}.)

Another  main result of this paper, which we use to establish characterizations of multiple eigenvalues by forbidden minors, is the Minor Monotonicity Theorem (Theorem \ref{minormon}).  This theorem says that if $G$ is a minor of $H$ and $A\in\mptn(G)$ has the SSP (or the SMP), then there is a matrix $B\in\mptn(H)$ with the SSP (or the SMP) and $\spec(A)\subseteq \spec(B)$ (or the ordered multiplicity list of $B$ can be obtained from that of $A$ by adding 1s).  Additional eigenvalues are added as simple eigenvalues and can always be added at the ends of the spectrum of $A$; in the case of a vertex deletion they can be added arbitrarily, but for a minor obtained by contraction the new eigenvalues may be restricted to being sufficiently far from the spectrum of $A$.

Minor monotonicity enables forbidden minor characterizations, and as an illustration of this  in Section \ref{smultevals} we use Theorem \ref{minormon} to determine the forbidden minors for a graph to have at most one multiple eigenvalue and to not have two consecutive multiple eigenvalues.   

In Section \ref{sMLL} we establish additional tools for constructing matrices with prescribed spectra and multiplicity lists. The foundation  is the Matrix Liberation Lemma (Lemma \ref{mtxliblem}), which describes how the three strong properties indicate the ability of a matrix
to effect any sufficiently small perturbation of its nonzero entries with complete freedom
while preserving its rank (SAP), its exact spectrum (SSP), or its ordered list of eigenvalue multiplicities (SMP).
One consequence of this is the Augmentation Lemma (Lemma \ref{shaun}), which gives  conditions under which  an eigenvalue $\lambda$ of an SSP matrix $A\in \mptn(G-v)$ guarantees the existence of a matrix $B\in\mptn(G)$ with $\spec(B)=\spec(A)\cup\{\lambda\}$ (i.e., $\mult_B(\lambda)=\mult_A(\lambda)+1$ and the other eigenvalues and multiplicities are unchanged from $A$ to $B$).


 In the next section we introduce the necessary terminology, including definitions of the SSP and the SMP.  Sections \ref{sminormon} and \ref{sMLL} 
 are rather technical and do not use any results from Section \ref{sord5}, \ref{sSMPnotSSP}, and \ref{smultevals}, so we 
 defer the proofs of the results therein to the later part of the paper.


\section{Terminology, notation, and background}
\label{sterm}

 All matrices are real and symmetric; $O$ and $I$ denote  zero and identity matrices of appropriate size, respectively. If the distinct eigenvalues of $A$ are $\lambda_1< \lambda_2< \cdots< \lambda_q$ and the multiplicities 
of these eigenvalues are $m_1, m_2, \ldots, m_q$ respectively, then the {\it ordered multiplicity list of $A$} 
is $\oml(A)=(m_1, m_2, \ldots, m_q)$.  The {\em spectral radius} of $A$ is $\rho(A)=\max \{|\lam|: \lam\in\spec(A)\}$.  
For an $n\x n$ matrix $M$ and $\alpha,\beta \subseteq \{1,2,\ldots,n\}$,
the submatrix of $M$ lying in rows indexed by
$\alpha$ and columns indexed by $\beta$ is denoted
by $M[\alpha,\beta]$; in the case that $\beta=\{1,2,\ldots,n\}$ this can be denoted by $M[\alpha, :]$, and similarly for $\alpha=\{1,2,\ldots,n\}$.

A symmetric matrix $A$ has the {\it Strong Arnold Property} (or $A$ has the SAP for short)
if the only symmetric matrix $X$ satisfying $A\circ X=O$, $I\circ X=O$ and $AX=O$ is $X=O$.  
An $n \times n$ symmetric matrix $A$ satisfies the {\it Strong Multiplicity Property} (or $A$ has the SMP)
provided the only symmetric matrix $X$ satisfying $A\circ X=O$, $I\circ X=O$, $[A,X]=O$, and $\tr(A^iX)=0$ for $i=2,\ldots, n-1$ is $X=O$ \cite[Definition~18 and Remark~19]{genSAP}. 
A symmetric matrix $A$ has the {\it Strong Spectral Property} (or $A$ has the SSP)
if the only symmetric matrix $X$ satisfying $A\circ X=O$, $I\circ X=O$ and $[A,X]=O$ is $X=O$ \cite[Definition~8]{genSAP}.  Clearly the SSP implies the SMP, and the SMP implies $A+\lambda I$ has the SAP for every real number $\lambda$.

The {\em graph} $\G(A)$ of a symmetric $n\x n$ matrix $A$ is the (simple, undirected, finite) graph with vertices $\{1,\dots,n\}$ and edge $ij$ such that $i\ne j$ and $a_{ij} \neq 0$.   For a  graph $G=(V,E)$ with  vertex set $V=\{1,\dots,n\}$ and edge set $E$, 
the {\em set of symmetric matrices described by $G$}, $\mathcal{S}(G)$,  is the set of all real symmetric $n \times n$ matrices $A=[a_{ij}]$  such that  $\G(A)=G$.  The IEPG for $G$ asks for the determination of all possible spectra of matrices in $\mathcal{S}(G)$.
The  {\em maximum multiplicity}  of $G$ is
$\M(G) = \max\{\operatorname{mult}_A (\lambda) :  A 
\in \mathcal{S}(G),\;\lambda\in \operatorname{spec}(A)\}$,  and the {\em minimum rank} of $G$ is 
$\operatorname{mr}(G) = \min\{\operatorname{rank } A :  A \in \mathcal{S}(G)\}$.  It is easily seen that $\M(G)=\max\{\operatorname{null } A :  A 
\in \mathcal{S}(G)\}$, so $\M(G)$ is also called the {\em maximum nullity} of $G$, and $\operatorname{mr}(G)+\M(G)=|G|$, where $|G|$ is the number of vertices of $G$. The number of distinct eigenvalues of $A$ is denoted by $q(A)$, and   $q(G) = \min\{q(A)\; \mbox{:}\; A \in \mptn(G)\}$.

If $v$ is a vertex of a graph $G$, the {\em neighborhood} of $n$ is the set of vertices adjacent to $v$, and is denoted by $N_G(v)$. 
The {\em closed neighborhood} of $v$ is $N_G[v]:=N_G(v)\cup \{v\}$.   The {\em complement} $\overline{G}$ of $G$ is the graph with the same vertex set as $G$ and edges exactly where $G$ does not have edges.  A {\em generalized  star} is a tree with at most one vertex of degree three or more.  A {\em unicyclic graph} is a graph with one cycle.

The well known Parter-Wiener Theorem for trees plays a fundamental role in the study of the IEPG, and we state it here.

\begin{thm}[Parter-Wiener Theorem]\label{PWthm} {\rm \cite{JLDS2, P, W84}}  Let $T$ be a tree, $A\in\mptn(T)$ and $\mult_A(\lam)\ge 2$. Then there exists a vertex $v$ such that $\mult_{A(v)}(\lam)=\mult_A(\lam)+1$ and $\lam$ is an eigenvalue  of the principal submatrices of $A$ corresponding to at least three components of $T-v$.
\end{thm}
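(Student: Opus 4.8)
The plan is to reduce to a null-space problem and to analyze how the multiplicity of $\lam$ changes under a single vertex deletion, using the tree structure to localize that change to the branches at the deleted vertex. First I would replace $A$ by $A-\lam I$, which leaves $\G(A)=T$ unchanged and makes $\lam=0$ with $\mult_A(0)=m\ge 2$. For any vertex $v$, deleting $v$ gives $A(v)$, which (since $T$ is a tree) is block diagonal with one block $A[T_i]$ per component $T_1,\dots,T_k$ of $T-v$, while the edges at $v$ form a border vector $b$ supported on the neighbors $n_1,\dots,n_k$ of $v$ (one $n_i$ in each $T_i$). Cauchy interlacing gives $\mult_{A(v)}(0)\in\{m-1,m,m+1\}$, and the standard bordered-matrix refinement of interlacing says that $v$ is \emph{Parter} (that is, $\mult_{A(v)}(0)=m+1$) exactly when $b\not\perp\ker A(v)$. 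Because $\ker A(v)=\bigoplus_i\ker A[T_i]$ and $b^{\top}z=\sum_i b_{n_i}z_{n_i}$, this translates to: $v$ is Parter iff some branch $T_i$ carries a null vector of $A[T_i]$ that is nonzero at $n_i$. When $v$ is Parter we also record $\sum_i\mult_{A[T_i]}(0)=\mult_{A(v)}(0)=m+1$.

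For the existence of a Parter vertex (the ``Parter'' half), I would argue by contradiction. If no vertex is Parter, then for every $v$ and every branch $T_i$ of $T-v$, the criterion forces every null vector of $A[T_i]$ to vanish at $n_i$. Rooting $T$ arbitrarily, I would prove by induction on the size of a rooted branch $T_u$ (the component of $T-\mathrm{parent}(u)$ containing $u$) that $0\notin\spec(A[T_u])$: for a leaf this is the vanishing condition applied to the $1\times1$ block, and for the inductive step $A[T_u]$ borders $\bigoplus_j A[T_{c_j}]$ over the child branches, which has trivial kernel by induction, so $\mult_{A[T_u]}(0)\le 1$; a nonzero null vector would have to vanish at $u$ by the no-Parter hypothesis and would then restrict to a nonzero null vector of $\bigoplus_j A[T_{c_j}]$, a contradiction. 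Applying the bordering step once more at the root shows $\mult_A(0)\le 1$, contradicting $m\ge 2$; hence a Parter vertex exists.

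For the three-branch conclusion (the ``Wiener'' half) I would induct on $|T|$. Pick a Parter vertex $v$, so $\sum_i\mult_{A[T_i]}(0)=m+1\ge 3$. If at least three branches carry $0$ we are done. Otherwise the whole amount is concentrated in one or two branches, so some branch $T_j$ has $\mult_{A[T_j]}(0)\ge 2$; apply the inductive hypothesis to $A[T_j]\in\mptn(T_j)$ to obtain a vertex $w\in T_j$ that is Parter for $A[T_j]$ and has at least three $0$-branches in $T_j-w$. The components of $T-w$ are exactly those of $T_j-w$, except that the single branch $D$ of $T_j-w$ containing $n_j$ merges with $v$ and the rest of the tree into one larger branch $D'$. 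Since at most one of the three $0$-branches of $T_j-w$ equals $D$, at least two survive unchanged as $0$-branches of $T-w$, and I would finish by showing that $D'$ (or a fourth branch) also carries $0$.

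The main obstacle is precisely this last step: $0\in\spec(A[D])$ does not force $0\in\spec(A[D'])$ for the enlarged block $D'\supseteq D$, so the merged branch need not obviously carry the eigenvalue. I would handle it by choosing $w$ inside $T_j$ extremally rather than arbitrarily: for instance, among Parter vertices of $T_j$ with at least three $0$-branches, take one whose branch toward $n_j$ is \emph{not} a $0$-branch (so all three $0$-branches survive in $T-w$), or equivalently relocate $v$ step by step into the heaviest branch so as to minimize $\max_i\mult_{A[T_i]}(0)$ until three distinct branches must carry $0$. Verifying that such an extremal or relocated choice is still Parter in $T$ (again through the $b\not\perp\ker$ criterion, with a witnessing eigenvector supported on a surviving branch) and that it genuinely yields three $0$-branches is the delicate heart of the argument; everything else is bookkeeping with interlacing and the block structure of $A(v)$.
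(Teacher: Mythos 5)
Theorem \ref{PWthm} is not proved in this paper at all; it is quoted from \cite{JLDS2, P, W84}, so your proposal can only be judged on its own merits. The ``Parter'' half of your argument is correct and complete: the bordered-matrix criterion ($v$ is Parter for $\lam=0$ if and only if $b\not\perp\ker A(v)$, which over a tree localizes to some branch having a null vector that is nonzero at the neighbor of $v$) is right, and your rooted induction under the no-Parter hypothesis, showing every rooted branch $T_u$ satisfies $0\notin\spec(A[T_u])$ and hence $\mult_A(0)\le 1$ at the root, is a valid and tidy existence proof.

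The ``Wiener'' half, however, has a genuine gap, which you yourself flag: after recursing into a branch $T_j$ with $\mult_{A[T_j]}(0)=m_j\ge 2$ and obtaining $w$ from the inductive hypothesis, you can conclude neither that $w$ is Parter for $A$ nor that the merged branch $D'$ carries the eigenvalue, and your proposed repairs are not justified. In particular, a Parter vertex of $T_j$ with three $0$-branches whose branch toward $n_j$ is not a $0$-branch need not exist (for the adjacency matrix of a star, the only admissible $w$ is the center and every branch at it is a $0$-branch), and the ``relocation'' scheme is never made precise. The missing idea that closes the induction is an exact multiplicity bookkeeping obtained by squeezing two interlacing inequalities against each other. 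Writing $D$ for the branch of $T_j$ at $w$ toward $n_j$ and $D'$ for its enlargement in $T$, interlacing at $v$ inside $D'$ gives $\mult_{A[D']}(0)\ge \mult_{A[D]}(0)+m-m_j$ (the components of $D'-v$ are $D$ and the branches $T_i$, $i\ne j$, whose multiplicities total $m+1-m_j$ because $v$ is Parter), while interlacing at $w$ in $T$ gives $\mult_{A(w)}(0)\le m+1$; since the branches of $T_j$ at $w$ other than $D$ contribute exactly $m_j+1-\mult_{A[D]}(0)$, adding these forces equality on both sides. Hence $w$ is \emph{automatically} Parter for $A$ -- no extremal choice needed -- and $\mult_{A[D']}(0)=\mult_{A[D]}(0)+m-m_j$ exactly. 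This formula shows the three-branch count can only fail when $m_j=m+1$ (i.e., $T_j$ was the unique $0$-branch at $v$) and $\mult_{A[D]}(0)=1$: whenever $v$ has at least two $0$-branches one has $m_j\le m$, so $\mult_{A[D']}(0)\ge\mult_{A[D]}(0)$ and either all three original $0$-branches survive or $D'$ supplies the third. In the one bad case, $w$ is a Parter vertex of $A$ with exactly two $0$-branches of total multiplicity $m+1\ge 3$, so one of them has multiplicity at least $2$, and a second application of the inductive hypothesis inside that (strictly smaller) branch -- now in the two-branch situation, which always succeeds -- finishes the proof. Without this lemma, or an equivalent, your induction does not close; with it, your outline becomes essentially the argument of \cite{JLDS2}.
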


There are several (well known) consequences of Theorem \ref{PWthm} and interlacing  (see, for example, \cite
{JLS03}).

\begin{lem}\label{firstlast}   The first and last eigenvalue of a tree must be simple.
\end{lem}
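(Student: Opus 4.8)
The plan is to reduce everything to a single statement about the smallest eigenvalue: if the smallest eigenvalue of a tree matrix has multiplicity at least two, a contradiction arises. The statement for the largest eigenvalue then follows for free by applying the same argument to $-A$, which satisfies $\G(-A)=T$ and whose smallest eigenvalue is the negative of the largest eigenvalue of $A$.

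So I would fix $A\in\mptn(T)$, let $\lambda$ be its smallest eigenvalue, and suppose for contradiction that $m:=\mult_A(\lambda)\ge 2$. Since $T$ is a tree and $\mult_A(\lambda)\ge 2$, the Parter--Wiener Theorem (Theorem~\ref{PWthm}) supplies a vertex $v$ with $\mult_{A(v)}(\lambda)=m+1$, where $A(v)$ denotes the $(n-1)\x(n-1)$ principal submatrix of $A$ obtained by deleting the row and column indexed by $v$.

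The crux is to contradict this using Cauchy interlacing. Writing the eigenvalues of $A$ as $\alpha_1\le\cdots\le\alpha_n$ and those of $A(v)$ as $\beta_1\le\cdots\le\beta_{n-1}$, interlacing gives $\alpha_i\le\beta_i\le\alpha_{i+1}$ for every $i$. Because $\lambda$ is the smallest eigenvalue of $A$ with multiplicity $m$, we have $\alpha_1=\cdots=\alpha_m=\lambda<\alpha_{m+1}$. For each $i\ge m+1$, interlacing yields $\beta_i\ge\alpha_i\ge\alpha_{m+1}>\lambda$, so none of $\beta_{m+1},\dots,\beta_{n-1}$ equals $\lambda$; hence $\mult_{A(v)}(\lambda)\le m$. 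This contradicts $\mult_{A(v)}(\lambda)=m+1$, and therefore $\mult_A(\lambda)=1$.

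The only nontrivial step is the interlacing count, whose content is precisely that deleting a single vertex can never increase the multiplicity of an extreme (smallest or largest) eigenvalue—whereas Parter--Wiener forces exactly such an increase. I expect no real obstacle beyond getting this count right; everything else is an immediate appeal to Theorem~\ref{PWthm} together with the symmetry $A\mapsto -A$.
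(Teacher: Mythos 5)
Your proof is correct and follows exactly the route the paper intends: the paper states Lemma \ref{firstlast} as a well-known consequence of the Parter--Wiener Theorem combined with Cauchy interlacing (citing \cite{JLS03}) rather than writing out details, and your argument—Parter--Wiener forces the multiplicity of an eigenvalue to increase under some vertex deletion, while interlacing shows the multiplicity of the extreme eigenvalue can never increase under deletion, with the $-A$ symmetry handling the largest eigenvalue—is precisely that intended argument, carried out correctly.
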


\begin{lem}\label{collision} If $T$ is a  generalized star and $A\in\mptn(T)$, then $A$ cannot have consecutive multiple eigenvalues. \end{lem}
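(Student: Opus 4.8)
The plan is to argue by contradiction, using the Parter--Wiener Theorem to force a single common Parter vertex and then extracting an impossibility from Cauchy interlacing. Suppose $A\in\mptn(T)$ has consecutive multiple eigenvalues $\lambda<\mu$; that is, $\mult_A(\lambda)\ge 2$, $\mult_A(\mu)\ge 2$, and $A$ has no eigenvalue strictly between $\lambda$ and $\mu$. Applying Theorem \ref{PWthm} to $\lambda$ produces a vertex $v$ with $\mult_{A(v)}(\lambda)=\mult_A(\lambda)+1$ for which $\lambda$ is an eigenvalue of the principal submatrices of at least three components of $T-v$; in particular $T-v$ has at least three components, so $\deg_T(v)\ge 3$. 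Applying Theorem \ref{PWthm} to $\mu$ likewise yields a vertex $w$ with $\deg_T(w)\ge 3$ and $\mult_{A(w)}(\mu)=\mult_A(\mu)+1$. Since a generalized star has at most one vertex of degree three or more, $v=w=:c$, and hence $\mult_{A(c)}(\lambda)=\mult_A(\lambda)+1$ and $\mult_{A(c)}(\mu)=\mult_A(\mu)+1$ are witnessed by the \emph{same} single vertex deletion.

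Next I would set up interlacing between $A$ and the one-vertex deletion $A(c)$. Write the eigenvalues of $A$ as $\alpha_1\le\cdots\le\alpha_n$ and those of $A(c)$ as $\beta_1\le\cdots\le\beta_{n-1}$, so Cauchy interlacing gives $\alpha_i\le\beta_i\le\alpha_{i+1}$ for all $i$. Put $m=\mult_A(\lambda)$ and $k=\mult_A(\mu)$, and record the two blocks as $\alpha_{p+1}=\cdots=\alpha_{p+m}=\lambda$ followed immediately (since $\lambda,\mu$ are consecutive) by $\alpha_{p+m+1}=\cdots=\alpha_{p+m+k}=\mu$. The indices $i$ for which $\beta_i=\lambda$ is possible are exactly $i\in\{p,p+1,\dots,p+m\}$, of which there are only $m+1$; so $\mult_{A(c)}(\lambda)=m+1$ forces every one of these to equal $\lambda$, in particular $\beta_{p+m}=\lambda$. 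The symmetric count for $\mu$ shows the possible indices are $\{p+m,\dots,p+m+k\}$, only $k+1$ of them, so $\mult_{A(c)}(\mu)=k+1$ forces $\beta_{p+m}=\mu$. But $\beta_{p+m}$ cannot equal both $\lambda$ and $\mu$, and this contradiction completes the argument. (Lemma \ref{firstlast} guarantees $\lambda$ and $\mu$ are interior eigenvalues, which keeps the boundary bookkeeping honest, although the crucial index $p+m$ is valid in any case.)

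I expect the conceptual crux, rather than any computation, to lie in the first paragraph: recognizing that the ``at least three components'' clause of Theorem \ref{PWthm} pins both Parter vertices to the unique high-degree vertex of the generalized star, so that a single principal submatrix $A(c)$ must witness the multiplicity jump for both eigenvalues at once. Once that is in hand, the interlacing step is essentially the observation that two consecutive eigenvalues share a single interlacing gap $[\alpha_{p+m},\alpha_{p+m+1}]=[\lambda,\mu]$, and deleting one row and column cannot push the interlaced value $\beta_{p+m}$ down to $\lambda$ and up to $\mu$ simultaneously. The only point requiring care is the exact counting of which $\beta_i$ can attain $\lambda$ or $\mu$, namely verifying that a jump of $+1$ in multiplicity forces the boundary interlacing values to collapse onto the eigenvalue.
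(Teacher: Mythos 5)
Your proof is correct, and it follows exactly the route the paper relies on: the paper states Lemma \ref{collision} without proof as a well-known consequence of the Parter--Wiener Theorem and interlacing (citing \cite{JLS03}), and your argument—pinning both Parter vertices to the unique vertex of degree at least three and then deriving the contradiction $\beta_{p+m}=\lambda$ and $\beta_{p+m}=\mu$ from Cauchy interlacing—is precisely that standard argument. The index counting is sound, and the boundary cases ($\lambda$ minimal or $\mu$ maximal) only make the contradiction arrive sooner, as you note.
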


\section{Inverse eigenvalue problem for graphs of order at most five}\label{sord5}

In this section we solve the IEP for all graphs of order at most five, both in general and with the stipulation that there is a matrix realizing each possible spectrum that has the SSP (or equivalently, the SMP).  The solution to the IEP for graphs of order at most three is well known and straightforward, and the IEP for graphs of order 4 was solved in \cite{BNSY14}.  In Section \ref{ss4} we briefly re-derive the solution for order 4 by using the SSP to shorten the arguments.  We then solve the IEP for graphs of order 5 in Section \ref{ss5}. 

An ordered multiplicity list $\oml=(m_1,\dots,m_k)$ is {\em spectrally arbitrary} for graph $G$ if any set of $k$ real numbers $\lambda_1<\dots < \lambda_k$ can be realized as the spectrum of $A\in\mptn(G)$ with $\mult_A(\lambda_i)=m_i$  
(and $\oml$   is spectrally arbitrary for $G$ with the SSP has the obvious interpretation).  Figure \ref{fig:diagram} summarizes the solution for connected graphs of order at most 5, both with the SSP and without the SSP.  Note that some graph names are abbreviated: Banner (Bnr), Butterfly (Bfly), Diamond (Dmnd), Full House (FHs), House (Hs).  We show that for a connected graph of order  at most 4, every ordered multiplicity list that is attainable by the graph is spectrally arbitrary with the SSP.  For order 5 every ordered multiplicity list that is attainable (respectively, attainable with the SSP) is spectrally arbitrary (spectrally arbitrary with the SSP), but there are connected graphs and ordered multiplicity lists that can be realized only without the SSP.  In all cases disconnected graphs have some ordered multiplicity lists that can be realized only without the SSP.
While it is not true that all trees are spectrally arbitrary \cite{BF04}, we show that it is true for all graphs of order at most five.

  \begin{figure}[!h]
\begin{center}
\scalebox{.85}{\includegraphics{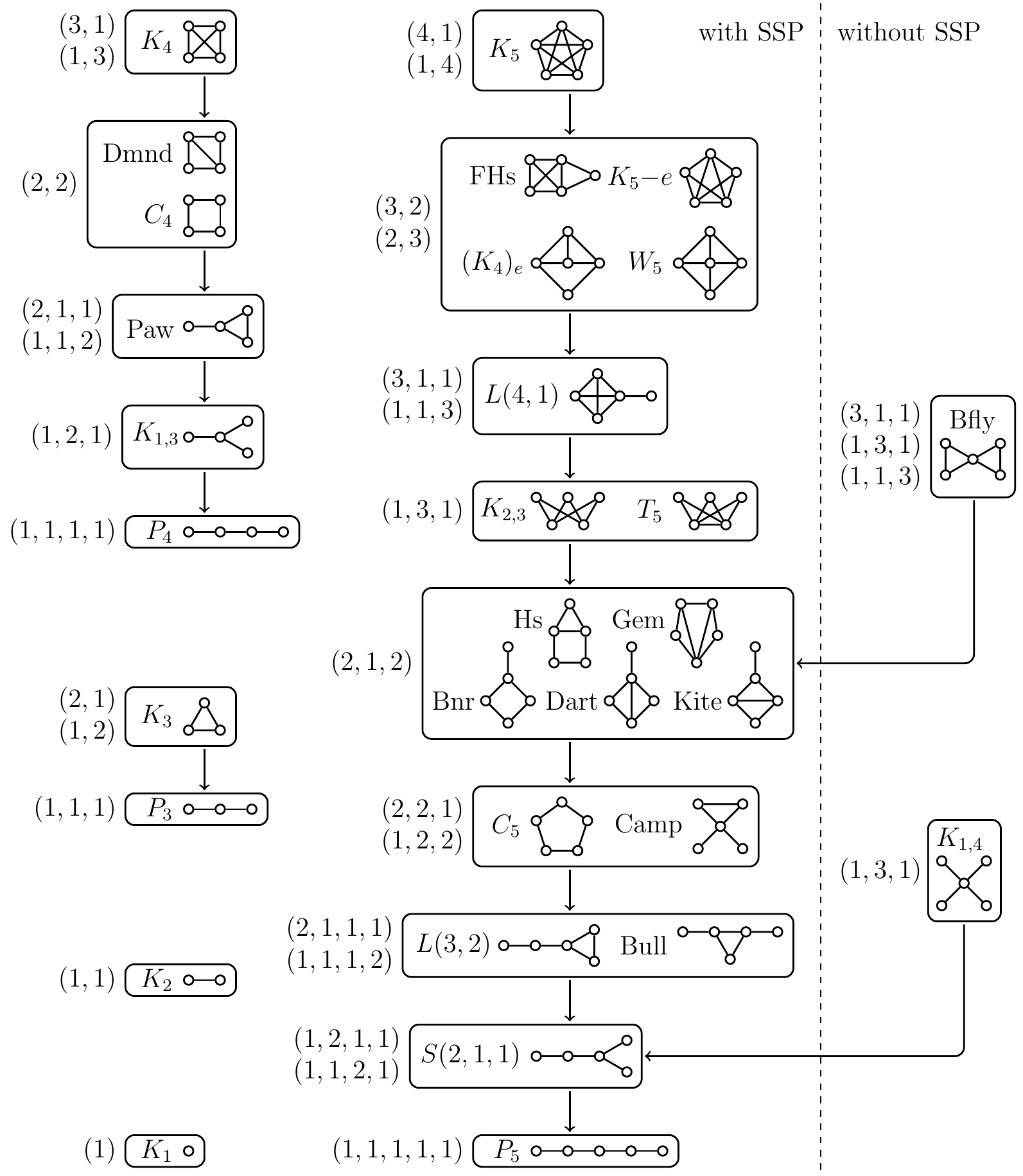}}
\caption{The connected graphs of order at most 5 with their ordered multiplicity lists.  If a box is joined to another box by a line  then the graphs in the upper box can realize every ordered multiplicity list of the graphs in the lower box (including other boxes below connected with lines to lower boxes). Every ordered multiplicity list is spectrally arbitrary for the graphs that attain it.}\label{fig:diagram}\vspace{-15pt}
\end{center}
\end{figure}

\begin{rem}\label{discon}  Spectra for disconnected graphs can be determined from those of connected graphs by allowing  any permissible assignment of disjoint spectra  to the connected components when SSP is required \cite[Theorem~34]{genSAP}, and when SSP is not required  any permissible assignment of spectra  to the connected components.  Thus the solutions to the inverse eigenvalue problem with and without SSP are different.
\end{rem}


\begin{rem}\label{2evals}Given a matrix $A$ that has exactly two distinct eigenvalues $\lam_1<\lam_2$ with multiplicities $m_1$ and $m_2$, and any possible pair of  real numbers $\mu_1<\mu_2$, the matrix \[B=\frac{\mu_2-\mu_1}{\lam_2-\lam_1}(A-\lam_1I)+\mu_1I\] has eigenvalues $\mu_1$ with multiplicity $m_1$ and $\mu_2$ with multiplicity $m_2$.  This technique is referred to as {\em scale and shift}.  By negation of the matrix, the order of multiplicities can be reversed.  Negation and scale and shift all preserve the SSP, meaning that if $A$ has the SSP, then so does $\alpha A+\beta I$ for any nonzero real number $\alpha$ and any $\beta\in\R$.  Thus for an ordered multiplicity list with only two multiplicities, exhibiting one matrix (respectively, one matrix with the SSP) with this ordered multiplicity list suffices to show the graph is spectrally arbitrary (respectively, spectrally arbitrary with the SSP) for this ordered multiply list.
\end{rem}

For every graph of order $n$, any set of $n$ distinct real numbers is attained by a matrix with the SSP \cite[Remark~15]{genSAP}.  Thus  $(1,1,\dots,1)$ is spectrally arbitrary with SSP for every graph.  Only distinct eigenvalues are possible  for a path.  
This covers all connected graphs of order at most 3 except $K_3$.  Any $A\in\mptn(K_3)$ has the SSP, and  $J_3$ has ordered multiplicity list $(2,1)$. The list (3) cannot be attained by a  connected graph.

If $G$ is a subgraph of $H$ on the same vertex set, then any spectrum attained by $G$ with SSP  is also attained by $H$ with SSP \cite[Theorem~10]{genSAP}.  Thus it is useful to identify minimal graphs attaining a given ordered multiplicity list and show that these are spectrally arbitrary.  These minimal subgraphs need not be connected.

We state an additional result that will be used.

\begin{lem}\label{firstlast2} Suppose $G$ is a connected unicyclic graph with an odd cycle.  At least one of the first and last eigenvalues of $G$ must be simple.
\end{lem}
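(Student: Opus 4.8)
The plan is to reduce the statement to the Perron--Frobenius theorem by means of a signature similarity, exploiting the fact that an \emph{odd} cycle forces $A$ and $-A$ to have opposite cycle balance. Fix $A\in\mptn(G)$ and let $C$ be the unique cycle of $G$, with edges $i_1i_2,i_2i_3,\dots,i_\ell i_1$ where $\ell$ is odd; set $\pi=a_{i_1i_2}a_{i_2i_3}\cdots a_{i_\ell i_1}$, which is nonzero because each $i_ti_{t+1}$ is an edge of $G$ and hence a nonzero entry. The first step is to record the observation that, since $G$ is connected and unicyclic, its cycle space is one-dimensional and spanned by $C$; consequently there is a signature matrix $D=\diag(d_1,\dots,d_n)$ with each $d_k\in\{\pm1\}$ making every off-diagonal entry of $DAD$ positive \emph{if and only if} $\pi>0$. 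I would justify this by choosing the $d_k$ along a spanning tree of $G$ (which fixes the signs on all tree edges up to a global sign) and noting that the single remaining cycle-closing edge is consistent exactly when the product of signs around $C$ is positive.

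Next I would split into two cases according to the sign of $\pi$. If $\pi>0$, pick such a $D$ and set $B=DAD$, which is similar to $A$ and so has the same spectrum. Choosing $c>0$ large enough that $B+cI$ has all entries nonnegative produces a nonnegative matrix whose graph is the connected graph $G$, hence an irreducible matrix; the Perron--Frobenius theorem then gives that its spectral radius is a simple eigenvalue, and since $B+cI$ is symmetric and nonnegative this spectral radius is its largest eigenvalue. Undoing the shift and the similarity shows the last eigenvalue of $A$ is simple. If instead $\pi<0$, I would apply the cyclic product to $-A$: replacing each $a_{i_ti_{t+1}}$ by its negative multiplies $\pi$ by $(-1)^\ell=-1$ since $\ell$ is odd, so the corresponding product for $-A$ is $-\pi>0$. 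As $-A\in\mptn(G)$ as well, the previous case applies to $-A$ and shows its largest eigenvalue $-\lam_{\min}(A)$ is simple, i.e., the first eigenvalue of $A$ is simple. Either way at least one of the first and last eigenvalues of $A$ is simple.

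The step I expect to require the most care is the equivalence between the existence of a sign-fixing signature matrix and the positivity of $\pi$, since this is precisely where connectedness and unicyclicity are used: a one-dimensional cycle space means that balancing the single cycle $C$ balances the whole signed graph, so no further cycles can obstruct the construction. The oddness of $\ell$ is what drives the dichotomy, placing $A$ and $-A$ in opposite cases so that one of the two is always balanced; this is exactly what fails for trees and even cycles, where $A$ and $-A$ share the same balance and both extremes (rather than just one) may instead be controlled.
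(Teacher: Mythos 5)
Your proof is correct and follows essentially the same route as the paper: reduce to positive cycle product by replacing $A$ with $-A$ (possible precisely because the cycle is odd), apply a signature similarity built along a spanning tree to make all off-diagonal entries nonnegative, and invoke Perron--Frobenius to get simplicity of the largest eigenvalue. If anything, your version is slightly more careful than the paper's: the explicit shift by $cI$ handles possibly negative diagonal entries, which the paper glosses over, and you correctly note that irreducibility already suffices where the paper invokes primitivity.
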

\bpf Let $A\in\mptn(G)$.  If the cycle product (product of the entries of $A$ on the cycle) is negative, then replace $A$ by $-A$; since the cycle is odd, the cycle product is now positive.  Let $G'$ be obtained from $G$ by deleting one edge $\{i,j\}$ of the  cycle, so $G'$ is a spanning tree of $G$.  Let $A'$ be defined from $A$ by replacing (the equal) entries $a_{ij}$ and $a_{ji}$ by 0, so $A'\in\mptn(G')$.  There exists a diagonal matrix $D$ with diagonal entries $\pm 1$  so that $DA'D=DA'D^{-1}$ is a nonnegative matrix \cite[Lemma 1.2]{DHHHW}. Since the cycle product of $A$ is unchanged by a diagonal similarity, the cycle product of $DAD$ is positive, implying $DAD$ is nonnegative.
Since $DAD$ is symmetric, it has 2-cycles and it has an odd cycle, so $DAD$ is primitive. The spectral radius of a primitive nonnegative matrix is  simple, so the last eigenvalue of $A$ is simple.
\epf
 \vspace{-10pt}


\subsection{Order 4}\label{ss4}

The next result describes the solution to the IEP for connected graphs of order 4; as noted in Remark \ref{discon},  the solution for orders 1, 2, and 3 suffice to solve the IEP for disconnected graphs of order 4.

\begin{prop}\label{prop4IEP} $\null$
\ben
\item Table \ref{OML4tab}
lists all the minimal subgraphs with respect to the SSP for each ordered multiplicity list  of order $4$. 
In each case the reversal of the given list has the same minimal subgraphs. 
\item The order $4$ part of Figure \ref{fig:diagram}
lists  all the possible ordered multiplicity lists for each  connected graph of order $4$ (both those ordered multiplicity lists next to the box with  the graph and all those next to graphs below and connected by a sequence of lines  in the order $4$ diagram).  Each ordered multiplicity list in the order $4$ part of  Figure \ref{fig:diagram} is spectrally arbitrary with the SSP.   \een \end{prop}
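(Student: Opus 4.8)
The plan is to prove both parts at once, since they encode the same data. By subgraph monotonicity \cite[Theorem~10]{genSAP}, the collection of order-$4$ graphs that attain a fixed ordered multiplicity list with the SSP is closed upward under adding edges, so it is determined by its minimal elements; part (1) records these minimal elements, and reading off, for each of the six connected graphs, every list whose minimal elements it contains as a spanning subgraph yields part (2). Since there are only eleven graphs on four vertices, every claim is a finite verification. I would first clear away the reversal symmetry and the routine lists: negation $A\mapsto -A$ fixes $\G(A)$ and the SSP while reversing the spectrum (Remark \ref{2evals}), so a list and its reversal have identical minimal subgraphs and I may work up to reversal; the list $(1,1,1,1)$ is spectrally arbitrary with the SSP for every graph \cite[Remark~15]{genSAP}; and for the two-part lists $(2,2)$ and $(3,1)$ scale and shift (Remark \ref{2evals}) reduces spectral arbitrariness to producing a single SSP matrix. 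This leaves the three-part lists $(2,1,1)$ and $(1,2,1)$ as the only cases needing a genuine parametrized family.

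For realizability I would exhibit, for each minimal subgraph, an explicit matrix with the prescribed list and check the SSP straight from the definition. For $(1,2,1)$ on $K_{1,3}$, setting the three leaf diagonal entries equal to the intended double eigenvalue produces a two-dimensional eigenspace, and the remaining pair of eigenvalues can be made any $\lambda_1<\lambda_2<\lambda_3$; here the equation $[A,X]=O$ for a symmetric $X$ vanishing on the diagonal and on the edge positions forces a $3\times3$ homogeneous system whose determinant is a nonzero scalar multiple of the product of the three edge weights, so $X=O$ and the SSP holds for every member of the family. The disconnected minimal subgraphs, such as $K_3\cup K_1$ (which realizes both $(2,1,1)$ and $(1,2,1)$), are handled using that any matrix in $\mptn(K_3)$ has the SSP together with the direct-sum criterion, namely that $A_1\oplus A_2$ has the SSP exactly when each summand does and their spectra are disjoint \cite[Theorem~34]{genSAP}; choosing the isolated-vertex value off the triangle's spectrum gives spectral arbitrariness with the SSP. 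Finally, subgraph monotonicity preserves the exact spectrum, so pushing each realization up to its spanning supergraphs transfers spectral arbitrariness with the SSP to every connected graph displayed below it in Figure \ref{fig:diagram}.

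For the converse, that no further lists occur, I would use the maximum-nullity bound and the Parter--Wiener consequences. A part equal to $4$ forces a scalar matrix, hence the empty graph, excluding $(4)$ for connected graphs; a part at least $3$ forces $\M(G)\ge3$, i.e.\ $\mr(G)\le1$, which among connected order-$4$ graphs singles out $K_4$ and gives $(3,1)$, $(1,3)$ only there. For $P_4$ and $K_{1,3}$ (both generalized stars) Lemma \ref{firstlast} removes $(2,1,1)$ and $(1,1,2)$ and Lemma \ref{collision} removes $(2,2)$, while $\M(P_4)=1$ and $\M(K_{1,3})=2$ finish the trees. For the paw, which is unicyclic with a triangle, Lemma \ref{firstlast2} forces an extreme eigenvalue to be simple and so removes $(2,2)$, leaving exactly the four lists realized above; the even cycle $C_4$ and the diamond carry no tree or odd-cycle restriction, so $\M=2$ already limits them to the five realized lists.

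The step I expect to demand the most care is the completeness and minimality bookkeeping underlying part (1): one must confirm, by running through all eleven graphs on four vertices, that the tabulated subgraphs are genuinely minimal (no proper spanning subgraph lies in the same up-set) and that there are no further minimal graphs. The subtlety lives entirely in the disconnected candidates, where the direct-sum SSP criterion is decisive: for instance $P_3\cup K_1$ and $2K_2$ can realize $(1,2,1)$ or $(2,1,1)$ as multisets of eigenvalues but fail the SSP, because the forced coincidence of an eigenvalue across two components violates the disjoint-spectra condition, which is exactly why $K_3\cup K_1$, and not these, appears in the table. Verifying this disjointness obstruction in each borderline case, rather than any single SSP computation, is the crux.
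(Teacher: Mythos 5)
Your overall architecture --- minimal SSP realizations pushed upward by subgraph monotonicity, combined with maximum-nullity and Parter--Wiener-type exclusions --- is the same as the paper's, and your exclusion arguments are sound (using Lemma \ref{firstlast2} for the paw is a harmless variant of the paper's appeal to $q(\mathrm{Paw})=3$). Your $K_{1,3}$ family and the $K_3\dcup K_1$ direct-sum argument are also correct.

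However, there is a genuine gap: you never establish that $C_4$ attains $(1,2,1)$ and $(2,1,1)$ spectrally arbitrarily with the SSP. This cannot be inherited from anything else in your argument, since $C_4$ contains neither $K_{1,3}$ nor $K_3\dcup K_1$ as a spanning subgraph; so $C_4$ is itself a minimal graph for these two lists (as Table \ref{OML4tab} records), and its attainment must be settled independently. Without it, part (1) is incomplete --- the completeness claim for the rows $(1,2,1)$ and $(2,1,1)$ is unverified --- and part (2) fails for $C_4$, whose box in Figure \ref{fig:diagram} includes exactly these lists. Moreover, this is the one case where your promise to ``exhibit an explicit matrix and check the SSP from the definition'' is not routine: the obvious candidate, the adjacency matrix of $C_4$ (spectrum $\{-2,0,0,2\}$, list $(1,2,1)$), does \emph{not} have the SSP, since with the cycle labeled $1\hbox{-}2\hbox{-}3\hbox{-}4\hbox{-}1$ the symmetric matrix $X$ with $x_{13}=x_{31}=x_{24}=x_{42}=1$ and all other entries zero satisfies $A\circ X=O$, $I\circ X=O$, and $[A,X]=O$. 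The paper handles precisely this case with Corollary \ref{Cn211}, i.e., the Augmentation Lemma (Lemma \ref{shaun}) applied to an SSP matrix in $\mptn(P_3)$ whose eigenvector for the chosen eigenvalue is supported on both endpoints; your proof needs either that machinery or an explicitly constructed and verified SSP family in $\mptn(C_4)$, and as written it supplies neither.
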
\vspace{-10pt}

\begin{table}[h!]\caption{Minimal order 4 subgraphs for attainment  with SSP, by ordered multiplicity list (OML)}\vspace{-10pt}\label{OML4tab}
\begin{center} 
\noindent {\small \begin{tabular}{|c|cc|cc|cc|}
\hline 
OML & graph  & reason & graph & reason & graph & reason \\
\hline 
(3,1) & $K_4$ & $J_4$ &  &   && \\
\hline 
(2,2) & $C_4$ & $\mtx{0 & 1 & 0 & -1\\1 & 0 & 1 & 0\\ 0 & 1 & 0 & 1\\-1 & 0 & 1 &0 }$ &  &    &&\\
\hline 
(1,2,1) & $C_4$ & Corollary \ref{Cn211} 
 &{$K_3\dcup K_1$} &  {\normalsize $\left(\frac{\lam_3-\lam_2}3J_3+\lam_2I_3\right) \oplus [\lam_1
]$} & $K_{1,3}$ &  $\mtx{a & b & b & b \\
 b & 0 & 0 & 0 \\
 b & 0 & 0 & 0 \\
 b & 0 & 0 & 0}$   \\
\hline 
(2,1,1) & $C_4$ & Corollary \ref{Cn211}& {$K_3\dcup K_1$} & {\normalsize  $\left(\frac{\lam_2-\lam_1}3J_3+\lam_1I_3\right) \oplus [\lam_3]$} & & \\[1mm]
\hline 
(1,1,1,1) & $4K_1$ & \cite[Remark~15]{genSAP} &  & & &   \\
\hline 
\end{tabular}}\vspace{-15pt}
\end{center}
\end{table}

\bpf
In each case a matrix or reason  is listed in  Table \ref{OML4tab}.  It is straightforward to verify that  each matrix has the SSP regardless of the parameters, and to see that the matrix for  $K_{1,3}$ for (1,2,1) is spectrally arbitrary by choice of parameters (and scale and shift); see \cite{suppdocs} for verifications of the SSP. Thus each ordered multiplicity list in Table \ref{OML4tab} is spectrally arbitrary with the SSP for the listed graph. 

It is then straightforward to verify that every ordered multiplicity list in the order 4 part of Figure \ref{fig:diagram} is spectrally arbitrary with SSP for the graphs in  its box or above it, by use of a subgraph with the ordered multiplicity list that is spectrally arbitrary with SSP  (cf. Table \ref{OML4tab}).  

To complete the proof, we justify that  no other ordered multiplicity list is possible for each graph, which also justifies that all minimal subgraphs are listed in Table \ref{OML4tab}:  \vspace{-4pt}
\bit
\item For $K_4$, $\M(K_4)=3$. \vspace{-4pt}
\item For $C_4$ and Diamond, $\M(C_4)=2=\M({\sf Dmnd})$. \vspace{-4pt}
\item Paw has at least 3 distinct eigenvalues ($q($Paw$)=3$) by \cite[Theorem 3.2]{AACFMN13} since  there is a unique path of length 2 from the degree one vertex to either degree two vertex.   \vspace{-4pt}
\item For $K_{1,3}$, the first and last eigenvalues are simple (Lemma \ref{firstlast}) and $\M(K_{1,3})=2$.  \vspace{-4pt}
\item For $P_4$,  the maximum multiplicity is $\M(P_4)=1$.  \qedhere
\eit \epf


\subsection{Order 5}\label{ss5}

\begin{lem}\label{5matrixlemma} Let 
\[  M_{1}={\scriptsize
\mtx{-t^4+2 t^3-t^2-1 & 0 & -(t-1) t & (t-1)^2 t^2 & 0 \\
 0 & -t^4+2 t^3-t^2-1 & 0 & -(t-1) t^2 & -(t-1) t \\
 -(t-1) t & 0 & -t^2 \left(t^2-2 t+2\right) & 0 & -(t-1) t^2 \\
 (t-1)^2 t^2 & -(t-1) t^2 & 0 & -t^2 \left(t^2-2 t+2\right) & 0 \\
 0 & -(t-1) t & -(t-1) t^2 & 0 & -2 (t-1)^2 t^2 \\
}\!\!, \, 0<t<1,} \vspace{-3pt}\]
\[M_{2}= \mtx{-1 & 1 & -a & 0 & 0 \\
 1 & -1 & -a & 0 & 0 \\
 -a & -a & 2 a^2-2 & -a & -a \\
 0 & 0 & -a & 0 & 0 \\
 0 & 0 & -a & 0 & 0 }\!\!, \,a\ne 0,\qquad 
 M_{3}=\mtx{ 1 & -1 & 0 & 0 & -a \\
 -1 & 1 & 0 & 0 & -a \\
 0 & 0 & -a^2 & a^2 & a \\
 0 & 0 & a^2 & -a^2 & a \\
 -a & -a & a & a & 2-2 a^2 }\!\!, \,a\ne 0,\vspace{-3pt}\]
\[M_{4}= \mtx{a & 0 & b & b & b \\
 0 & -a c^2 & b c & b c & b c \\
 b & b c & 0 & 0 & 0 \\
 b & b c & 0 & 0 & 0 \\
 b & b c & 0 & 0 & 0}\!\!, \,b\ne 0,c\ne 0,\pm 1,\qquad M_{5}=\mtx{  a^2 & 0 & \sqrt{2} a & a & a \\
 0 & 1 & -\sqrt{2} & 1 & 1 \\
 \sqrt{2} a & -\sqrt{2} & 4 & 0 & 0 \\
 a & 1 & 0 & 2 & 2 \\
 a & 1 & 0 & 2 & 2 }\!\!, \,a\ge 1.\vspace{-3pt}\]
\ben[(1)]
\item $M_i, i=1,2,3,4,5$ has the SSP for any permitted parameters.
\item $\G(M_1)=C_5$, $\G(M_2)= {\sf Camp}$, $\G(M_3)={\sf Bfly}$, $\G(M_4)=K_{2,3}$, and $\G(M_5)=(K_4)_e$.
\item\label{C5} $\oml(M_{1})=(2,2,1)$, and $\lam_1<\lam_2<\lam_3$ can be realized  with $M_{1}$ by choosing $t$ and scale and shift.
\item\label{G4} $\oml(M_2)=(2,2,1)$, and $\lam_1<\lam_2<\lam_3$ can be realized  with $M_{2}$ by choosing $a$ and scale and shift.
\item\label{G11} $\oml(M_3)=(2,1,2)$, and $\lam_1<\lam_2<\lam_3$ can be realized  with $M_3$ by choosing $a$ and scale and shift.
\item\label{K23} $\oml(M_4)=(1,3,1)$, and $\lam_1<\lam_2<\lam_3$ can be realized  with $M_4$ by choosing $a,b,c$ and scale and shift.
\item\label{G17} $\oml(M_5)=(3,1,1)$ for $a> 1$, and $\lam_1<\lam_2<\lam_3$ can be realized  by $M_5$ by choosing $a>1$ and scale and shift.  $\oml(M_5)=(3,2)$ for $a= 1$, and $\lam_1<\lam_2$ can be realized  by $M_5$ with $a=1$ by  scale and shift.
\een
\end{lem}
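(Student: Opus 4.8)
The plan is to treat this as a verification lemma for five explicit parametrized matrix families, organizing the work so that the genuinely interesting content---the realizability of arbitrary spectra---is isolated from routine computation. I would dispatch part (2) first, simply by reading off the support of each $M_i$: for example, in $M_1$ the nonzero off-diagonal entries occur in positions $13,14,24,25,35$, which gives each vertex degree two and forms the single cycle $1\,3\,5\,2\,4\,1$, so $\G(M_1)=C_5$; the remaining four graphs are identified the same way, using the parameter restrictions (e.g. $0<t<1$, $b\ne 0$, $c\ne 0,\pm1$) precisely to guarantee that the intended entries are genuinely nonzero.

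Part (1) is the defining computation for the SSP. For each family I would set $X=[x_{ij}]$ to be the generic symmetric matrix with $x_{ii}=0$ and $x_{ij}=0$ whenever $(M_i)_{ij}\ne0$, so that the free entries of $X$ are indexed by the non-edges of $\G(M_i)$, and then impose $[M_i,X]=O$. This yields a homogeneous linear system in those few free entries whose coefficients are polynomials in the parameters, and the task is to show it forces $X=O$ for every permitted parameter value. I expect this to be a finite Gaussian elimination carried out symbolically and recorded in the supplement \cite{suppdocs}; the only care needed is to confirm that no pivot vanishes on the allowed parameter range, which is again where the restrictions on the parameters are used.

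For the multiplicity lists in (3)--(7) I would exploit the structure already visible in the matrices rather than expand a generic quintic. Each family carries duplicated rows and columns (twin vertices) or an involutive symmetry: rows $4$ and $5$ of $M_2$ and $M_4$ coincide, columns $3,4,5$ of $M_4$ coincide, and $M_2,M_3$ are invariant under the permutation swapping each twin pair. Block-diagonalizing along the symmetric/antisymmetric decomposition, or directly exhibiting kernel vectors such as $\be_4-\be_5$ and eigenvectors such as $\be_1-\be_2$, splits the characteristic polynomial into small factors, and the claimed lists $(2,2,1)$, $(2,1,2)$, $(1,3,1)$, $(3,1,1)$ then follow after computing the eigenvalues of a $2\times2$ or $3\times3$ reduced block. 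The one point requiring attention is the degeneration in (7): I would verify that the two simple eigenvalues of $M_5$ (for $a>1$) merge as $a\to1^+$, turning $(3,1,1)$ into $(3,2)$, which also pins down exactly which gap closes.

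The heart of the lemma is the realizability of an arbitrary ordered triple $\lam_1<\lam_2<\lam_3$, and this is where I expect the real work. The two-distinct-eigenvalue case ($M_5$ with $a=1$, list $(3,2)$) is immediate from scale and shift (Remark \ref{2evals}), which also preserves the SSP. For each three-distinct case the family $\alpha M_i+\beta I$ supplies two degrees of freedom but preserves the gap ratio $r=(\lam_2-\lam_1)/(\lam_3-\lam_1)$, so one must first tune the remaining scalar parameter to match the target ratio $r^\ast=(\lam_2-\lam_1)/(\lam_3-\lam_1)$ computed from the desired triple (a value in $(0,1)$), and only then scale and shift; since part~(1) keeps the SSP throughout the parameter range, the resulting matrix has the SSP as well. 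Thus the crux is to show that, as the parameter ranges over its permitted interval, $r$ is a continuous surjection onto $(0,1)$. Continuity is clear because on the open range the ordered multiplicity list is constant, so $\lam_1,\lam_2,\lam_3$ are continuous (indeed analytic) and stay distinct; surjectivity I would establish by the intermediate value theorem, computing the limits of $r$ at the two ends of the parameter interval and checking they bracket all of $(0,1)$---for $M_5$ one endpoint is $a\to1^+$, where the coalescence found above forces $r\to1$, and I would check the opposite extreme at the other endpoint. This endpoint-limit analysis, together with confirming that $r$ covers the interval (via monotonicity or an explicit expression for the two non-repeated eigenvalues), is the step I expect to be the main obstacle; everything else is bookkeeping. \epf
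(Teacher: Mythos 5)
Your proposal is correct and follows essentially the same route as the paper: the SSP and the graphs are handled by direct (computer-assisted) verification deferred to the supplement, the spectra are computed explicitly, and realizability is reduced to matching the gap ratio by tuning the free parameter (continuity plus endpoint limits, i.e.\ the intermediate value theorem) followed by scale and shift---exactly the argument the paper gives for $M_1$. The only cosmetic differences are that the paper writes the spectra of $M_2$, $M_3$, $M_5$ in closed form (making the ratio surjectivity immediate rather than an IVT matter), and for $M_4$ it solves for $a,b$ explicitly in terms of the two target simple eigenvalues with $c$ chosen sufficiently small, rather than the two-stage parameter tuning your sketch would need there.
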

\bpf  In each case it is clear that $M_i$ has the stated graph, and it is straightforward to verify the SSP for each matrix (see \cite{suppdocs}). 

For statements \eqref{G4}, \eqref{G11}, and \eqref{G17}, examination of the eigenvalues shows that the claimed ordered multiplicity list is realized, and any spectrum can be realized by appropriate choice of the parameter and scale and shift; here we list the spectra:\vspace{-6pt}
\bea \spec(M_2)=\{-2, -2, 0, 0, 2 a^2\}\\
\spec(M_3)=\{ -2 a^2, -2 a^2,0,2, 2\}\\
\spec(M_5)=\{ 0, 0, 0, 5, 4 + a^2\}\vspace{-8pt} \eea

Computations show that $\spec(M_1)=\{\lam,\lam,\mu,\mu,0\}$, with \vspace{-6pt}
\bea&\lam &= 
\frac{1}{2} \left(-3 t^4+6 t^3-4 t^2-1-(1-t) \sqrt{t^6-2 t^5+3 t^4+3 t^2+2 t+1}\right)\\
   < &\mu&= 
   \frac{1}{2} \left(-3 t^4+6 t^3-4 t^2-1+(1-t) \sqrt{t^6-2 t^5+3 t^4+3 t^2+2 t+1}\right)\\
   < &0, &\vspace{-10pt}\eea
with the last inequality following from $\lam<0$ and Lemma \ref{firstlast2}.  Note that $\lam$ and $\mu$ are continuous functions of $t$ with $\lim_{t\to0}\frac\mu{\lam}=0$ and $\lim_{t\to1}\frac\mu{\lam}=1$.  So given $\alpha_1<\alpha_2<0$, we can choose $t$ so that $\frac\mu{\lam}=\frac{\alpha_2}{\alpha_1}$.  Then scaling $M_1$ achieves eigenvalues 
$\{\alpha_1,\alpha_1,\alpha_2,\alpha_2,0\}$, and finally a shift is made if needed.

For the matrix $M_4$ with ordered multiplicity list (1,3,1), the characteristic polynomial is \vspace{-3pt}
\[p_{M_4}(x)=x^3 \left(-a^2 c^2 - 3 b^2 (1 + c^2) + a (-1 + c^2) x + x^2\right).\vspace{-3pt}\]  Any  eigenvalues $\lam$ and $\mu$ of opposite sign can be realized by 
\[a=  \frac{-\lam-\mu }{c^2-1},\
b=
   \frac{\sqrt{-c^4 \lam \mu -c^2 \lam^2-c^2 \mu ^2-\lam \mu  }}
   {\sqrt{3} \sqrt{c^6-c^4-c^2+1}}.\] The only restrictions on the parameters are $b\ne 0, c\ne 0, c\ne \pm1$, $-c^4 \lam \mu -c^2 \lam^2-c^2 \mu ^2-\lam \mu> 0$, and $c^6-c^4-c^2+1> 0$. Since $-\lambda  \mu>0$, all the inequalities can be ensured by choosing $c$  sufficiently small.
\epf\vspace{-10pt}

\begin{thm}\label{thm5IEP} $\null$
\ben
\item Table \ref{OML5tab} lists all the minimal subgraphs with respect to the SSP for each ordered multiplicity list  of order $5$.  
In each case the reversal of the given list has the same minimal subgraphs.
\item 
The order 5 part of Figure \ref{fig:diagram}
lists  all the possible ordered multiplicity lists for each  connected graph of order 5 (both those ordered multiplicity lists next to the box with  the graph and all those next to graphs below and connected by a sequence of lines  in the order 5 diagram).  Each ordered multiplicity list in Figure \ref{fig:diagram} is spectrally arbitrary.  Those to the left of the dashed vertical  line can  realized with the SSP, whereas those to the right cannot be realized with the SMP (and thus not with the SSP).  
\een  \end{thm}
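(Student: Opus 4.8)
The plan is to mirror the order-4 argument of Proposition \ref{prop4IEP}, splitting the work into a constructive (realizability) direction and an impossibility direction, and organizing both by ordered multiplicity list and by graph. The constructive toolkit is: the explicit matrices of Lemma \ref{5matrixlemma}, which realize the ``two-cluster'' lists $(2,2,1)$, $(2,1,2)$, $(1,3,1)$, $(3,1,1)$, and $(3,2)$ with the SSP on the minimal graphs $C_5$, {\sf Camp}, {\sf Bfly}, $K_{2,3}$, $(K_4)_e$; the list $(4,1)$, realized with the SSP by a scaled $J_5$ on $K_5$; the all-distinct list $(1,1,1,1,1)$, realizable with the SSP by every graph \cite[Remark~15]{genSAP}; subgraph monotonicity \cite[Theorem~10]{genSAP}, which pushes any SSP realization up to every supergraph on the same vertex set; and scale and shift (Remark \ref{2evals}), which for a list with only two multiplicities reduces spectral arbitrariness to exhibiting a single matrix. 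The impossibility toolkit is: the maximum multiplicity $\M(G)$ and the minimum number of distinct eigenvalues $q(G)$ (bounded below by unique--shortest--path arguments \cite[Theorem~3.2]{AACFMN13}), together with the Parter--Wiener Theorem \ref{PWthm} and Lemmas \ref{firstlast}, \ref{collision}, and \ref{firstlast2} for trees and odd-unicyclic graphs.

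For part (1) and the left side of the dashed line in part (2), I would first populate Table \ref{OML5tab} by assigning to each OML its minimal SSP subgraph(s) from the list above, taking direct sums of smaller SSP matrices when the minimal subgraph is disconnected (as in the order-4 table), and verifying minimality by checking that no proper subgraph attains the list. The reversal-invariance of the table follows from negation, which preserves the SSP and reverses the order of the multiplicities (Remark \ref{2evals}). Spectral arbitrariness with the SSP then follows for each listed graph: for the two-multiplicity lists from Remark \ref{2evals}, and for the genuinely three-part lists $(2,2,1)$, $(2,1,2)$, $(1,3,1)$ from the parameter-plus-scale-and-shift arguments already carried out in Lemma \ref{5matrixlemma}. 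Propagating these realizations upward by \cite[Theorem~10]{genSAP} shows every OML attached to a box (or to a box below it along a chain of lines) in the order-5 part of Figure \ref{fig:diagram} is spectrally arbitrary with the SSP.

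To finish part (2) I must show the diagram lists \emph{all} attainable OMLs for each connected graph, i.e. rule out every other list. I would do this graph by graph: the largest multiplicity in any attainable list is at most $\M(G)$, the number of parts is at least $q(G)$, and a unique shortest path of length $\ell$ forces $q(G)\ge \ell+1$ \cite[Theorem~3.2]{AACFMN13}; for trees the first and last multiplicities are $1$ (Lemma \ref{firstlast}), generalized stars admit no consecutive multiple eigenvalues (Lemma \ref{collision}), and the Parter--Wiener Theorem \ref{PWthm} together with interlacing eliminates the remaining interior patterns; for connected unicyclic graphs with an odd cycle at least one end eigenvalue is simple (Lemma \ref{firstlast2}). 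Applied case by case, these constraints leave exactly the lists recorded in the diagram.

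The crux is the right side of the dashed line: the assertion that certain attainable OMLs cannot be realized with the SMP. Here I would use the implication, noted in Section \ref{sterm}, that the SMP forces $A+\lambda I$ to have the SAP for every real $\lambda$; taking $\lambda=-\lambda_i$ for each eigenvalue $\lambda_i$ of $A$ shows that an SMP matrix admits, at every eigenvalue, only the trivial symmetric $X$ satisfying $A\circ X=O$, $I\circ X=O$, $(A-\lambda_i I)X=O$. To prove a list is not SMP-attainable it therefore suffices to produce, for \emph{every} $A\in\mptn(G)$ with that list, a nonzero such $X$ at one of its eigenvalues---equivalently, to show the multiplicity in question exceeds the maximum nullity attainable on $G$ with the SAP (a Colin de Verdi\`ere--type quantity strictly smaller than $\M(G)$ for these graphs). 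The main obstacle is that this is a universal statement over a continuous family of matrices; I would reduce it to a finite check by normalizing each candidate $A$ up to diagonal $\pm1$ similarity, scale, and shift, and then verifying that the relevant SAP/SMP linear system in $X$ has a nontrivial kernel (the computations are recorded in \cite{suppdocs}). Finally, each such OML is still spectrally arbitrary \emph{without} the SSP: for the two-cluster lists this follows from one explicit non-SSP matrix via scale and shift (Remark \ref{2evals}), and for the remaining lists from an explicit parametrized family, completing the proof.
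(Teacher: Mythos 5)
Your overall architecture matches the paper's proof (Lemma \ref{5matrixlemma} plus subgraph monotonicity and scale-and-shift for realizability; graph-by-graph parameter bounds for impossibility), but three concrete pieces of your proposal either fail or fall short of what the theorem needs. First, and most seriously, your method for the right-hand side of the dashed line does not work as stated. You correctly reduce non-SMP-realizability to showing that the required multiplicity exceeds the maximum nullity over SAP matrices (the parameter $\xi$), but you then propose to verify this universal statement by ``normalizing each candidate $A$ up to diagonal $\pm1$ similarity, scale, and shift'' and running a finite kernel computation. That normalization removes only a bounded number of degrees of freedom from a positive-dimensional family: the matrices in $\mptn(K_{1,4})$ or $\mptn({\sf Bfly})$ with a given ordered multiplicity list still form a continuum after these reductions, so there is no finite check. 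The paper instead simply invokes the known values $\xi(K_{1,4})=2$ and $\xi({\sf Bfly})=2$ (citing \cite{AIM}, \cite{BFH3}, \cite{HLA2}); since the SMP implies $A+\lambda I$ has the SAP for every $\lambda$, a multiplicity-$3$ eigenvalue with the SMP would force $\xi\ge 3$, a contradiction. Your framework is right; your proposed verification of the key inequality is not.

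Second, your impossibility toolkit ($\M$, $q$, Parter--Wiener, Lemmas \ref{firstlast}, \ref{collision}, \ref{firstlast2}) cannot eliminate $(3,1,1)$ and $(3,2)$ for $K_{2,3}$ and $T_5$: these graphs have $\M=3$, are not trees, and are not odd-unicyclic, so none of your constraints applies. The paper needs the maximum positive semidefinite nullity here --- $\Mplus(K_{2,3})=\Mplus(T_5)=2$, whereas an extreme eigenvalue of multiplicity $3$ would make $A-\lambda_1 I$ (or $\lambda_q I - A$) positive semidefinite with nullity $3$. Third, your constructive toolkit omits Corollary \ref{Cn211} (the Augmentation Lemma application), which is the only listed means of realizing $(2,1,1,1)$ and $(1,2,1,1)$ on $C_5$ with the SSP: these lists are not covered by Lemma \ref{5matrixlemma}, scale-and-shift fails since they have four distinct eigenvalues, and $C_4\dcup K_1$ is not a subgraph of $C_5$ (a $5$-cycle contains no $4$-cycle), so subgraph monotonicity cannot transport the $C_4\dcup K_1$ realization. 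Without these two ingredients, both the completeness of Table \ref{OML5tab} in part (1) and the full attainability claims of part (2) are left unproved.
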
\vspace{-10pt}

\begin{table}[h!]\caption{Minimal order 5 subgraphs for attainment  with SSP, by ordered multiplicity  list (OML)}\vspace{-5pt}\label{OML5tab}
\begin{center} 
\noindent {\small \begin{tabular}{|c|cc|cc|cc|cc|}
\hline 
OML  & graph  & reason & graph & reason & graph & reason& graph & reason \\
\hline 
(4,1) & $K_5$ & $J_5$ &  &  & & & &  \\
\hline 
(3,2) & $(K_4)_e$ & $M_5, a=1$&  & & & & &     \\[0.3mm]
\hline 
(3,1,1) &$K_4\dcup K_1$& & $(K_4)_e$ & $M_5, a>1$   &  & &  &\\[0.3mm]
\hline 
(1,3,1) &$K_4\dcup K_1$& & $K_{2,3}$ & $M_4$  &&  & &\\[0.3mm]
\hline 
(2,1,2) &$C_4\dcup K_1$ &&  Butterfly & $M_3$ & &  & & \\[0.3mm]
\hline 
(2,2,1) &$C_4\dcup K_1$ &&  $C_5$ & $M_1$ &  Campstool  & $M_2$  & & \\[0.3mm]
\hline
(2,1,1,1) & $C_4\dcup K_1$ & 
& $C_5$  & Corollary \ref{Cn211} & $K_3\dcup 2K_1$ & & & \\
\hline 
(1,2,1,1) & $C_4\dcup K_1$ & 
& $C_5$  & Corollary \ref{Cn211} & $K_3\dcup 2K_1$ &  & $K_{1,3}\dcup K_1$ &  \\
\hline 
(1,1,1,1,1) & $5K_1$ & &  
& &  & &   &  \\
\hline \end{tabular}}\vspace{-15pt}
\end{center}
\end{table}



\bpf For each connected graph   in  Table \ref{OML5tab}, a matrix or reason  is listed.   That the matrices listed  have the given ordered multiplicity list  and can realize any such spectra was established in Lemma \ref{5matrixlemma}, and it is straightforward to verify that each of  the listed matrices has the SSP  (see \cite{suppdocs}).  For the disconnected graphs, the result follows from Proposition \ref{prop4IEP} and the Block Diagonal Theorem \cite[Theorem~34]{genSAP}.

The information in Table \ref{OML5tab} can be used to justify the following statement: Every ordered multiplicity list to the left of the dashed vertical line in the order 5 part of Figure \ref{fig:diagram}  is spectrally arbitrary with the SSP for the graphs in  its box or above it.  In some cases  a subgraph with the desired ordered multiplicity list that is spectrally arbitrary with the SSP  is used for the justification.  

To complete the proof, we show that for each graph no ordered multiplicity list other than those described in Figure \ref{fig:diagram} 
 is possible,  which also justifies that all minimal subgraphs are listed in Table \ref{OML5tab}, and discuss why the non-SSP ordered multiplicity lists cannot be realized with SMP. 
In each case below, the statements about  $\M$ (maximum multiplicity = maximum nullity), $\Mplus$ (maximum positive  semidefinite nullity) and $\xi$ (maximum nullity with SAP)  are well known (see, for example, \cite{AIM}, \cite{BFH3}, and \cite{HLA2}).  \vspace{-5pt}
\bit
\item
For $P_5$,  $\M(P_5)=1$. \vspace{-5pt}
\item
For $G=S(2,1,1)$, the first and last eigenvalues are simple since $G$ is a tree (Lemma \ref{firstlast}), and $\M(G)=2$. 
\vspace{-5pt}
\item
For $K_{1,4}$: The first and last eigenvalues are simple,  so the only possible ordered multiplicity list that has not already been shown to be realized with the SSP is   $(1,3,1)$. The adjacency matrix 
realizes $(1,3,1)$,   and it is known that any star is spectrally arbitrary for every ordered multiplicity list it attains \cite{BF04}.
    Since $\xi(K_{1,4})=2$ and the SMP implies the SAP, $(1,3,1)$ cannot be realized  with the SMP.  
\vspace{-5pt}
\item
For $G=L(3,2)$  or Bull,  $\M(G)=2$, $q(G)=4$, and one of the first and last eigenvalues is simple by Lemma \ref{firstlast2}. 
\vspace{-5pt}
\item
For $G=C_5$ or Campstool (Camp),  $\M(G)=2$ and one of the first and last eigenvalues is simple by Lemma \ref{firstlast2}. 
\vspace{-5pt}
\item
For $G$ one of the graphs Banner (Bnr), House (Hs), Dart, Gem, or Kite,  $\M(G)=2$. 
\vspace{-5pt}
\item
For $G=$ Butterfly (Bfly):  $\M(G)=3$ and $\xi(G)=2$, so no  ordered multiplicity list containing a 3 can be realized  with the SMP.  Without the SMP, it is known that (1,3,1) and (3,1,1) are spectrally arbitrary for Butterfly \cite[Section 5.2]{Kempton}.  \vspace{-5pt}
\item
For $G=K_{2,3}$ or $T_5$,  $\M(G)=3$ and $\Mplus(G)=2$; the latter eliminates $(3,1,1)$ and $(3,2)$ as possible ordered multiplicity lists.   
\vspace{-5pt}
\item
For $L(4,1)$, $\M(L(4,1))=3$ and $q(L(4,1))=3$ by unique shortest path of length 2. 
\vspace{-5pt}
\item
For $G$ one of Full House (FHs), $K_5-e$, $(K_4)_e$, or $W_5$, $\M(G)=3$.  \vspace{-5pt}
\item
For $K_5$, $\M(K_5)=4$.\qedhere
\eit\epf


\section{A graph and ordered multiplicity list that allow the SMP but not the SSP}\label{sSMPnotSSP}

In this section we exhibit a graph $G$ and ordered multiplicity list $\oml$ such if $A\in\mptn(G)$ and $\oml(A)=\oml$, then $A$ does not have the SSP, yet there exists $B\in\mptn(G)$ with $\oml(B)=\oml$ that has the SMP.   To establish that  for a given graph and ordered multiplicity list, no matrix can  have the SSP, we apply the next result.  

\begin{thm}{\rm \cite[Corollary~29(b)]{genSAP}} 
 \label{edgebdSSP}  
Suppose $G$ is a graph,  $A\in\mptn(G)$,  $\oml(A)=(m_1,\dots,m_q)$, and $A$ has the SSP.  Then   $|E(G)| \geq \sum_{j=1}^q {m_i \choose 2}$.
\end{thm}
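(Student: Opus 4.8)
The plan is to prove the bound by a dimension count in the space of real symmetric $n\times n$ matrices, which has dimension $\binom{n+1}{2}$. The SSP hypothesis is precisely the assertion that two naturally defined subspaces of this space meet only in $O$, and the edge inequality will drop out of comparing their dimensions. First I would set $\mathcal{Z}$ to be the set of symmetric matrices $X$ with $A\circ X=O$ and $I\circ X=O$; these are exactly the symmetric matrices whose only possibly nonzero entries sit in off-diagonal positions $\{i,j\}$ with $ij\notin E(G)$, so taking the basis $\{E_{ij}+E_{ji}\}$ over the non-edges gives $\dim\mathcal{Z}=\binom{n}{2}-|E(G)|$. Next I would set $\mathcal{C}$ to be the set of symmetric $X$ with $[A,X]=O$. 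Since $A$ is symmetric with distinct eigenvalues $\lam_1<\cdots<\lam_q$ and mutually orthogonal eigenspaces $V_1,\dots,V_q$ of dimensions $m_1,\dots,m_q$, any $X$ commuting with $A$ preserves each $V_j$ (if $v\in V_j$ then $AXv=XAv=\lam_j Xv$), and conversely any $X$ preserving each $V_j$ commutes with $A$; as $X$ is symmetric and the $V_j$ are orthogonal, such an $X$ is block diagonal with respect to $\R^n=V_1\oplus\cdots\oplus V_q$ with an arbitrary symmetric block on each $V_j$. Hence $\dim\mathcal{C}=\sum_{j=1}^q\binom{m_j+1}{2}$.

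With these identifications the SSP says exactly that $\mathcal{Z}\cap\mathcal{C}=\{O\}$. The key step is then the standard subspace inequality $\dim(\mathcal{Z}\cap\mathcal{C})\ge \dim\mathcal{Z}+\dim\mathcal{C}-\binom{n+1}{2}$, which, since the left side is $0$, forces
\[
\left(\binom{n}{2}-|E(G)|\right)+\sum_{j=1}^q\binom{m_j+1}{2}\le\binom{n+1}{2}.
\]
To finish I would simplify using $\binom{n+1}{2}-\binom{n}{2}=n$, the identity $\binom{m_j+1}{2}=\binom{m_j}{2}+m_j$, and $\sum_{j=1}^q m_j=n$; these collapse the displayed inequality to $\sum_{j=1}^q\binom{m_j}{2}\le|E(G)|$, which is the claim.

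The argument is essentially routine once the two subspaces are set up, so I do not anticipate a serious obstacle. The one place requiring genuine care — and the only spot where an error could slip in — is the dimension count for $\mathcal{C}$: one must invoke symmetry of $A$ (hence orthogonal diagonalizability with orthogonal eigenspaces) to conclude that a \emph{symmetric} matrix commuting with $A$ has no cross terms between distinct eigenspaces and restricts to a symmetric operator on each block, giving the factor $\binom{m_j+1}{2}$ rather than $m_j^2$. It is precisely the passage from $\binom{m_j+1}{2}$ to $\binom{m_j}{2}$, effected by subtracting $\sum_j m_j=n$, that converts the ambient bookkeeping into the clean edge bound.
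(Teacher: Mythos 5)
Your proof is correct: the two subspaces are identified accurately, the dimension counts $\dim\mathcal{Z}=\binom{n}{2}-|E(G)|$ and $\dim\mathcal{C}=\sum_{j=1}^q\binom{m_j+1}{2}$ are right, the SSP is indeed exactly the statement $\mathcal{Z}\cap\mathcal{C}=\{O\}$, and the arithmetic at the end collapses correctly to the edge bound. Note, however, that this paper does not prove the statement at all; it imports it as \cite[Corollary~29(b)]{genSAP}. In that source the bound is obtained from the manifold framework: the isospectral manifold $\mei_A$ has codimension $\sum_j\binom{m_j+1}{2}$ in the space of symmetric matrices (its normal space at $A$ is precisely your commutant $\mathcal{C}$), the SSP is read as transversality of $\mei_A$ and $\mptn(G)$, and the bound follows because transversal intersection forces $\dim\mei_A+\dim\mptn(G)\ge\binom{n+1}{2}$. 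Your argument is the exact linear-algebra dual of that count: transversality of the two manifolds (tangent spaces spanning everything) is equivalent to their normal spaces meeting only in $O$, and your $\mathcal{Z}$ is the orthogonal complement of the tangent space to $\mptn(G)$ at $A$. What your route buys is self-containedness: you never need to know that the isospectral set is a manifold, nor any transversality machinery --- only the definition of the SSP and the inequality $\dim(U\cap W)\ge\dim U+\dim W-\dim(\text{ambient})$. What the manifold formulation buys in \cite{genSAP} is reusability: the same tangent-space computation there also yields the perturbation and supergraph results that this paper relies on elsewhere, whereas your count extracts only the numerical consequence.
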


\begin{prop}\label{prop:SMPnotSSP}  Let $H$ be the graph shown in Figure \ref{fig:SMPnotSSP}.  If $A\in\mptn(H)$ and  $\oml(A)=(3,5,4)$, then $A$ does not have the SSP.  The matrix 

\[B=\left(
{\scriptsize\begin{array}{rrrrrrrrrrrr}
 1 & -1 & 0 & 0 & 0 & -1 & \sqrt{2} & 0 & 0 & 0 & 0 & 0 \\
 -1 & 1 & -1 & 0 & 0 & 0 & 0 & \sqrt{2} & 0 & 0 & 0 & 0 \\
 0 & -1 & 1 & -1 & 0 & 0 & 0 & 0 & \sqrt{2} & 0 & 0 & 0 \\
 0 & 0 & -1 & 1 & -1 & 0 & \sqrt{2} & 0 & 0 & 0 & 0 & 0 \\
 0 & 0 & 0 & -1 & 1 & -1 & 0 & \sqrt{2} & 0 & 0 & 0 & 0 \\
 -1 & 0 & 0 & 0 & -1 & 1 & 0 & 0 & \sqrt{2} & 0 & 0 & 0 \\
 \sqrt{2} & 0 & 0 & \sqrt{2} & 0 & 0 & -2 & 0 & 0 & 2 & 0 & 0
   \\
 0 & \sqrt{2} & 0 & 0 & \sqrt{2} & 0 & 0 & -2 & 0 & 0 & 2 & 0
   \\
 0 & 0 & \sqrt{2} & 0 & 0 & \sqrt{2} & 0 & 0 & -2 & 0 & 0 & 2
   \\
 0 & 0 & 0 & 0 & 0 & 0 & 2 & 0 & 0 & 0 & 1 & 1 \\
 0 & 0 & 0 & 0 & 0 & 0 & 0 & 2 & 0 & 1 & 0 & 1 \\
 0 & 0 & 0 & 0 & 0 & 0 & 0 & 0 & 2 & 1 & 1 & 0 
\end{array}}
\right)\in\mptn(H)\]
has  $\spec(B)=\{-4,-4,-4,0,0,0,0,0,3,3,3,3\}, \oml(B)=(3,5,4)$, and $B$ has the SMP.
\end{prop}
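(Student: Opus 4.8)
The plan is to establish the four assertions about $B$ in two stages: the failure of the SSP for \emph{every} matrix with list $(3,5,4)$ comes from an edge count, while the positive SMP claim for the specific $B$ comes from a commutant computation controlled by the trace conditions. Note that since Part~1 applies to $B$ itself, the proposition is exactly a separation of SMP from SSP.

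\textbf{No matrix with this list has the SSP.} Since $B\in\mptn(H)$ forces $\G(B)=H$, I would first read the edge set of $H$ off the support of $B$: vertices $1,\dots,6$ form a $6$-cycle $1\text{-}2\text{-}3\text{-}4\text{-}5\text{-}6\text{-}1$, vertex $7$ joins $1,4$, vertex $8$ joins $2,5$, vertex $9$ joins $3,6$, then $7\text{-}10$, $8\text{-}11$, $9\text{-}12$, and finally $10,11,12$ form a triangle, for a total of $6+6+3+3=18$ edges. Because $\binom{3}{2}+\binom{5}{2}+\binom{4}{2}=3+10+6=19>18=|E(H)|$, Theorem~\ref{edgebdSSP} (contrapositive) shows that no $A\in\mptn(H)$ with $\oml(A)=(3,5,4)$ has the SSP; in particular $B$ does not.

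\textbf{Spectrum and ordered multiplicity list of $B$.} Next I would confirm $\spec(B)=\{-4,-4,-4,0,0,0,0,0,3,3,3,3\}$, whence $\oml(B)=(3,5,4)$. The cleanest route is to check $(B+4I)\,B\,(B-3I)=O$, so that $\spec(B)\subseteq\{-4,0,3\}$, and then to compute the nullities $\nul(B+4I)=3$, $\nul(B)=5$, $\nul(B-3I)=4$ to fix the multiplicities. As consistency checks one has $\tr(B)=6-6=0=3(-4)+5\cdot 0+4\cdot 3$ and $\tr(B^2)=\sum_{i,j}b_{ij}^2=18+66=84=3\cdot 16+4\cdot 9$. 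These are finite computations to be recorded in \cite{suppdocs}.

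\textbf{SMP of $B$.} The substance is to show that the only symmetric $X$ with $B\circ X=O$, $I\circ X=O$, $[B,X]=O$, and $\tr(B^iX)=0$ for $i=2,\dots,11$ is $X=O$. The two Hadamard conditions say exactly that $X$ has zero diagonal and vanishes on all $18$ edges, so $X$ lives in the $\binom{12}{2}-18=48$-dimensional space of matrices supported on non-edges. Let $\mathcal{W}$ denote the symmetric matrices satisfying $B\circ X=O$, $I\circ X=O$, and $[B,X]=O$; by Part~1, $\mathcal{W}\neq\{O\}$, so the trace conditions are genuinely doing the work. Writing $B=\sum_\lambda \lambda P_\lambda$ for its spectral projections and using that every $X\in\mathcal{W}$ commutes with each $P_\lambda$, one gets $\tr(B^iX)=(-4)^i t_{-4}+3^i t_{3}$ for $i\ge 1$, where $t_\lambda:=\tr(P_\lambda X)$ (the $\lambda=0$ term vanishes). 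The equations for $i=2,3$ already form the invertible system $16t_{-4}+9t_3=0$, $-64t_{-4}+27t_3=0$, so the whole family of trace conditions is equivalent to $t_{-4}=t_3=0$. Hence $B$ has the SMP if and only if the linear map $\mathcal{W}\to\R^2$, $X\mapsto\bigl(t_{-4}(X),t_3(X)\bigr)$, is injective.

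\textbf{Main obstacle.} The crux, and the one genuinely computational step, is to determine $\mathcal{W}$ and verify this injectivity: solve $[B,X]=O$ over the $48$-dimensional non-edge support space to obtain a basis of $\mathcal{W}$, and check that the two trace functionals separate it (equivalently $\dim\mathcal{W}\le 2$ with the trace map injective). To keep this tractable I would exploit the cyclic symmetry of the pair $(H,B)$: $B$ is invariant under the order-$6$ permutation $\sigma$ that fixes the classes $\{1,\dots,6\}$, $\{7,8,9\}$, $\{10,11,12\}$ and acts as $i\mapsto i+1\pmod 6$ on the hexagon, $7\to 8\to 9\to 7$, and $10\to 11\to 12\to 10$. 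Block-diagonalizing over the characters of $\langle\sigma\rangle$ decomposes both $B$ and the support constraints into small invariant pieces, making $\mathcal{W}$ easy to compute by hand or symbolically; this is the step I would delegate to the verification in \cite{suppdocs}, exactly as the SSP checks elsewhere in the paper are handled.
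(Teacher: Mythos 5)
Your proposal is correct, and its first half is exactly the paper's argument: the paper also rules out the SSP for every $A\in\mptn(H)$ with $\oml(A)=(3,5,4)$ from the count $\binom{3}{2}+\binom{5}{2}+\binom{4}{2}=19>18=|E(H)|$ together with Theorem~\ref{edgebdSSP}. Where you genuinely diverge is the positive claim about $B$: the paper disposes of it in one sentence, declaring the spectrum and the SMP ``straightforward to verify computationally'' and delegating everything to \cite{suppdocs} (in effect, a full-row-rank check of the $48\times 69$ SMP verification matrix $\verM(B)$ of Definition~\ref{vermtxdef}). You instead preprocess the SMP condition structurally: writing $B=\sum_\lambda\lambda P_\lambda$, the identity $\tr(B^iX)=\sum_\lambda\lambda^i\tr(P_\lambda X)$ shows that the ten conditions $\tr(B^iX)=0$, $i=2,\dots,11$, collapse to the two conditions $\tr(P_{-4}X)=\tr(P_{3}X)=0$, since the $i=2,3$ equations already form an invertible $2\times 2$ system; hence the SMP is equivalent to injectivity of the trace map $\mathcal{W}\to\R^2$ on the space $\mathcal{W}$ of non-edge-supported symmetric matrices commuting with $B$ --- a space you correctly observe is nonzero because the SSP fails, so that $1\le\dim\mathcal{W}\le 2$ is forced. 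Your identification of the order-$6$ automorphism of $(H,B)$ (which I checked does preserve $B$, including the weights $-1,\sqrt{2},2,1$) further shrinks the residual computation. Both routes are sound and both end in a finite linear-algebra verification that must be delegated or done by hand; yours buys conceptual clarity --- it isolates exactly which two scalar conditions rescue the SMP where the SSP fails, and replaces a brute-force rank computation by a small commutant calculation --- while the paper's buys brevity.
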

\begin{figure}[!h]
\begin{center}
\scalebox{.35}{\includegraphics{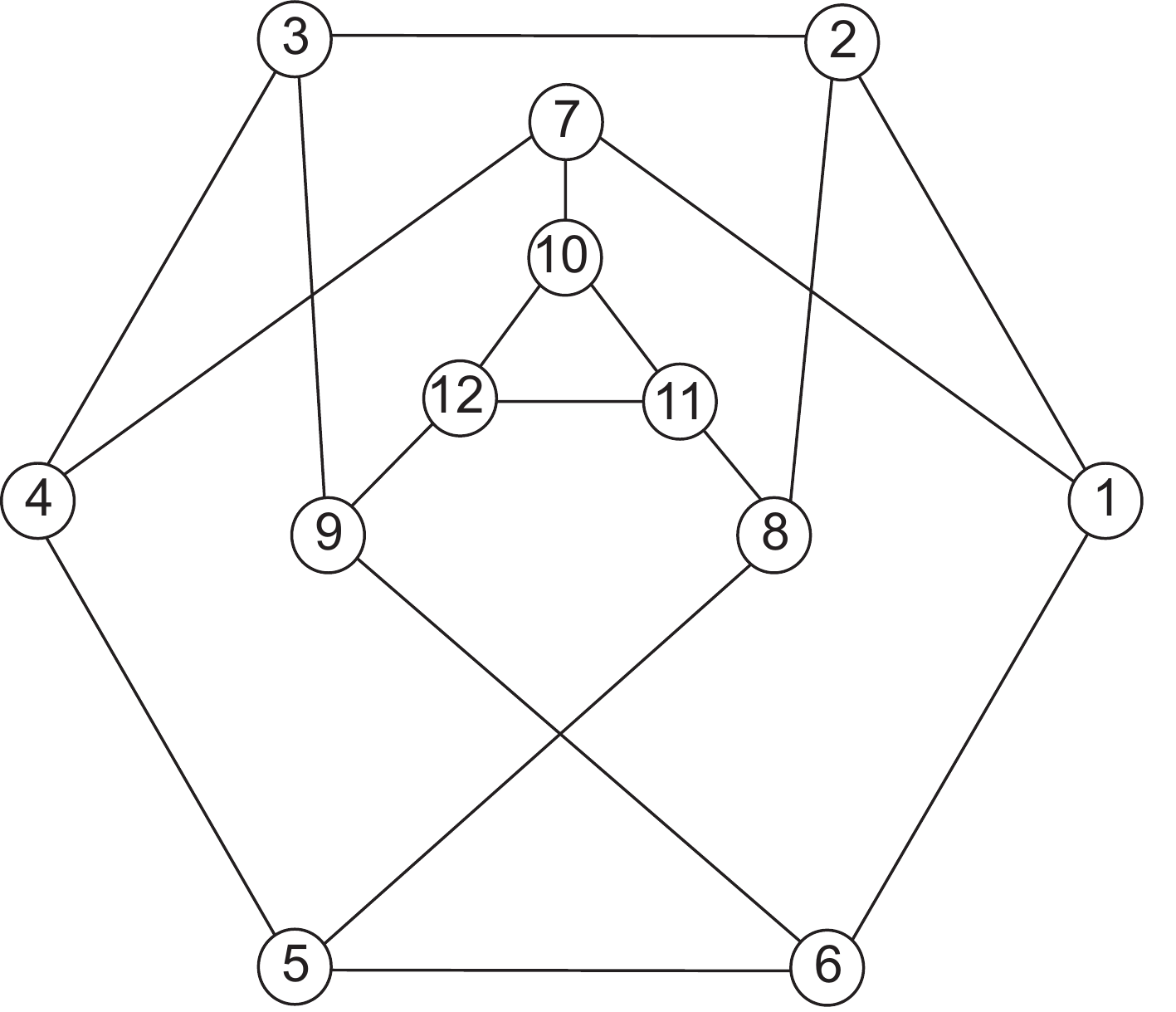}}
\caption{The graph $H$, which allows ordered multiplicity list (3,5,4) with the SMP but not with the SSP}\label{fig:SMPnotSSP}\vspace{-10pt}
\end{center}
\end{figure}%

\bpf    Consider a matrix $A\in \mptn(H)$ with $\oml(A)=(3,5,4)$.  Since ${3 \choose 2}+{5 \choose 2}+{4 \choose 2}=19>18=|E(H)|$,  $A$ does not have the SSP by Theorem \ref{edgebdSSP}.  It is straightforward to verify computationally that $B$ does have the SMP  (see \cite{suppdocs}) and has the stated spectrum.
\epf\vspace{-10pt}


\section{Minimal minors for multiple eigenvalues}\label{smultevals}
An eigenvalue is {\em multiple} if it is not simple, i.e., if it has multiplicity at least two.  In this section we determine the forbidden minors for a graph to have at most one multiple eigenvalue in a matrix with the SSP or the SMP, and characterize connected graphs that do not have two consecutive multiple eigenvalues.


\subsection{Minimal minors having at least two multiple eigenvalues}

\begin{thm} Let $G$ be a graph.  
\ben[(1)]
\item\label{thm611}
If $G$ is a connected graph and none of the eleven graphs shown in Figure \ref{fig:11minminor} is a minor of $G$, then any matrix $A\in\mptn(G)$ has at most  one multiple eigenvalue.
\item \label{thm612}
There exists a matrix $A \in \mptn(G)$ with SSP (or with the SMP) having more than one multiple eigenvalue if and only if one of the eleven graphs shown in Figure \ref{fig:11minminor} is a minor of $G$.
\een
\end{thm}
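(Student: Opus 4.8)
The plan is to treat the property ``some $A\in\mptn(G)$ with the SSP (resp.\ the SMP) has more than one multiple eigenvalue'' as a property closed upward under the minor order, and then to show that the eleven graphs of Figure~\ref{fig:11minminor} are precisely its minor-minimal members. Upward closure is exactly the Minor Monotonicity Theorem (Theorem~\ref{minormon}): if a minor $F$ of $G$ carries an SSP (or SMP) matrix $A$ with two multiple eigenvalues $\lambda\ne\mu$, then some $B\in\mptn(G)$ has the SSP (or SMP) with $\spec(A)\subseteq\spec(B)$, the new eigenvalues entering only as simple ones, so $\lambda,\mu$ retain multiplicity at least $2$ in $B$. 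This already yields the ``if'' direction of part~(2), \emph{provided} each of the eleven graphs has such a matrix. I would supply those realizations explicitly, drawing several from Lemma~\ref{5matrixlemma} (the lists $(2,2,1)$ and $(2,1,2)$, together with $C_4$ realizing $(2,2)$); since the SSP implies the SMP, the same matrices handle the SMP version.

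The remaining, and principal, content is completeness of the list, i.e.\ the ``only if'' direction of part~(2) together with part~(1). Part~(1) is the structural heart, and I would prove its contrapositive: if a connected graph $G$ admits \emph{any} matrix with two multiple eigenvalues, then one of the eleven graphs is a minor of $G$. The route is to classify the connected graphs that exclude all eleven minors---this should be a tightly restricted family assembled from paths, small stars, and small unicyclic or near-tree pieces---and then to bound multiplicities on each class. The tools for the bound are the Parter--Wiener Theorem (Theorem~\ref{PWthm}) and Cauchy interlacing, reinforced by Lemma~\ref{firstlast} (simple first and last eigenvalues for trees), Lemma~\ref{collision} (no consecutive multiple eigenvalues on a generalized star), and Lemma~\ref{firstlast2} (a simple extreme eigenvalue for odd unicyclic graphs). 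On each class these should force every matrix to have at most one eigenvalue of multiplicity $\ge 2$.

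It remains to deduce the disconnected case of part~(2). Here I would first observe that both the SSP and the SMP force the spectra of distinct components to be disjoint: if components $i\ne j$ shared an eigenvalue $\nu$, with unit eigenvectors $u_i,u_j$ extended by zero to the whole index set, then $X=u_iu_j\trans+u_ju_i\trans$ is a nonzero symmetric matrix satisfying $A\circ X=O$, $I\circ X=O$, $[A,X]=O$, and $\tr(A^kX)=0$ for every $k$, contradicting the defining condition of either property. Hence each multiple eigenvalue of a block-diagonal SSP/SMP matrix lives inside a single component. If two multiple eigenvalues occupy one component $G_i$, then $G_i$ is connected with a matrix having two multiple eigenvalues, and part~(1) supplies one of the eleven minors; if they occupy two different components $G_i,G_j$, then each has $\M(G_i)\ge 2$ and hence contains $K_3$ or $K_{1,3}$ as a minor (the classical description of paths as the connected $\{K_3,K_{1,3}\}$-minor-free graphs), so $G$ contains one of $K_3\dcup K_3$, $K_3\dcup K_{1,3}$, or $K_{1,3}\dcup K_{1,3}$, all of which are among the eleven.

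I expect the main obstacle to be the structural step inside part~(1): proving that forbidding all eleven minors really does confine a connected graph to the restricted family, and then certifying the ``at most one multiple eigenvalue'' bound uniformly on that family, is where the genuine case analysis lies. A secondary bookkeeping task is to check that the eleven graphs are pairwise incomparable in the minor order and each minimal (no proper minor has the property), so that the list is neither redundant nor incomplete; for the three disconnected members this follows from the minor-minimality of $K_3$ and $K_{1,3}$ for carrying a single multiple eigenvalue.
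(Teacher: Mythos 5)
Your overall architecture coincides with the paper's: Theorem~\ref{minormon} gives upward closure under minors, explicit SSP realizations on the eleven graphs give the ``if'' direction of (2), and the disconnected case reduces to the connected case via disjointness of component spectra. Your disconnected reduction is correct and in fact more self-contained than the paper's: your certificate $X=\bu_i\bu_j\trans+\bu_j\bu_i\trans$ proves the disjointness that the paper imports from \cite[Theorem~34]{genSAP}, and your split according to whether the two multiple eigenvalues lie in one component or in two (using that connected $\{K_3,K_{1,3}\}$-minor-free graphs are paths) cleanly replaces the paper's split on whether some component contains a cycle.

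The genuine gap is exactly where you predict it: part (1) is never proved, only described as a classification that ``should'' work, and the toolkit you list would not complete it. Two concrete failures. First, for connected graphs containing a cycle, the exclusions force $G$ to be a path together with a chord joining two vertices at distance $2$ (a unique triangle); on this class Parter--Wiener does not apply (it is a theorem about trees), interlacing against the path $G-v$ only bounds each multiplicity by $2$, and Lemma~\ref{firstlast2} only makes one extreme eigenvalue simple---none of this excludes two distinct eigenvalues of multiplicity $2$. The paper closes this case with a tool you do not list: $q(G)=n-1$ for such graphs (\cite[Proposition~50 and Theorem~51]{genSAP}, in essence a unique-shortest-path argument), which forces at most one multiple eigenvalue by counting. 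Second, in the tree case Lemma~\ref{collision} is a red herring: generalized stars do admit two \emph{non-consecutive} multiple eigenvalues---indeed $K_{1,6}$, a star, is itself one of the eleven minors, with ordered multiplicity list $(1,2,1,2,1)$---so no consecutive-eigenvalue argument can settle part (1). What is actually needed is the paper's argument: excluding the $H$-tree leaves at most one vertex $v$ of degree at least $3$, Parter--Wiener forces each multiple eigenvalue to appear, with multiplicity increased, in at least three components of $G-v$, and the exclusions of $K_{1,6}$, $S(2,1,1,1,1)$, $S(2,2,1,1)$, $S(2,2,2)$ leave too few suitable branches at $v$ for two such eigenvalues to coexist. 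A smaller bookkeeping point: Lemma~\ref{5matrixlemma} supplies a realization only for Campstool among the eleven graphs (and Table~\ref{OML4tab} covers $C_4$); the $H$-tree, $3$-sun, $K_{1,6}$, and the three trees above require the separate matrices and citations assembled in Table~\ref{tab11minminor}.
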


\begin{figure}[!h]
\begin{center}
\scalebox{.4}{\includegraphics{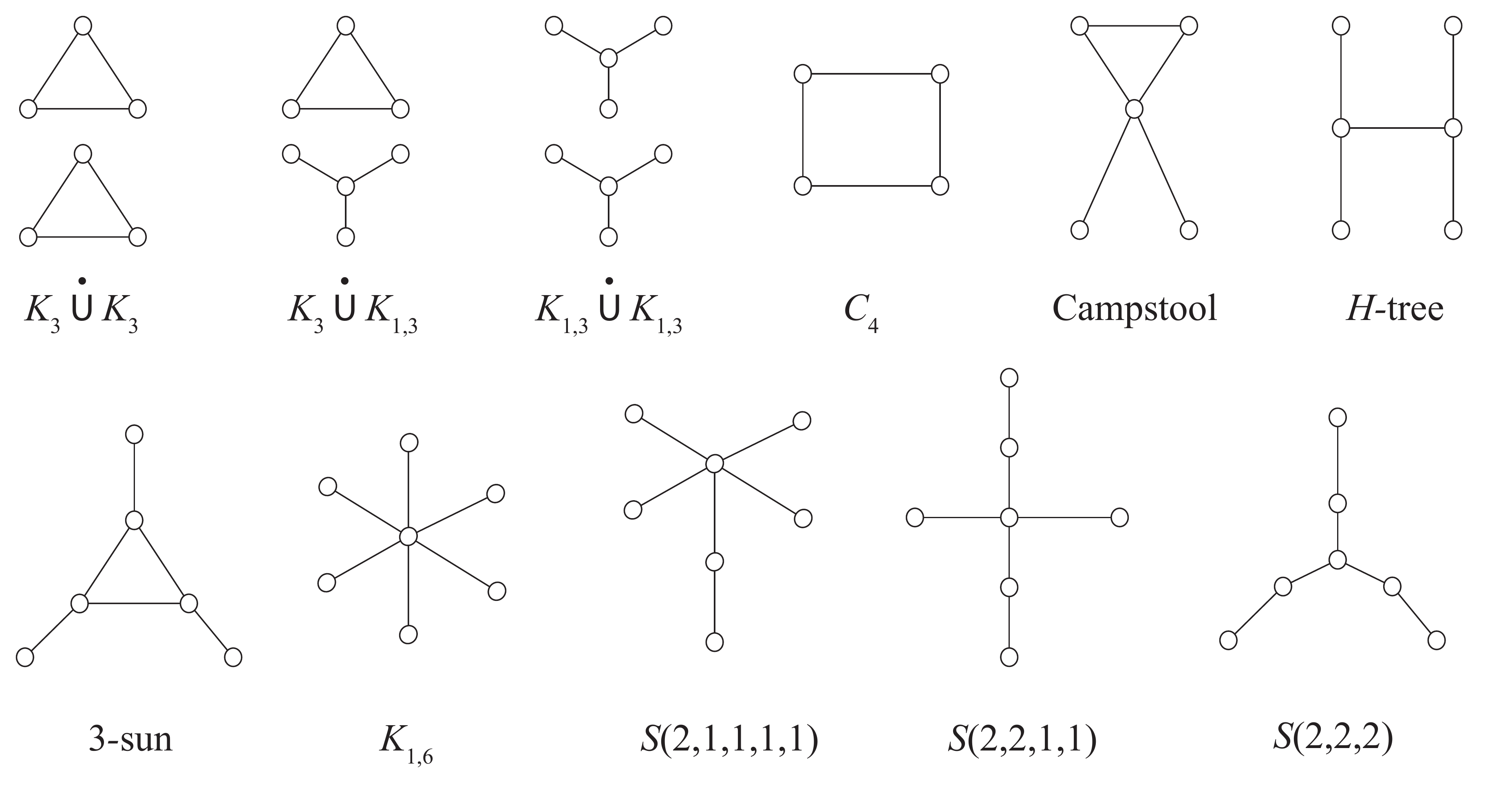}}
\caption{The eleven minimal minors for two multiple eigenvalues\label{fig:11minminor}}\vspace{-10pt}
\end{center}
\end{figure}

\bpf 

\noi\eqref{thm611} Suppose  that $G$ is a connected graph that has none of these eleven graphs as a minor.  We show that  $A \in \mptn(G)$  
can have at most one multiple eigenvalue.  The statement is immediate if $|G|\le 3$ so assume $|G|\ge 4$.  \smallskip

\underline{Case A:  $G$ contains a cycle.}  Since $C_4$ is not a minor of $G$, the only cycles in $G$ are triangles.  Moreover, there can be no pair of disjoint triangles in $G$ since  $K_3 \dcup K_3$ is not a minor of $G$, there can be no pair of  triangles intersecting in exactly a single vertex since Campstool is not a minor of $G$, and there can be no pair of triangles intersecting in exactly a single edge since $C_4$ is not a minor of $G$.  Thus there is a single 3-cycle $C$ in $G$.   Since the $3$-sun is not a minor of $G$, one of the vertices  of $C$ has degree $2$, and since  Campstool is not a minor of $G$, the degrees of the other two vertices of $C$ are at most $3$.  Since $|G|\ge 4$, at least one of the  vertices of $C$ has degree 3.  
Then all vertices not on $C$ must have degree at most $2$ or else the $H$-tree would be a minor of $G$.  It follows that $G$ is a path with an additional edge joining two vertices on the path at distance $2$.  By \cite[Proposition~50 and Theorem~51]{genSAP}, $q(G) = n-1$ and $A$ has at most a single multiple eigenvalue.  
\smallskip

\underline{Case B:  $G$ is a tree.}  Since the $H$-tree is not a minor of $G$, $G$ has at most one vertex $v$ of degree greater than $2$.   If $A$ has two distinct  eigenvalues of multiplicity at least $2$, then by the Parter-Wiener Theorem (Theorem \ref{PWthm}), $A[G-v]$ has two eigenvalues of multiplicity at least  $3$ and each of these must occur in at least 3 different components of $G-v$.  We show this is impossible.

The degree of $v$ is at most 5 since $K_{1,6}$ is not a minor of $G$.  If the degree of $v$ is $5$, then $G$ is  $K_{1,5}$ since $S(2, 1, 1, 1, 1)$ is not a minor of $G$, but this does not permit 3 components each for 2 eigenvalues.
Suppose the degree of $v$ is $4$.  Since $S(2,2,1,1)$ is not a minor of $G$, all but one of the neighbors of $v$ is a pendent vertex, which also does not permit 3 components each for 2 eigenvalues.
Suppose the degree of $v$ is 3.  Since $S(2,2,2)$ is not a minor, at least one neighbor of $v$ is pendent, but this does not permit 3 components each with 2 eigenvalues.
The only remaining case is that $G$ is a path, and then all eigenvalues of $A$ are simple.\smallskip

  \begin{table}[h!]
\begin{center} 
\caption{Matrices with two double eigenvalues and the SSP \label{tab11minminor}}\vspace{5pt}
\noindent {\small \renewcommand{\arraystretch}{1.3}\begin{tabular}{|c|c|c|c|}
\hline 
graph & matrix/reason & spectrum & OML  \\
\hline 
 $K_3\dcup K_3$ & Remark \ref{discon} &  & (2,2,1,1)  \\
\hline
$K_3 \dcup K_{1,3}$ & Remark \ref{discon} &  & $  (1, 2, 2, 1, 1)$\\
\hline
$K_{1,3} \dcup K_{1,3}$ & Remark \ref{discon} &
  & $  (1, 1, 2, 2, 1, 1)$\\
\hline 
 $C_4$ &Table \ref{OML4tab} & $\left\{-\sqrt{2},-\sqrt{2},\sqrt{2},\sqrt{2}\right\}$ & $  (2, 2)$
  \\
\hline
Campstool & \cite[Proposition~44]{genSAP}  &
 $ \{-2, 0, 0, 2, 2\}$ &$  (1, 2, 2)$\\
\hline
$H$-tree & \cite[Proposition~44]{genSAP} &  $  \left\{\frac 1 2(1 - \sqrt{29}), 0, 0, 1, 1, \frac 1 2(1 + \sqrt{29})\right\}$ & $ (1, 2, 2, 1)$\\
\hline
3-sun & \cite[Proposition~44]{genSAP} &  $ \left\{ 0, 0, \frac 1 2(5-\sqrt{13}), 2, 2,\, \frac 1 2(5+\sqrt{13})\right\}$ & $ (2, 1, 2, 1)$\\
\hline
$K_{1,6}$
&
${\scriptsize\mtx{
0 & 1 & 1 & 1 & 2 & 2 & 2 \\
 1 & 1 & 0 & 0 & 0 & 0 & 0 \\
 1 & 0 & 1 & 0 & 0 & 0 & 0 \\
 1 & 0 & 0 & 1 & 0 & 0 & 0 \\
 2 & 0 & 0 & 0 & 0 & 0 & 0 \\
 2 & 0 & 0 & 0 & 0 & 0 & 0 \\
 2 & 0 & 0 & 0 & 0 & 0 & 0}}$ &  $\left\{\frac 12(-3-\sqrt{21}),\, 0, 0,\, \frac12(-3+\sqrt{21}), 1, 1, 4\right\}$& $( 1, 2, 1, 2, 1)$\\
\hline
$S(2, 1, 1, 1, 1)$
&
${\scriptsize\mtx{
0 & 1 & 0 & 3 & 2 & 1 & 1 \\
 1 & 1 & 1 & 0 & 0 & 0 & 0 \\
 0 & 1 & 1 & 0 & 0 & 0 & 0 \\
 3 & 0 & 0 & 2 & 0 & 0 & 0 \\
 2 & 0 & 0 & 0 & 2 & 0 & 0 \\
 1 & 0 & 0 & 0 & 0 & 0 & 0 \\
 1 & 0 & 0 & 0 & 0 & 0 & 0
}}$ &  $ \left\{ \frac 1 2(-3 - \sqrt{13}),\, 0, 0,\, \frac 1 2(\sqrt{13} -3), 2, 2 \right\}$ & $ (1, 2, 1, 2, 1)$\\
\hline
 S(2, 2, 1, 1)
&
${\scriptsize\mtx{
0 & 2 & 0 & 2 & 0 & 1 & \sqrt{2} \\
 2 & 1 & 1 & 0 & 0 & 0 & 0 \\
 0 & 1 & 1 & 0 & 0 & 0 & 0 \\
 2 & 0 & 0 & 1 & 1 & 0 & 0 \\
 0 & 0 & 0 & 1 & 1 & 0 & 0 \\
 1 & 0 & 0 & 0 & 0 & 2 & 0 \\
 \sqrt{2} & 0 & 0 & 0 & 0 & 0 & 0 
}}$ &  $ \{ -3, 0, 0, 1, 2, 2, 4\}$ & $ (1, 2, 1, 2, 1)$\\
\hline
 S(2,2,2)& \cite[Proposition~44]{genSAP} & $ \{-1.4605, 0, 0, .760877, 2, 2, 2.69963\}$ & $(1, 2, 1, 2, 1)$
\\
\hline\end{tabular}}
\end{center}\vspace{-10pt} 
\end{table}

\eqref{thm612} ($\Rightarrow$):  Suppose  that $G$ does not have any  of these eleven graphs as a minor.  The case in which $G$ is connected is covered by \eqref{thm611}, so assume $G$ is disconnected.   
Since $K_3 \dcup K_3$ is not a minor,  $G$ has at most one component with a cycle.  Let $A \in S(G)$ have the SMP. 

\underline{Case A:   $G$ has a component that contains a cycle.} Call this component  $G_1$.  Since $K_3 \dcup K_{1,3}$ is not a minor of $G$, all other components of $G$ are paths.  By \cite[Theorem~34]{genSAP}, the spectra associated with the components must be disjoint, so all multiple eigenvalues must be associated with the connected component $G_1$.  Then by \eqref{thm611}, all but one of the eigenvalues associated with $G_1$ are simple.  

\underline{Case B: $G$ is a forest.}  Since $K_{1,3} \dcup K_{1,3}$ is not a minor, all but possibly one component are paths; denote the non-path component by  $T_1$.  Since the spectra associated with the components must be disjoint,  all multiple eigenvalues must be associated with the component $T_1$, and by \eqref{thm611}, all but one of the eigenvalues associated with $T_1$ are simple.  
\smallskip

\eqref{thm612} ($\Leftarrow$): For each graph in Figure \ref{fig:11minminor}, Table \ref{tab11minminor} lists one of the following: i) A matrix  in $\mptn(G)$  (or a citation of a reference that contains such a matrix)  together with its eigenvalues and ordered multiplicity list; the matrix has two eigenvalues of multiplicity $2$ and  has the SSP.  ii) A reason that implies the graph has a matrix with two multiple eigenvalues and the SSP.  
 Then, by Theorem \ref{minormon}, any graph that has one of these eleven graphs as a minor must allow a matrix with the SSP that has at least two multiple eigenvalues.
\epf

\newpage


\subsection{Minimal minors having at least two consecutive multiple eigenvalues}

Let $\lambda_1<\lambda_2<\cdots <\lambda_k$ be the distinct eigenvalues of a matrix.  Eigenvalues of the form $\lambda_i, \lambda_{i+1},\dots, \lambda_{i+s}$ with $s\ge 1$ are referred to as  {\em consecutive}.  Theorem \ref{thm:twoconmulti} below characterizes, by forbidden minors, the graphs that do not allow two consecutive multiple eigenvalues.  To prove this theorem, we need some additional  results and notation.  If $\mathcal{F}$ is a family of graphs, we say that a graph $G$ {\em does not have an $\mathcal{F}$-minor} if no graph in $\mathcal{F}$ is a minor of $G$.

Examining Table \ref{tab11minminor}, some of the eleven graphs in Figure \ref{fig:11minminor} allow a matrix with the SSP and  two consecutive multiple eigenvalues, including 
\[\{K_3\dcup K_3, K_3\dcup K_{1,3}, K_{1,3}\dcup K_{1,3},C_4, \text{Campstool}, H\text{-tree}\}.\]
The remaining graphs are $3\text{-sun},  K_{1,6}, S(2,1,1,1,1),S(2,2,1,1), S(2,2,2)$; all but the 3-sun are generalized stars.  

A \emph{generalized $3$-sun} is obtained from the 3-sun by subdividing each edge incident with a vertex of degree one as many times as desired  (note no subdivision is acceptable, so the 3-sun is also a generalized 3-sun).  It is well known that a generalized star $G$ does not allow a matrix with two  consecutive multiple eigenvalues (Lemma \ref{collision}), and we show this is also a property of a  generalized $3$-sun.


\begin{lem}\label{3sunlem}
A generalized $3$-sun does not allow a matrix with two consecutive multiple eigenvalues.
\end{lem}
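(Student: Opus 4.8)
The plan is to fix $A\in\mptn(G)$ for a generalized $3$-sun $G$ and suppose toward a contradiction that $A$ has two consecutive multiple eigenvalues $\lambda<\mu$ (so no eigenvalue of $A$ lies strictly between them). Write the triangle of $G$ as $c_1c_2c_3$ with off-diagonal triangle weights $e_{ij}=a_{c_ic_j}\ne 0$, and let $L_i$ be the pendant path (``leg'') rooted at $c_i$, with $\Lambda_i=A[L_i]$ the corresponding irreducible tridiagonal (hence simple-spectrum) Jacobi matrix. The central device is the Schur complement onto the triangle: for $\theta$ not in any $\spec(\Lambda_i)$, eliminating the three legs from $A-\theta I$ leaves the symmetric $3\times 3$ matrix $M_\theta$ with off-diagonal entries $e_{ij}$ and diagonal entries $d_i(\theta)=a_{c_ic_i}-\theta-a_{c_iu^i_1}^2\,[(\Lambda_i-\theta I)^{-1}]_{11}$, where $u^i_1$ is the neighbor of $c_i$ on $L_i$. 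Two facts about $M_\theta$ drive everything: (i) $\det(A-\theta I)=\det(M_\theta)\prod_i\det(\Lambda_i-\theta I)$ together with the Haynsworth inertia identity, so that for resonance-free $\theta$ the number $N_A(\theta)$ of eigenvalues of $A$ below $\theta$ equals $\sum_i N_{\Lambda_i}(\theta)+n_-(\theta)$, where $n_-(\theta)$ is the number of negative eigenvalues of $M_\theta$; and (ii) each $d_i$ is strictly decreasing on every interval free of $\spec(\Lambda_i)$ (because $[(\Lambda_i-\theta I)^{-1}]_{11}$ is strictly increasing there), while $d_i\to-\infty$ as $\theta$ approaches such an eigenvalue from below and $d_i\to+\infty$ from above.

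First I would analyze a single multiple eigenvalue. A short eigenvector computation over the legs shows that if $\mult_A(\lambda)\ge 2$ then $\lambda\notin\spec(\Lambda_i)$ for every $i$ (no leg is ``resonant''): if one or more legs were resonant, then $x_{c_i}=0$ would be forced there, and the resulting reduced system has rank at least $2$ because some surviving triangle weight $e_{jk}\ne 0$ fills a $2\times 2$ block, leaving multiplicity at most $1$. Hence for a multiple eigenvalue we may use $M_\lambda$, and $\mult_A(\lambda)=3-\rank M_\lambda$; since the off-diagonal entries $e_{ij}$ are nonzero we have $\rank M_\lambda\ge 1$, so in fact $\mult_A(\lambda)=2$ and $M_\lambda$ has rank exactly $1$. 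The rank-$1$ condition on a symmetric matrix with prescribed nonzero off-diagonal entries pins the diagonal down completely: vanishing of the mixed $2\times 2$ minors forces $d_i(\lambda)=e_{ij}e_{ik}/e_{jk}=:D_i$ for each $i$, a value depending only on the fixed triangle weights. Applying this to both $\lambda$ and $\mu$ gives $d_i(\lambda)=d_i(\mu)=D_i$ for every $i$.

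Now I would extract the two competing counts for $E:=\sum_i\#\big(\spec(\Lambda_i)\cap(\lambda,\mu)\big)$. On one hand, since $d_i$ is strictly decreasing off $\spec(\Lambda_i)$, the equality $d_i(\lambda)=d_i(\mu)$ with $\lambda<\mu$ forces at least one eigenvalue of $\Lambda_i$ in the open interval $(\lambda,\mu)$; doing this for each of the three legs gives $E\ge 3$. On the other hand, the Haynsworth identity caps $E$. Both $M_\lambda$ and $M_\mu$ have rank $1$ with the \emph{same} nonzero eigenvalue $\gamma=D_1+D_2+D_3$; tracking the two eigenvalues of $M_\theta$ that vanish at $\lambda$ (resp.\ at $\mu$) and using that all eigenvalues of $M_\theta$ are strictly decreasing yields $n_-(\lambda^+)=2+[\gamma<0]$ and $n_-(\mu^-)=[\gamma<0]$. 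Evaluating the Haynsworth identity at the resonance-free points $\theta=\lambda+\varepsilon$ and $\theta=\mu-\varepsilon$ and subtracting, while using that no eigenvalue of $A$ lies strictly between $\lambda$ and $\mu$, gives $0=N_A(\mu^-)-N_A(\lambda^+)=E+\big(n_-(\mu^-)-n_-(\lambda^+)\big)=E-2$, so $E=2$. This contradicts $E\ge 3$ and completes the proof.

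I expect the main obstacle to be conceptual rather than computational: unlike a tree or generalized star, the generalized $3$-sun has three vertices of degree $3$ and a cycle, and deleting any one triangle vertex splits $G$ into two paths, so each triangle vertex is a downer (not a Parter vertex) for a multiple eigenvalue. Consequently the Parter--Wiener interlacing argument behind Lemma \ref{collision} does not transfer, and the multiplicity is instead held together by the triangle. The real work is therefore to set up the $3\times 3$ Schur complement $M_\theta$ correctly and to reconcile the two counts for $E$; the delicate points are verifying the monotonicity and blow-up behavior of the $d_i$ and carrying out the inertia bookkeeping for $n_-$ at the endpoints, where the shared value $\gamma=D_1+D_2+D_3$ makes the sign term cancel. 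The case in which $\lambda$ happens to be an eigenvalue of some $\Lambda_i$ is disposed of up front by showing that such resonance is incompatible with $\mult_A(\lambda)\ge 2$.
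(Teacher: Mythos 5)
Your proposal is correct, but it takes a genuinely different route from the paper's proof. The paper exploits consecutiveness globally: since a generalized $3$-sun is unicyclic with an odd cycle, a signature similarity makes all off-diagonal entries of $A$ one sign, a shift and scale places the two consecutive multiple eigenvalues at $\pm 1$, and then $A^2-I$ is positive semidefinite with nullity at least $4$; its graph $H$ is sandwiched between the distance-two graph $G^{(2)}$ and the square $G^2$, and the three triangle vertices form a positive semidefinite zero forcing set of $H$, so $\Mplus(H)\le Z_+(H)\le 3$, a contradiction. Your argument instead localizes everything at the triangle via the Schur complement $M_\theta$: multiplicity at least $2$ (after excluding resonance) forces $M_\lambda$ and $M_\mu$ to be the \emph{same} rank-one matrix with diagonal $D_i=e_{ij}e_{ik}/e_{jk}$; strict monotonicity of each $d_i$ off $\spec(\Lambda_i)$ then forces every leg to have an eigenvalue in $(\lambda,\mu)$, so $E\ge 3$, while Haynsworth inertia additivity evaluated just inside the gap gives $E=n_-(M_{\lambda^+})-n_-(M_{\mu^-})=2$. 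The paper's proof buys brevity at the price of importing the $Z_+/\Mplus$ machinery; yours is self-contained linear algebra (Schur complements, Haynsworth, Weyl monotonicity, Jacobi-matrix facts) and yields finer structure along the way: each multiple eigenvalue has multiplicity exactly $2$, its Schur complement is rank one with triangle-determined diagonal, and the number of leg eigenvalues trapped in the gap is pinned down exactly.

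One step you should tighten is the opening no-resonance claim: as written, ``the resulting reduced system has rank at least $2$ because some surviving triangle weight fills a $2\times 2$ block'' is not an argument. The claim is true and completable along the lines you indicate: if $\lambda\in\spec(\Lambda_i)$, pairing the eigenvector equation on leg $i$ against the $\Lambda_i$-eigenvector $v$ (whose end entries are nonzero, as for any Jacobi matrix) forces $x_{c_i}=0$ for every $\lambda$-eigenvector $x$ of $A$; deleting $c_i$ leaves two disjoint paths, namely $L_i$ and $L_j$--$c_j$--$c_k$--$L_k$, each with simple spectrum, so the $\lambda$-eigenspace of $A$ embeds into a space of dimension at most $2$ spanned by $(v,0)$ and $(0,w)$; and if the dimension were $2$, the extension of $(v,0)$ by zero at $c_i$ would have to be an eigenvector of $A$, but the equation at $c_i$ then reads $a_{c_iu^i_1}v_1=0$ with both factors nonzero. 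With that step made precise, your proof is complete.
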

\begin{proof}
Suppose $A\in \mptn(G)$ has consecutive multiple eigenvalues.  Since $G$ is unicyclic with an odd cycle, we may assume all off diagonal entries have the same sign.  By shifting and scaling we may also assume the two consecutive multiple eigenvalues are $1$ and $-1$, so $A^2-I$ is a positive semidefinite matrix with nullity at least $4$.

Define the graphs $G^{(2)}$ and $G^2$ by $V(G^{(2)})=V(G^2)=V(G)$,  $E(G^{(2)})$ contains all pairs of vertices that have distance $2$ in $G$, and $E(G^2)=E(G)\dcup E(G^{(2)})$. Let $H$ be the graph corresponding to the matrix $A^2-I$.   Then {$E(G^{(2)})\subseteq E(H)\subseteq E(G^2)$}. 
 Now let $S$ be the three vertices on the center triangle of $G$.  Then $S$ is a positive semidefinite zero forcing set of $H$, meaning $M_+(H)\leq Z_+(H)\leq 3$, which is a contradiction.  For the definition of $Z_+(G)$ and its relation with $M_+(G)$, see \cite{smallparam} or \cite{HLA2}.
\end{proof}

\begin{thm}\label{thm:twoconmulti}
The following statements are equivalent:
\ben
\item\label{2consecmultc1} $G$ does not allow a matrix with the SSP that has two consecutive multiple eigenvalues;
\item\label{2consecmultc2} $G$ does not allow a matrix with the SMP that has two consecutive multiple eigenvalues;
\item\label{2consecmultc3} $G$ does not contain a minor in the family 
\[\mathcal{F}'_2=\{K_3\dcup K_3, K_3\dcup K_{1,3}, K_{1,3}\dcup K_{1,3}, C_4, {\rm Campstool}, H\mbox{-}{\rm tree}\};\]
\item\label{2consecmultc4} $G$ is a disjoint union of $G_1$ and any number of paths, where $G_1$ is either a generalized star or a generalized $3$-sun.
\een
\end{thm}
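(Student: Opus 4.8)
The plan is to prove the four statements equivalent by establishing the cycle $(\ref{2consecmultc1})\Rightarrow(\ref{2consecmultc3})\Rightarrow(\ref{2consecmultc4})\Rightarrow(\ref{2consecmultc2})\Rightarrow(\ref{2consecmultc1})$. The implication $(\ref{2consecmultc2})\Rightarrow(\ref{2consecmultc1})$ is immediate: since the SSP implies the SMP, any SSP matrix with two consecutive multiple eigenvalues is in particular an SMP matrix with two consecutive multiple eigenvalues, so forbidding the latter forbids the former. The three remaining implications carry the real content.

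For $(\ref{2consecmultc1})\Rightarrow(\ref{2consecmultc3})$ I would argue the contrapositive. Suppose some $F\in\mathcal{F}'_2$ is a minor of $G$. Each of the six graphs in $\mathcal{F}'_2$ carries a matrix with the SSP having two \emph{consecutive} multiple eigenvalues (these are exactly the graphs listed just before Lemma \ref{3sunlem} as admitting such a realization; the relevant entries of Table \ref{tab11minminor} exhibit two consecutive double eigenvalues). Applying the Minor Monotonicity Theorem (Theorem \ref{minormon}) to such an $A\in\mptn(F)$ produces $B\in\mptn(G)$ with the SSP, with $\spec(A)\subseteq\spec(B)$, where the new eigenvalues are simple and may be placed at the ends of the spectrum (or, in the contraction case, far from $\spec(A)$). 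In either case nothing is inserted strictly between the two consecutive multiple eigenvalues of $A$, and their multiplicities are preserved, so $B$ still has two consecutive multiple eigenvalues; thus $G$ fails $(\ref{2consecmultc1})$.

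For $(\ref{2consecmultc4})\Rightarrow(\ref{2consecmultc2})$, write $G=G_1\,\dcup\,P_{n_1}\,\dcup\cdots$ with $G_1$ a generalized star or a generalized $3$-sun, and let $A\in\mptn(G)$ have the SMP. By the Block Diagonal Theorem \cite[Theorem~34]{genSAP} the spectra of the diagonal blocks are pairwise disjoint, and each path block contributes only simple eigenvalues, so every multiple eigenvalue of $A$ must be a multiple eigenvalue of the block $A_1\in\mptn(G_1)$. If $A$ had consecutive multiple eigenvalues $\lambda<\mu$, then disjointness of the component spectra forces $\lambda,\mu\in\spec(A_1)$ with no eigenvalue of $A_1$ strictly between them; that is, $A_1$ would have two consecutive multiple eigenvalues, which is impossible by Lemma \ref{collision} (if $G_1$ is a generalized star) or Lemma \ref{3sunlem} (if $G_1$ is a generalized $3$-sun).

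The heart of the argument, and the step I expect to be the main obstacle, is the forbidden-minor classification $(\ref{2consecmultc3})\Rightarrow(\ref{2consecmultc4})$. Assuming $G$ has no $\mathcal{F}'_2$-minor, I would run a case analysis parallel to that in the proof of the preceding theorem, but now forbidding only the six smaller minors. Forbidding $C_4$ forces every cycle to be a triangle and rules out two triangles sharing an edge; forbidding $K_3\dcup K_3$ and Campstool rules out two vertex-disjoint triangles and two triangles meeting at a vertex, so $G$ contains at most one triangle and hence at most one cyclic component. Forbidding $K_{1,3}\dcup K_{1,3}$ confines vertices of degree $\ge 3$ to a single component, and forbidding $K_3\dcup K_{1,3}$ prevents such a branch vertex from lying in a component other than the one carrying the triangle; together these isolate a single exceptional component $G_1$, all other components having maximum degree $2$, i.e.\ being paths. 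Within $G_1$, forbidding the $H$-tree keeps every branch off a branch or triangle vertex a simple path (a second branch vertex contracts to an $H$-tree), and forbidding Campstool caps each triangle vertex at degree $3$. The delicate point is to conclude that $G_1$ is then \emph{precisely} a generalized star in the acyclic case or a generalized $3$-sun (a triangle with pendant paths) in the cyclic case, and in particular to reconcile the low-branch degenerate unicyclic graphs (a triangle carrying fewer than three pendant paths, which are path-plus-chord graphs handled as in \cite[Proposition~50, Theorem~51]{genSAP}) with the two families named in $(\ref{2consecmultc4})$; this structural bookkeeping, rather than any single computation, is where the care lies.
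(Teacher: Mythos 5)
Your proposal is correct and takes essentially the same approach as the paper: it establishes the identical cycle \eqref{2consecmultc2}$\Rightarrow$\eqref{2consecmultc1}$\Rightarrow$\eqref{2consecmultc3}$\Rightarrow$\eqref{2consecmultc4}$\Rightarrow$\eqref{2consecmultc2}, proving \eqref{2consecmultc1}$\Rightarrow$\eqref{2consecmultc3} by contrapositive from the Table \ref{tab11minminor} realizations together with Theorem \ref{minormon}, proving \eqref{2consecmultc3}$\Rightarrow$\eqref{2consecmultc4} by the same forbidden-minor case analysis (all but one component a path; the exceptional component a tree with one branch vertex or a triangle with pendant paths), and proving \eqref{2consecmultc4}$\Rightarrow$\eqref{2consecmultc2} via disjoint component spectra plus Lemmas \ref{collision} and \ref{3sunlem}. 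If anything you are more explicit than the paper, which labels \eqref{2consecmultc4}$\Rightarrow$\eqref{2consecmultc2} as ``obvious'' and does not remark on where Theorem \ref{minormon} places the added simple eigenvalues; the degenerate cases you flag (a triangle carrying fewer than three pendant paths) are silently absorbed in the paper's argument by allowing the attached trees $T_i$ to be trivial, so they pose no genuine obstacle.
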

\begin{proof}
For graphs $G$ in $\mathcal{F}'_2$, there is a matrix $A\in \mptn(G)$ with the SSP and having consecutive multiple eigenvalues (see Table \ref{tab11minminor}).  Then by Theorem \ref{minormon}, every graph that contains a $\mathcal{F}'_2$ minor allows a matrix with the SSP and having consecutive multiple eigenvalues.  Such a matrix also has the SMP.  In other words, \eqref{2consecmultc2} $\Rightarrow$ \eqref{2consecmultc1} $\Rightarrow$ \eqref{2consecmultc3}.

To see that \eqref{2consecmultc3} $\Rightarrow$ \eqref{2consecmultc4}, assume $G$ is a graph that does not contain a $\mathcal{F}'_2$ minor.  Since $G$ does not have any of $K_3\dcup K_3,$ $ K_3\dcup K_{1,3}, $ or $K_{1,3}\dcup K_{1,3}$ as a minor, every component of $G$ but one is a path.  Since paths do not allow multiple eigenvalues and the SSP guarantees eigenvalues from different components are not the same, we may assume $G$ is connected.  Since $G$ does not have any of $C_4,K_3\dcup K_3,$ or $ \text{Campstool}$ as a minor,   $G$ is either a tree or a unicyclic graph with a triangle.

Suppose $G$ is unicyclic with a triangle.  Consider $G$ formed by attaching three trees $T_1,T_2,T_3$ to a triangle on vertices $v_1,v_2,v_3$ respectively.  If $T_i$ is not a path with $v_i$ as an endpoint, then $G$ has  the Campstool as a minor.  Thus $G$ is a generalized $3$-sun, and $G$ does not allow a matrix having consecutive multiple eigenvalues by Lemma \ref{3sunlem}.

Suppose $G$ is a tree.  If $G$ has two vertices of degree at least 3, then $G$ has the $H$-tree as a minor.  So $G$ must be a generalized star, which does not allow a matrix having consecutive multiple eigenvalues by Lemma \ref{collision}.

The fact that \eqref{2consecmultc4} implies  \eqref{2consecmultc2} is obvious.
\end{proof}

\begin{cor}
A graph $G$ allows a matrix with two consecutive multiple eigenvalues and the SSP if and only if $G$ allows a matrix with two consecutive multiple eigenvalues and the SMP if and only if $G$ has a minor in the family $\mathcal{F}'_2=\{K_3\dcup K_3, K_3\dcup K_{1,3}, K_{1,3}\dcup K_{1,3}, C_4, \text{Campstool}, H\text{-tree}\}$.
\end{cor}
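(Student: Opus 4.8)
The plan is to observe that this corollary is precisely the logical negation, condition by condition, of the equivalences \eqref{2consecmultc1} $\Leftrightarrow$ \eqref{2consecmultc2} $\Leftrightarrow$ \eqref{2consecmultc3} already established in Theorem \ref{thm:twoconmulti}, so no genuinely new argument is required. Each of the three conditions appearing in the corollary is the negation of the corresponding condition in that theorem: the assertion that ``$G$ allows a matrix with two consecutive multiple eigenvalues and the SSP'' is exactly the negation of statement \eqref{2consecmultc1}; the assertion that ``$G$ allows a matrix with two consecutive multiple eigenvalues and the SMP'' is the negation of \eqref{2consecmultc2}; and the assertion that ``$G$ has a minor in the family $\mathcal{F}'_2$'' is the negation of \eqref{2consecmultc3}.

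First I would invoke Theorem \ref{thm:twoconmulti} to assert that statements \eqref{2consecmultc1}, \eqref{2consecmultc2}, and \eqref{2consecmultc3} are mutually equivalent. Since two propositions are equivalent exactly when their negations are equivalent, negating all three simultaneously produces the three conditions of the corollary, and their equivalence follows formally. The only bookkeeping to perform is matching each ``allows'' formulation in the corollary with the corresponding ``does not allow'' formulation in the theorem, after which the biconditional chain is immediate.

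I expect essentially no obstacle here: the substantive content---the forbidden-minor characterization and the structural description of part \eqref{2consecmultc4}---was already carried out in the proof of Theorem \ref{thm:twoconmulti}, which relies on the Minor Monotonicity Theorem (Theorem \ref{minormon}) together with Lemmas \ref{collision} and \ref{3sunlem}. The sole point requiring care is phrasing the negations so that they align with the theorem's statements; once that is done the corollary is a direct restatement.
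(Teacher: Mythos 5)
Your proposal is correct and matches the paper's intent exactly: the paper states this corollary immediately after Theorem \ref{thm:twoconmulti} with no separate proof, treating it as the contrapositive restatement of the equivalence \eqref{2consecmultc1} $\Leftrightarrow$ \eqref{2consecmultc2} $\Leftrightarrow$ \eqref{2consecmultc3}, which is precisely your argument.
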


If a graph $G$ does not allow a matrix with two consecutive multiple eigenvalues, then $G$ does not allow a matrix with the SSP and having two consecutive multiple eigenvalues.  Thus, we have the following corollary to Theorem \ref{thm:twoconmulti} and Lemma \ref{3sunlem}.

\begin{cor}
A connected graph $G$ does not allow a matrix with two consecutive multiple eigenvalues if and only if $G$ is a generalized star or a generalized $3$-sun.
\end{cor}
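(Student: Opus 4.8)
The plan is to derive this corollary directly from Theorem~\ref{thm:twoconmulti} together with Lemmas~\ref{collision} and~\ref{3sunlem}, handling the two implications separately. The pivotal observation is the one-line logical bridge recorded just above the statement: since every matrix with the SSP is in particular a matrix in $\mptn(G)$, if $G$ does not allow \emph{any} matrix with two consecutive multiple eigenvalues, then \emph{a fortiori} it does not allow an SSP matrix with two consecutive multiple eigenvalues. This is what lets the unrestricted hypothesis of the corollary feed into the SSP-flavored Theorem~\ref{thm:twoconmulti}.

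For the forward implication, I would assume $G$ is connected and allows no matrix with two consecutive multiple eigenvalues. By the bridge above, $G$ then satisfies condition~\eqref{2consecmultc1} of Theorem~\ref{thm:twoconmulti}, so by the equivalence \eqref{2consecmultc1} $\Leftrightarrow$ \eqref{2consecmultc4} it satisfies~\eqref{2consecmultc4}; that is, $G$ is a disjoint union of a graph $G_1$ (a generalized star or a generalized $3$-sun) with some number of paths. Since $G$ is connected, there are no extra path components and $G=G_1$, whence $G$ is a generalized star or a generalized $3$-sun.

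For the reverse implication, I would suppose $G$ is a generalized star or a generalized $3$-sun and show that no matrix in $\mptn(G)$ has two consecutive multiple eigenvalues. If $G$ is a generalized star this is exactly Lemma~\ref{collision}; if $G$ is a generalized $3$-sun it is exactly Lemma~\ref{3sunlem}. Crucially, both lemmas are statements about arbitrary matrices in $\mptn(G)$, not merely about matrices with a strong property, which is precisely the strength this corollary needs.

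The only point requiring care---and the nearest thing to an obstacle---is keeping straight the distinction between ``allows a matrix'' (quantified over all of $\mptn(G)$) and ``allows an SSP matrix,'' since Theorem~\ref{thm:twoconmulti} is phrased through the strong properties while the corollary concerns unrestricted matrices. The forward direction succeeds because it only needs the weaker all-matrix hypothesis to activate the SSP condition, and the reverse direction is not trapped by the SSP-only conclusion of the theorem precisely because Lemmas~\ref{collision} and~\ref{3sunlem} were established with no strong-property assumption whatsoever.
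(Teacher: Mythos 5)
Your proposal is correct and follows essentially the same route as the paper: the paper derives this corollary from Theorem~\ref{thm:twoconmulti} via exactly the bridge you identify (no matrix with consecutive multiple eigenvalues implies no SSP matrix with them), combined with Lemma~\ref{collision} and Lemma~\ref{3sunlem} for the converse. Your explicit handling of connectedness to rule out extra path components is the same (implicit) step the paper relies on, just spelled out more carefully.
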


\section{Minor monotonicity of the SSP and SMP}\label{sminormon}


It is known that the maximum nullity of a graph is not minor monotone (in fact, not even subgraph monotone \cite[Example 5.1]{BFH}). However, 
under the additional assumption of having the SAP  the maximum nullity is minor monotone; that is, if $G$ is a minor of the graph $H$
and there is a matrix in $\mathcal{S}(G)$ with nullity $k$ and the SAP, then there is a matrix in $\mathcal{S}(H)$ with nullity $k$ 
and the SAP \cite{BFH3}. 

In this section we study  minor monotonicity  with respect to spectra and multiplicity lists 
and the SSP and SMP properties.   Note that the monotonicity of SSP and SMP for $G$ a  subgraph of $H$ was established in \cite{genSAP}.  However,  contraction is more subtle.    When $G$ is obtained from $H$ by a contraction, it may be necessary to restrict the simple eigenvalue added to being  at one of the extreme ends of the spectrum.  For example, $C_4$ is a minor of $C_5$, $C_4$ realizes (2,2) with the SSP, but  $C_5$ cannot realize (2,1,2) by Lemma \ref{firstlast2}, yet $C_5$  can  realize (2,2,1).  
{Much of our discussion will focus on the SSP since the proofs for the SMP are analogous. }

\subsection{Tangent spaces of pertinent manifolds}

For an $n\times n$ symmetric matrix $M$, let $\vect(M)$ be the vector of dimension $\binom{n+1}{2}$ with entries from the upper triangular part of $M$; $\vect(M)$ is indexed by $(i,j)$ in  lexicographic order for $1\leq i\leq j\leq n$.  For a set $E$ of pairs $(i,j)$ with $1\leq i\leq j\leq n$, $\vect_E(M)$ is the subvector of dimension $|E|$ that contains only the entries corresponding to indices in $E$.
In the following, $E_{ij}$ denotes the $n\times n $ matrix with a $1$ in position $(i,j)$ and $0$ elsewhere, and
$K_{ij}$ denotes the $n\times n$ skew-symmetric matrix  $E_{ij}-E_{ji}$.

\begin{defn}\label{TSmatrices}  Let $M$ be an $n\times n$ symmetric matrix.  
\begin{enumerate}
\item The \emph{SSP tangent space matrix} $\TSS(M)$ of $M$ is the $\binom{n+1}{2}\times\binom{n}{2}$ matrix such that the $(k,\ell)$-column is $\vect(MK_{k\ell}+K_{\ell k}M)$ and the columns are indexed by $(k,\ell)$  in  lexicographic order for $1\leq k<\ell\leq n$.  
\item The \emph{SMP tangent space matrix} $\TSM(M)$ of $M$ is the $\binom{n+1}{2}\times\left(\binom{n}{2}+q\right)$ matrix obtained by augmenting $\TSS(M)$ with the $q$ columns $\vect(M^k)$ for $k=0,1,\ldots, q-1$ (where $q$ is the number of distinct eigenvalues of $M$).
\item The \emph{SAP tangent space matrix} $\TSA(M)$ of $M$ is the $\binom{n+1}{2}\times n^2$ matrix such that the $(k,\ell)$-column is $\vect(ME_{k\ell}+E_{\ell k}M)$ and the columns are indexed by $(k,\ell)$ with $1\leq k,\ell\leq n$ in the lexicographic order.
\end{enumerate}
\end{defn}

By \cite[Theorem~27]{genSAP}, the column spaces of the tangent space matrices $\TSS(A),\TSM(A),\TSA(A)$ are the corresponding tangent spaces of the manifolds $\mei_A,\mmu_A,\mrk_A$, respectively; we have followed the notation in \cite{genSAP}.

\begin{rem}
\label{spacetvl}
Let $A$ be a symmetric matrix and $\mei_A$ its isospectral manifold.  Let $V$ be a subspace of symmetric matrices.  Then $\mei_A$ and $V$  intersect transversally at $A$ if and only if the zero matrix is the only matrix $X$ such that $X\in V^{\perp}$ and $ \vect(X)\trans \TS(A)=\bzero\trans$.  In particular, if $G$ is the graph of $A$ and $V$ is the subspace of symmetric matrices whose nonzero entries correspond to only edges or diagonal entries, then we can see that $A$ has the SSP if and only if the rows of $\TSS(A)$ corresponding to nonedges are linearly independent.
\end{rem}






We now focus on the  tangent space to the isospectral manifold. 
We first compute  the tangent space matrix for a matrix of the form $A \oplus [\lambda]$, which is denoted by $A_{\lambda}$.
Given a matrix $M$, $\max(M)$ denotes the maximum absolute value of an entry of $M$, and 
 $\bm_j$ denotes the $j$th column of $M$.

\begin{lem}
\label{TS-sum}
Let $A$ be an $n\times n$ matrix and $\lambda$ be a real number.
Then,  after the columns indexed by $(i,n+1)$ have been permuted to the right and the rows indexed by $(i,n+1)$ have been permuted to the bottom, $\TS(A_{\lambda})$
has the form
 \[
\left( \begin{array}{c|c}
\TS(A) & O \\ \hline
O & A   -\lambda I_n 
\\ \hline
\begin{array}{ccc} 0 & \cdots & 0 \end{array} & \begin{array}{ccc} 0 & \cdots & 0 \end{array} 
 \end{array} 
\right).
\]
\end{lem}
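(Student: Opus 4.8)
The plan is to compute $\TS(A_{\lambda})$ one column at a time straight from Definition~\ref{TSmatrices}, using that $A_{\lambda}=A\oplus[\lambda]$ is block diagonal. Recall that the $(k,\ell)$-column of $\TS(M)$ is $\vect(MK_{k\ell}+K_{\ell k}M)$, and since $K_{\ell k}=E_{\ell k}-E_{k\ell}=-K_{k\ell}=K_{k\ell}^{\top}$, this is the vectorization of the commutator $MK_{k\ell}-K_{k\ell}M$, a symmetric matrix whose upper-triangular part is all that $\vect$ records. The first bookkeeping step is to describe the strips created by the stated permutation. The columns of $\TS(A_{\lambda})$ split into the \emph{old} indices $(k,\ell)$ with $1\le k<\ell\le n$ and the \emph{new} indices $(i,n+1)$ with $1\le i\le n$; the rows split into the old indices $(i,j)$ with $1\le i\le j\le n$, the new off-diagonal indices $(i,n+1)$ with $1\le i\le n$, and the single diagonal index $(n+1,n+1)$. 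These three row groups are the three horizontal strips of the target matrix, and the two column groups are its two vertical strips. Since permuting to the bottom/right preserves the relative lexicographic order within each group, the old rows inherit exactly the indexing of $\vect$ on $n\times n$ matrices and the new off-diagonal rows (and columns) inherit the order $(1,n+1),\dots,(n,n+1)$.

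Next I would dispatch the old columns. For $1\le k<\ell\le n$ the matrix $K_{k\ell}$ is supported entirely in the top-left $n\times n$ block, so writing $K_{k\ell}=\widetilde{K}_{k\ell}\oplus[0]$ and using block-diagonality of $A_{\lambda}$ gives $A_{\lambda}K_{k\ell}-K_{k\ell}A_{\lambda}=\bigl(A\widetilde{K}_{k\ell}-\widetilde{K}_{k\ell}A\bigr)\oplus[0]$. Reading off upper-triangular entries, the old rows reproduce precisely the $(k,\ell)$-column of $\TS(A)$, while every entry in a row indexed by some $(i,n+1)$ or by $(n+1,n+1)$ vanishes. This establishes the entire left column of blocks: $\TS(A)$ on top, and zeros in the middle and bottom strips.

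The computational heart is the new columns $(i,n+1)$ with $1\le i\le n$. Here I would expand $A_{\lambda}K_{i,n+1}-K_{i,n+1}A_{\lambda}$ with $K_{i,n+1}=E_{i,n+1}-E_{n+1,i}$ and evaluate its $(p,q)$-entry for $p\le q$, using $(A_{\lambda})_{p,n+1}=0$ for $p\le n$, $(A_{\lambda})_{n+1,n+1}=\lambda$, and $(A_{\lambda})_{pq}=A_{pq}$ for $p,q\le n$. The entries then sort into three cases: every old entry $(p,q)$ with $q\le n$ is $0$; the entry in row $(p,n+1)$ equals $A_{pi}-\lambda\,\delta_{pi}=(A-\lambda I)_{pi}$; and the $(n+1,n+1)$-entry is $0$. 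Letting $i$ index the new columns and $p$ the new off-diagonal rows, the middle strip of the right block is exactly $A-\lambda I_{n}$, the top strip is $O$, and the bottom strip is $\bzero$. Assembling the two column groups over the three row groups produces the displayed block form.

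I expect the only delicate point to be the entrywise case analysis of the commutator $A_{\lambda}K_{i,n+1}-K_{i,n+1}A_{\lambda}$ together with tracking which index $(p,q)$ lands in which strip after the permutation; everything else is a direct substitution licensed by block-diagonality. Because $\TSM$ augments $\TS$ only by the columns $\vect(A_{\lambda}^{k})$, and each such power is again block diagonal with last diagonal entry $\lambda^{k}$, the identical bookkeeping yields the analogous description of $\TSM(A_{\lambda})$ if it is needed, consistent with the remark that the SMP proofs run parallel to the SSP ones.
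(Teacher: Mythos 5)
Your proposal is correct and follows essentially the same route as the paper's proof: compute the commutators $A_{\lambda}K_{k\ell}-K_{k\ell}A_{\lambda}$ column by column, observing that for old indices $(k,\ell)$ with $k,\ell\le n$ the commutator is the direct sum of $AK_{k\ell}-K_{k\ell}A$ with a zero row and column, while for new indices $(i,n+1)$ it is a bordered matrix whose last column restricted to the first $n$ rows is $\ba_i-\lambda\be_i$, i.e., the $i$th column of $A-\lambda I_n$. Your additional bookkeeping about the permutation preserving lexicographic order within each index group, and the remark about $\TSM$, are fine but not needed beyond what the paper records.
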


\begin{proof}
The result follows from the following computations.
For $i,j \in \{1,2, \ldots, n\}$,   
 \[
A_{\lambda}K_{ij} - K_{ij}A_{\lambda}=\left( \begin{array}{c|c} 
AK_{ij} - K_{ij}A& \begin{array}{c} 0 \\ \vdots \\ 0 \end{array} \\  \hline
\begin{array}{ccc} 0 & \cdots & 0 \end{array}  &  0 \end{array} \right).
\]
For $i\in \{1,2,\ldots, n\}$,

 \[
A_{\lambda}K_{i,n+1} - K_{i,n+1}A_{\lambda}= \left(\renewcommand{\arraystretch}{1.3} \begin{array}{c|c}
O & \ba_i-\lambda \be_i  \\ \hline 
 \ba_i\trans-\lambda \be_i\trans   & 0 
\end{array} \right),
\]
 where $\ba_i$ and $\be_i$ denote  the $i$th column of $A$ and the $i$th standard basis vector, respectively.  \end{proof}
 
We now use Lemma \ref{TS-sum} to give a perturbation result for $A_{\lambda}$. Specifically, if $\lambda$ is large enough, you can perturb an SSP matrix $A_\lambda$  by any sufficiently small $E$ that is combinatorially orthogonal to $A_\lambda$, and find a correction matrix $F$ whose size is controlled such that $\G(F)$ is a subgraph of $\G(A_\lambda)$ and $\spec(A_\lambda+E+F)=\spec(A_\lambda)$.  This allows us to add edges between the isolated vertex represented by $\lambda$ and $\G(A)$ while preserving the spectrum and controlling the size of the modification.  As usual, $\sigma_k(M)$ denotes the smallest singular value of a $k\x k$  matrix $M$.

\begin{lem}
\label{epsilon-bound}
Let $A$ be an $n\times n$ symmetric matrix with the SSP and graph $G$. 
There exists a $\Delta>0$ such that for all  sufficiently small  $\epsilon >0$ and all  sufficiently large $\lambda$, 
the following holds:
\begin{itemize}
\item[] For each symmetric matrix $E$ of order $n+1$  such that $E \circ I=O$ and $E\circ A_{\lambda}=O$ 
and $\max(E)\leq \epsilon$, there there exists a symmetric $F=[f_{ij}]$  of order $n+1$
such that $f_{ij} \neq 0$ only if $i=j$ or $ij$ is an edge of $G$,  $\max (F) \leq \Delta \epsilon$, and $A_{\lambda}+E+F$ is cospectral with $A_{\lambda}$.
\end{itemize}
\end{lem}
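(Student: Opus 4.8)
The plan is to split the argument into an exact linear correction built directly from the block structure of Lemma \ref{TS-sum}, followed by a Newton-type nonlinear fixup, and then to verify that every constant produced is bounded independently of $\lambda$. Write $A_\lambda = A\oplus[\lambda]$, let $G=\G(A)$, let $W$ be the space of symmetric $(n+1)\times(n+1)$ matrices supported on the diagonal together with the edges of $G$ (the allowable support for $F$), and let $U$ be the space of symmetric matrices with zero diagonal supported on the nonedges of $\G(A_\lambda)$ (the allowable support for $E$); note $U\oplus W$ is the whole space of symmetric matrices. I decompose $E=\left(\begin{smallmatrix}E_A&\be\\ \be\trans&0\end{smallmatrix}\right)$ with $E_A$ supported on nonedges of $G$ and $\be\in\R^n$ arbitrary, and seek $F=\left(\begin{smallmatrix}F_A&0\\ 0&f\end{smallmatrix}\right)\in W$. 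Since $\mei_{A_\lambda}$ is the isospectral manifold, ``cospectral with $A_\lambda$'' means exactly ``lies on $\mei_{A_\lambda}$,'' so the goal is to move $A_\lambda+E$ back onto $\mei_{A_\lambda}$ along $W$-directions.

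First I would carry out the linear correction. At first order I look for $F_0\in W$ and a skew-symmetric $K$ with $E+F_0=[A_\lambda,K]$, i.e.\ with $E+F_0$ tangent to $\mei_{A_\lambda}$. Writing $K=K^{(1)}+K^{(2)}$ for the part among the first $n$ indices and the part in row/column $n+1$, Lemma \ref{TS-sum} shows $[A_\lambda,K]$ decouples: its leading $n\times n$ block is $[A,K^{(1)}]$ and its $(i,n+1)$ entries are $(A-\lambda I)\bv$, where $\bv$ collects the entries $K_{i,n+1}$. I solve the nonedge equations separately. On the nonedges inside $G$ I solve $[A,K^{(1)}]_{ij}=(E_A)_{ij}$; this is the system whose coefficient matrix is the nonedge-row submatrix $R$ of $\TS(A)$, whose rows are linearly independent because $A$ has the SSP (Remark \ref{spacetvl}), so there is a solution with $\|K^{(1)}\|\le \|E_A\|/\sigma_0$, where $\sigma_0:=\sigma_{|U|}(R)>0$ depends only on $A$. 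On the nonedges $(i,n+1)$ I set $\bv=(A-\lambda I)^{-1}\be$, which exists for $\lambda$ large and obeys $\|\bv\|\le\|\be\|/(\lambda-\rho(A))$. Taking $F_0=[A_\lambda,K]-E$ (which lies in $W$ since its off-diagonal nonedge entries vanish by construction), the decisive point is that the $(i,n+1)$ block of $[A_\lambda,K]$ is \emph{by design} exactly $\be$, of norm at most $\|\be\|$ with no factor of $\lambda$ inside it, while $[A,K^{(1)}]$ has norm at most $2\|A\|\,\|E_A\|/\sigma_0$. Hence $\max(F_0)\le \Delta_0\,\epsilon$ with $\Delta_0$ depending only on $A$ and $n$, uniformly in $\lambda$.

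Next I would run the nonlinear step. After the linear correction, $A_\lambda+E+F_0$ deviates from $\mei_{A_\lambda}$ only by the second-order curvature of the conjugation parametrization $K\mapsto e^{K}A_\lambda e^{-K}$, whose quadratic term is $\tfrac12[K,[K,A_\lambda]]$. This is where the large entry $\lambda$ could in principle appear, and controlling it is the crux. The resolution is that the only large piece of $A_\lambda$ is its $(n+1,n+1)$ entry, and it is paired with $K^{(2)}$, which is correspondingly tiny: since $\bv=(A-\lambda I)^{-1}\be$ we have $\|K^{(2)}\|\le \epsilon/(\lambda-\rho(A))$ while $\|[K^{(2)},A_\lambda]\|=\|\be\|\le\epsilon$, so each commutator monomial in the quadratic term is bounded by a $\lambda$-independent multiple of $\epsilon^2$ (the growth in the entries of $A_\lambda$ cancels against the $1/\lambda$ smallness of $K^{(2)}$). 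With a uniform $O(\epsilon^2)$ curvature bound and the uniform linear-correction constant $\Delta_0$ in hand, a standard quantitative implicit function theorem (equivalently a Newton iteration) produces symmetric corrections $F_0,F_1,F_2,\dots\in W$ whose norms decay geometrically; their sum $F=\sum_j F_j\in W$ satisfies $\max(F)\le\Delta\,\epsilon$ for a suitable $\Delta$ depending only on $A$ and $n$, and $A_\lambda+E+F\in\mei_{A_\lambda}$, i.e.\ is cospectral with $A_\lambda$.

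The main obstacle, and the only place the hypothesis ``$\lambda$ sufficiently large'' does real work, is guaranteeing that both $\Delta$ and the admissible range of $\epsilon$ can be taken independent of $\lambda$. Both the linear bound and the curvature bound survive $\lambda\to\infty$ precisely because of the block-diagonal form in Lemma \ref{TS-sum}: the $\lambda$-free block is governed by the SSP of $A$ through $\sigma_0$, while the $A-\lambda I$ block becomes ever more strongly nonsingular, so the components of $K$ it controls shrink like $1/\lambda$ and exactly absorb the growth of the entries of $A_\lambda$. The one routine check I would still need to make careful is that the Newton iteration's convergence threshold for $\epsilon$ likewise depends only on $\sigma_0$, $\|A\|$, and $n$, so that a single $\Delta$ serves for all sufficiently large $\lambda$.
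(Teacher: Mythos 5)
Your proposal is correct and follows essentially the same route as the paper: the paper likewise reduces everything to the linear independence of the nonedge rows of $\TS(A_{\lambda})$ (coming from the SSP of $A$ together with the block structure of Lemma \ref{TS-sum}, in which the $(i,n+1)$ rows form the well-conditioned block $A-\lambda I$) and then invokes a quantitative form of the Implicit Function Theorem to produce $F$ with a uniform constant $\Delta$. Your explicit Newton iteration --- in particular the observation that the $(n+1)$-components of the correcting rotation are damped by $(A-\lambda I)^{-1}$, so that $\|[K,A_{\lambda}]\|=O(\epsilon)$ and all curvature terms stay $O(\epsilon^{2})$ uniformly in $\lambda$ --- is precisely the content compressed into the paper's appeal to ``a different form of the Implicit Function Theorem'' guaranteeing $\lambda$-independent constants, which the paper asserts without spelling out.
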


\begin{proof}
For $\lambda $ sufficiently large,  $\lambda$ is not an eigenvalue of $A$, so $A_{\lambda}$ has the SSP, and  $\G(A_{\lambda})=G\dcup K_1$. 
Let $\tau$ be the indices of the rows of $\TS(A_{\lambda})$ corresponding to 
$(i,j)$ where $i\neq j$ and $ij$ is not an edge of $\G(A_{\lambda})$.
Since $A_{\lambda}$ has the SSP, the rows of $\TS(A_{\lambda})$ corresponding to $\tau$ are linearly independent
and  there exists  an  invertible $|\tau|\x |\tau|$ submatrix $\TS(A_{\lambda})[\tau, \mu]$ of  $\TS(A_{\lambda})$.  

We are building toward a proof that the SSP is preserved for decontractions.  The proof that the SSP is preserved for supergraphs \cite[Theorem~10]{genSAP} makes use of the Implicit Function Theorem.  For this result, we need a different form of the Implicit Function Theorem, which uses the invertibility of $\TS(A_{\lambda})[\tau, \mu]$ to guarantee uniform continuity within some neighborhood.  This yields, for $\epsilon>0$ sufficiently small and a given $E$ with $\max(E)\leq\epsilon$, a positive $\Delta$ (independent of $\epsilon$) and $F$ satisfying the conditions stated in the lemma, for which $A_\lambda+E+F$ cospectral with $A_\lambda$.
\end{proof}


Next we compute the tangent space of a special type of perturbation of $A_{\lambda}$. 

\begin{lem}
\label{TSper}
Given $A_\lambda$ and a matrix $L$ with $\max(L)\leq 1$, suppose 
\[ C=A_{\lambda} + \left(\renewcommand{\arraystretch}{1.3} \begin{array}{c|c} O & \bx \\ \hline \bx\trans &  0\end{array} \right) + O(\epsilon)L. \]
Then, after permuting as in Lemma \ref{TS-sum}, $\TS(C)$ 
has the form
\[
\left(\renewcommand{\arraystretch}{1.3}
\begin{array}{c|c}
\TS({A}) & V_1\\ \hline
V_2 &  A- \lambda I + V_3\\ \hline
\bzero\trans & 2\bx\trans 
\end{array} 
\right) +O(\epsilon)\TS(L)=\left(\renewcommand{\arraystretch}{1.3}\begin{array}{c|c}
\TS({A}) & V_1\\ \hline
V_2 &  A- \lambda I + V_3\\ \hline
\bzero\trans & 2\bx\trans 
\end{array} 
\right) +O(\epsilon) M
\]
where
$\max (V_1), \max (V_2), \max (V_3)$ are all at most $ 2 \max (\bx) $, and $M$ is a fixed matrix.
\end{lem}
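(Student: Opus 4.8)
The plan is to reduce everything to the computation already carried out in Lemma \ref{TS-sum}, using the fact that the SSP tangent-space operator $M\mapsto\TS(M)$ is \emph{linear} in $M$. Indeed, the $(k,\ell)$-column of $\TS(M)$ is $\vect(MK_{k\ell}-K_{k\ell}M)$, which is linear in $M$ since the commutator $[\,\cdot\,,K_{k\ell}]$ and $\vect$ are both linear. Writing the bordered perturbation as
\[
\TS(C)=\TS(A_\lambda)+\TS(P)+O(\epsilon)\TS(L),\qquad
P=\left(\begin{array}{c|c} O & \bx \\ \hline \bx\trans & 0\end{array}\right),
\]
the first summand is supplied verbatim by Lemma \ref{TS-sum} (after the same permutation of rows and columns): it gives $\TS(A)$ in the top-left block, $A-\lambda I_n$ in the middle-right block, and zeros elsewhere, including the entire bottom row. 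So it remains only to compute $\TS(P)$ and to bound the error term $O(\epsilon)\TS(L)$.

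For $\TS(P)$ I would evaluate the two relevant commutators directly. For a ``left'' column $(k,\ell)$ with $k<\ell\le n$, a short calculation shows $PK_{k\ell}-K_{k\ell}P$ has as its only nonzero entries $-x_\ell$ in positions $(k,n+1),(n+1,k)$ and $x_k$ in positions $(\ell,n+1),(n+1,\ell)$; in particular its top-left $n\times n$ block and its $(n+1,n+1)$ entry vanish. Under the permutation of Lemma \ref{TS-sum} these entries land only in the middle rows indexed by $(i,n+1)$, assembling into $V_2$, and contribute nothing to the top block or to the bottom-left, which explains the $\bzero\trans$ there. For a ``right'' column $(k,n+1)$, computing $PK_{k,n+1}-K_{k,n+1}P$ yields top-left block $-(\bx\be_k\trans+\be_k\bx\trans)$, zero entries in all positions $(i,n+1)$ with $i\le n$, and $(n+1,n+1)$-entry equal to $2x_k$. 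The top-left blocks assemble into $V_1$; the middle-right contribution from $P$ is identically $O$, so one may take $V_3=O$ (and the stated bound on $V_3$ then holds trivially); and the $2x_k$ entries assemble into the bottom-right row $2\bx\trans$, exactly as claimed. Since every entry of $V_1$ is of the form $-x_i$ or $-2x_k$ and every entry of $V_2$ is $\pm x_i$, the bounds $\max(V_1),\max(V_2),\max(V_3)\le 2\max(\bx)$ follow at once.

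Finally, the error is controlled because $\max(L)\le 1$: each entry of $LK_{k\ell}-K_{k\ell}L$ is a sum of at most two entries of $L$, so $\max\big(\TS(L)\big)\le 2\max(L)\le 2$, and $O(\epsilon)\TS(L)=O(\epsilon)M$ with $M=\TS(L)$ having uniformly bounded entries. Adding the three pieces block by block gives the displayed form of $\TS(C)$. The only real difficulty here is bookkeeping: one must track carefully how each matrix position $(a,b)$ of the two commutators is routed, under the row/column permutation of Lemma \ref{TS-sum}, into the top/middle/bottom row blocks and the left/right column blocks, and keep the signs straight. No new idea is needed beyond the linearity of $\TS$ and the prior evaluation of $\TS(A_\lambda)$; the care lies in verifying the signs and in confirming that the middle-right contribution of $P$ vanishes while the stated uniform bound on all three $V_i$ still holds.
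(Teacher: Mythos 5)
Your proposal is correct and follows essentially the same route as the paper's proof: decompose $\TS(C)=\TS(A_\lambda)+\TS(P)+O(\epsilon)\TS(L)$ by linearity, invoke Lemma \ref{TS-sum} for the first term, and compute the two commutators $PK_{k\ell}+K_{\ell k}P$ and $PK_{k,n+1}+K_{n+1,k}P$, whose entries you identify exactly as the paper does. Your explicit routing of entries into the blocks $V_1$, $V_2$, $V_3=O$, and the bottom row $2\bx\trans$, together with the bound on $\TS(L)$, simply makes precise the bookkeeping the paper leaves implicit after stating those same computations.
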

 
\begin{proof}
The form of $C$ implies 
\[ \TS(C) = \TS(A_{\lambda}) + \TS \left( \renewcommand{\arraystretch}{1.3}
\left( \begin{array}{c|c} O & {\bf x} \\ \hline \mathbf{x}\trans &  0\end{array} \right) \right) 
+ O(\epsilon)\TS(L).
\]
The result then follows from the following computations.
For $i,j \in \{1,2, \ldots, n\}$, 
\[  \left( \renewcommand{\arraystretch}{1.3}\begin{array}{c|c} O & \bx \\ \hline \bx\trans &  0\end{array} \right)K_{ij}
+ K_{ji}\left(\renewcommand{\arraystretch}{1.3} \begin{array}{c|c} O & \bx \\ \hline \bx\trans &  0\end{array} \right)
= \left(\renewcommand{\arraystretch}{1.3} \begin{array}{c|c}
O & x_i{\be_j}-x_j {\be_i}\\ \hline 
 x_i{\be_j}\trans-x_j {\be_i}\trans & 0 \end{array} \right)
\!.\]
For $i\in \{1,2,\ldots, n\}$, 
\[ \left(\renewcommand{\arraystretch}{1.3} \begin{array}{c|c} O & \mathbf{x} \\ \hline \mathbf{x}\trans &  0\end{array} \right)K_{i,n+1} + 
K_{n+1,i}\left(\renewcommand{\arraystretch}{1.3} \begin{array}{c|c} O & \mathbf{ x} \\ \hline \mathbf{x}\trans &  0\end{array} \right)= {\left(\renewcommand{\arraystretch}{1.3} \begin{array}{c|c} -{\be_i}\bx\trans -\bx{\be_i}\trans & O \\ \hline 
O & 2x_i \end{array} \right)\!.} \qedhere
\]
\end{proof}

Although it is not part of the statement of the next result, we are thinking of a graph $G$ of order $n$  being used to create a graph $H$ of order $n+1$ with the partition of   the neighborhood of $n$ in $G$, $N_G(n)$, into $\alpha$ and $\beta$ representing sorting the neighbors of $n$ in $G$ into those incident with {$n$ in $\alpha$ versus those incident with $n+1$ in $\beta$}. The existence of  a matrix of  the special form described in Theorem~\ref{cor:form} allows this operation while preserving the spectrum.   

\begin{thm}
\label{cor:form}
Let $A$ be an $n \times n$ SSP matrix with graph $G$, $\lam$ {sufficiently large}, 
and $\alpha\dcup \beta$ be a partition of  $N_G(n)$.
Suppose that for a matrix $L$ with $\max(L)\leq 1$, there is a symmetric $(n+1)\x (n+1)$ matrix 
\[ C=A_{\lambda} + \left(\renewcommand{\arraystretch}{1.3} \begin{array}{c|c} O & \mathbf{x} \\ \hline \mathbf{x}\trans &  0\end{array} \right)+O(\epsilon)L,\]
such that {$\spec(C)=\spec(A_{\lam})$,} 
\[ {   \begin{array}{ll} c_{n,j}=c_{n+1,j}= 0 & \mbox{for $j \not\in N_G[n] \cup \{ n+1\}$}, \\
  c_{n,j}=c_{n+1,j}  & \mbox{for  $j\in \alpha\cup\{n\}$}, \\
   c_{n,j}=-c_{n+1,j} & \mbox{for $j\in \beta$,}\\
    c_{n,n}\ne c_{n+1,n+1}, & 
 
    \end{array} }  \]
 and $C(n+1)$ has graph $G$.  Let  $V$ denote the span of 
the matrices  
\[ 
\begin{array}{cl}
E_{ii}& \mbox{ for }i=1,2,\ldots, n+1,\\ 
 E_{ij}+ E_{ji} &  \mbox{ for $ij$ an edge of $G$ not incident to $n$},\\
  E_{nj}+ E_{jn} - E_{n+1,j}-E_{j,n+1} & \mbox{ for }j\in \alpha\cup \{n\},\mbox{ and}\\
  E_{nj}+E_{jn}+ E_{n+1,j}+ E_{j,n+1}  &\mbox{ for }j\in \beta.
\end{array} 
\]
Then  $\mei_C$ intersects $V$ transversely.
 Moreover, the matrix $RCR\trans$, where 
 \[
 R=I_{n-1} \oplus \left( \begin{array}{rr} \sqrt{2}/2 & \sqrt{2}/{2} \\
 -\sqrt{2}/{2} & \sqrt{2}/{2} \end{array} \right), 
 \]
 is cospectral with $A_{\lambda}$, has the SSP, and its graph  $H$ is the graph obtained from $G$ by inserting a new vertex $n+1$, and edge between $n$ and $n+1$, and 
for  $j\in {\beta}$ replacing the edge $nj$ in $G$ with   the edge joining $j$ and $n+1$. \end{thm}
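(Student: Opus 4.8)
\textbf{Proof strategy for Theorem~\ref{cor:form}.}
The plan is to combine the tangent-space computation of Lemma~\ref{TSper} with a transversality criterion (Remark~\ref{spacetvl}) to establish that $\mei_C$ meets $V$ transversely, and then to recognize $RCR\trans$ as the desired decontracted matrix. First I would verify that $C$ has the form required by Lemma~\ref{TSper}, so that $\TS(C)$ has the block structure displayed there, with off-diagonal perturbation blocks $V_1,V_2,V_3$ controlled in size by $\max(\bx)$ plus an $O(\epsilon)$ term. The key point is that for $\lambda$ sufficiently large the bottom-right block $A-\lambda I+V_3$ is invertible and dominates, so the structure of $\TS(A_\lambda)$ from Lemma~\ref{TS-sum} is preserved under the perturbation. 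Using that $A$ has the SSP, the relevant rows of $\TS(A)$ are linearly independent (Remark~\ref{spacetvl}), and I would argue that for small $\epsilon$ and large $\lambda$ this independence persists for the rows of $\TS(C)$ indexed by the nonedges of $H$ that correspond to the subspace $V^\perp$. By the transversality reformulation in Remark~\ref{spacetvl}, showing that no nonzero $X\in V^\perp$ satisfies $\vect(X)\trans\TS(C)=\bzero\trans$ is precisely the statement that $\mei_C$ intersects $V$ transversely.

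The heart of the argument is the symmetry adapted to the partition $\alpha\dcup\beta$. The subspace $V$ is spanned by diagonal matrices, edge matrices of $G$ away from $n$, and the two symmetric/antisymmetric combinations $E_{nj}+E_{jn}\mp(E_{n+1,j}+E_{j,n+1})$ for $j\in\alpha$ and $j\in\beta$. I would exploit the conjugation by $R$, which acts as a $45^\circ$ rotation in the last two coordinates: the conditions $c_{n,j}=c_{n+1,j}$ for $j\in\alpha\cup\{n\}$ and $c_{n,j}=-c_{n+1,j}$ for $j\in\beta$ are exactly the conditions guaranteeing that $RCR\trans$ sends all of column $j$'s weight onto vertex $n$ (for $j\in\alpha$) or onto vertex $n+1$ (for $j\in\beta$), while $c_{n,n}\ne c_{n+1,n+1}$ ensures the rotation does not create an unwanted symmetry forcing extra zeros. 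Since $R$ is orthogonal, $RCR\trans$ is cospectral with $C$, hence with $A_\lambda$; and because conjugation by $R$ carries the subspace $V$ to the standard coordinate subspace of symmetric matrices supported on the diagonal and on the edges of $H$, transversality of $\mei_C$ with $V$ transfers to transversality of $\mei_{RCR\trans}$ with that standard subspace, which by Remark~\ref{spacetvl} is exactly the SSP for $RCR\trans$.

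Finally I would read off the graph of $RCR\trans$. The support conditions on $C$, rotated by $R$, show that for $j\in\alpha\cup\{n\}$ the $(n,j)$ entry survives and the $(n+1,j)$ entry vanishes, whereas for $j\in\beta$ the reverse happens, so the neighbors in $\beta$ become attached to the new vertex $n+1$ rather than to $n$; together with the edge $n(n+1)$ produced by $c_{n,n}\ne c_{n+1,n+1}$, this yields precisely the graph $H$ described in the statement. I expect the main obstacle to be the quantitative control in the transversality step: one must show that the perturbation blocks $V_1,V_2,V_3$ together with the $O(\epsilon)$ term in Lemma~\ref{TSper} do not destroy the linear independence of the nonedge rows inherited from the SSP of $A$. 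This is where the assumption that $\lambda$ is sufficiently large is essential, since it makes $A-\lambda I$ invertible with a lower bound on its smallest singular value $\sigma_n$, allowing a Schur-complement or block-elimination estimate to confirm that the pertinent square submatrix of $\TS(C)$ stays invertible for all small $\epsilon$.
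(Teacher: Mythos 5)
Your overall route is the same as the paper's: use Lemma \ref{TSper} to obtain the block form of $\TS(C)$, use the SSP of $A$ together with the domination of the block $A-\lambda I$ for $\lambda$ large to get a linear-independence statement, and then transfer the resulting transversality through the rotation $R$ (which indeed carries $V$ onto the coordinate subspace of symmetric matrices supported on the diagonal and the edges of $H$) to conclude cospectrality, the graph $H$, and the SSP for $RCR\trans$. Your cospectrality argument and your reading of the graph of $RCR\trans$ from the sign conditions on rows $n$ and $n+1$ of $C$ are exactly the paper's.

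There is, however, a genuine gap in the middle step, where you reduce transversality of $\mei_C$ with $V$ to ``linear independence of the rows of $\TS(C)$ indexed by the nonedges of $H$.'' That reduction is not valid, because $V$ is not a coordinate subspace: its orthogonal complement $V^\perp$ consists of symmetric matrices whose entries in positions $(n,j)$ and $(n+1,j)$ are \emph{linked} ($x_{nj}=x_{n+1,j}$ for $j\in\alpha\cup\{n\}$ and $x_{nj}=-x_{n+1,j}$ for $j\in\beta$), not matrices supported on a prescribed set of positions. Consequently, the correct criterion you state---that no nonzero $X\in V^\perp$ satisfies $\vect(X)\trans\TS(C)=\bzero\trans$---is equivalent to the linear independence of a family of vectors containing \emph{sums and differences} of the rows of $\TS(C)$ indexed by $(j,n)$ and $(j,n+1)$, together with the rows indexed by nonedges of $G$; this is precisely why the paper forms the row-operated matrix $\widehat{C}$ (replacing row $(j,n+1)$ by row $(j,n+1)$ plus row $(j,n)$ for $j\in\alpha\cup\{n\}$ and by the difference for $j\in\beta$) before invoking the block estimate. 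Your literal row set cannot work for two reasons: first, for $j\in\beta$ the pair $(n,j)$ is a nonedge of $H$ but an \emph{edge} of $G$, so the corresponding row of $\TS(A)$ is one about which the SSP of $A$ gives no independence information, and the large-$\lambda$ argument does not close; second, independence of the nonedge rows of $H$ would prove transversality of $\mei_C$ with the coordinate subspace attached to $H$, which is a different subspace from $V$ and is not the one that conjugation by the rotation matches up with the SSP of the rotated matrix. The choice of signs in the row combinations (sum on $\alpha$, difference on $\beta$, or the reverse) is forced by the requirement that the subspace against which transversality is proved be carried by the conjugation onto the coordinate subspace of $H$; this sign bookkeeping is the only delicate point of the whole proof, and it is exactly the point your sketch elides.
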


\begin{proof}  We consider a vector $\bw$ in the left-nullspace of $\TS(C)$, and for convenience 
index its elements by $(i,j)$ in lexicographic order with $1\leq i \leq j \leq n+1$. 
 The claimed transversality follows by showing that  
 if $\bw_{(i,i)}=0$  for all $i$,  $\bw_{(i,j)}=0$ when $ij$ is an edge of $G$ not incident to $n$, 
 $\bw_{(j,n)} =\bw_{(j,n+1)}$ for $j\in \alpha\,{\cup \{n\}}$, and $\bw_{(j,n)} =-\bw_{(j,n+1)}$ for  $j\in \beta$, then $\bw$ is the zero vector.
 
 Let $\widehat{C}$ be the matrix obtained from $\TSS(C)$ by replacing the row indexed by 
 $(j,n+1)$ by the sum of the rows indexed by $(j,n+1)$ and $(j,n)$ for $j\in \alpha\,{\cup \{n\}}$,
 and the difference of the rows for $j\in \beta$.  
 Note that the desired  transversality is implied by the linearly independence of rows of $\widehat{C}$ 
 indexed by $\gamma=\{ (i,j) : 1\leq i< j\leq n+1\mbox{  and } ij \mbox{ not an edge of }G\}$ and the rows indexed by {$(j,n+1)$ for $j=1, \ldots , n$}.
By Lemma~\ref{TSper}, after the usual permutation, the matrix formed by these rows has the form 
\beq \left(\renewcommand{\arraystretch}{1.3} \begin{array}{c|c}
\TS(A)[\gamma,:] & V_1[\gamma,:]  \\ \hline
{V_2'}& {A-\lambda I_n+ V_3'} \end{array} 
\right) + O(\epsilon)M
\label{TSeq}
\eeq
for  a fixed matrix $M$. 
Since $\lambda$ is significantly larger than any entry of $V_1$, $V_2'$, and $V_3'$, and 
the corresponding rows of $\TS(A)$ are linearly independent by the fact that $A$ has the SSP, the matrix in \eqref{TSeq} has  linearly independent rows.

Note that $RCR\trans$ is the matrix obtained from $C$ by performing a Givens rotation of $\pi/4$ 
on the last two rows and columns of $C$.  Hence the submatrices of $RCR\trans$ and $C$ lying in the first $n-1$ rows and columns agree.
The hypothesis on the last two rows of $C$ imply that $RCR\trans$ has the given graph.

Now suppose that $X$ is an $(n+1) \times (n+1)$ matrix 
with $X\circ I=O$,  $X\circ RCR\trans=O$, and $[X,RCR\trans]=O$. Note in particular, $X[ \{n,n+1\}, \{n, n+1\}]=O$ since $(RCR\trans)_{n,n+1}\ne 0$.
It follows that $RXR\trans $ is in the  orthogonal complement of ${V}$ in the space of symmetric matrices.  As ${V}$ intersects $\mei_C$ transversely, we conclude that $RXR\trans=O$.  Thus, $X=O$, and we conclude  that $RCR\trans$ has the SSP.
\end{proof}

\subsection{Proof of minor monotonicity for SSP}

In this section we prove that if $A$ has the SSP and $\lambda$ is sufficiently large, 
then there exists a matrix $C$ satisfying the hypothesis of Theorem~\ref{cor:form}.  We begin with a needed perturbation result.
\begin{lem}
\label{bound}
Let $M$ be a positive definite $n\times n$ matrix with smallest eigenvalue  $\lambda_{\min}(M)$ and $\mathbf{b}$ be an $n$-vector.
If $\mathbf{x}$ is the solution to $M\mathbf{x}=\mathbf{b}$, then 
\[\max(\mathbf{x}) \leq \frac{\sqrt{n}}{\lambda_{\min}(M) }\cdot \max (\mathbf{b}). \]
\end{lem}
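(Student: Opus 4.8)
The plan is to reduce the entrywise estimate to a short chain of standard norm inequalities, using that $\max(\cdot)$ applied to a vector is exactly the $\ell_\infty$-norm. Since $M$ is positive definite it is in particular invertible, so the equation $M\mathbf{x}=\mathbf{b}$ has the unique solution $\mathbf{x}=M^{-1}\mathbf{b}$, and I would bound $\max(\mathbf{x})=\|\mathbf{x}\|_\infty$ by passing through the Euclidean norm.

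First I would invoke the elementary inequality $\max(\mathbf{x})=\|\mathbf{x}\|_\infty\le\|\mathbf{x}\|_2$, valid for every vector. Then, writing $\mathbf{x}=M^{-1}\mathbf{b}$ and using submultiplicativity of the operator $2$-norm, I obtain $\|\mathbf{x}\|_2\le\|M^{-1}\|_2\,\|\mathbf{b}\|_2$.

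The key algebraic input is to identify $\|M^{-1}\|_2$. Because $M$ is symmetric and positive definite, the spectral theorem diagonalizes it with strictly positive eigenvalues; hence $M^{-1}$ is again symmetric positive definite, with eigenvalues the reciprocals of those of $M$, so its largest eigenvalue — and therefore its spectral norm — equals $1/\lambda_{\min}(M)$. Finally I would convert back from the Euclidean norm of $\mathbf{b}$ to its $\ell_\infty$-norm via $\|\mathbf{b}\|_2\le\sqrt{n}\,\|\mathbf{b}\|_\infty=\sqrt{n}\,\max(\mathbf{b})$, since an $n$-vector has at most $n$ entries, each of absolute value at most $\max(\mathbf{b})$. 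Chaining these estimates yields
\[\max(\mathbf{x})\le\|\mathbf{x}\|_2\le\frac{1}{\lambda_{\min}(M)}\,\|\mathbf{b}\|_2\le\frac{\sqrt{n}}{\lambda_{\min}(M)}\,\max(\mathbf{b}),\]
which is precisely the claim.

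There is essentially no serious obstacle here; the only point requiring genuine care is the identification $\|M^{-1}\|_2=1/\lambda_{\min}(M)$, which uses the full symmetry-and-positive-definiteness hypothesis rather than mere invertibility. The two comparisons between the $\ell_\infty$- and $\ell_2$-norms account for the respective factors: the lossless direction is applied to $\mathbf{x}$ (giving the leading constant $1$), and the $\sqrt{n}$-lossy direction is applied to $\mathbf{b}$ (giving the $\sqrt{n}$ in the numerator).
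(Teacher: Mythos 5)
Your proof is correct and follows essentially the same route as the paper's: both arguments rest on the identification $\lambda_{\max}(M^{-1})=1/\lambda_{\min}(M)$ together with the comparison $\|\mathbf{b}\|_2\le\sqrt{n}\,\max(\mathbf{b})$. The only cosmetic difference is that the paper bounds each entry $|x_j|$ directly by Cauchy--Schwarz against the $j$th column of $M^{-1}$, whereas you bound $\|\mathbf{x}\|_\infty\le\|\mathbf{x}\|_2$ and then use submultiplicativity of the operator norm; the two chains of inequalities produce the identical constant.
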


\begin{proof}
Let $\bm_j$ denote the $j$th column of $M^{-1}$, and observe that $\|\bm_j\| \le \lam_{\max}(M^{-1})=\frac{1}{\lambda_{\min}(M)}$.  Since $\bx=M^{-1}\bb$,   the Cauchy-Schwarz inequality yields
\begin{eqnarray*}
|x_j|& \le & 
\| \bm_j \| \cdot \|   \bb \|\\
& \leq  &\frac{1}{\lambda_{\min}(M)} \cdot \| \mathbf{b}\| \\ 
& \leq  &\frac{1}{ \lambda_{\min}(M)} \cdot   \sqrt{n} \cdot \max(\mathbf{b}). \qquad\qquad\qedhere
\end{eqnarray*}
\end{proof}


\begin{prop}
\label{tech}
Let $A$ be a symmetric $n\times n$ matrix with graph $G$.   Let $\epsilon>0$ with $\epsilon\le\max(A)$, $\Delta>0$, and $\lambda \geq
  \max \left\{ 2\rho(A), \frac{4n (\max (A)(1+ \Delta))^2}{\epsilon^2} \right\}$.
Let $B= A_\lambda+ E + F$ be an $(n+1)\times (n+1)$ matrix  such that 
\begin{itemize} 
\item[(a)] $E=[e_{ij}]$ is a symmetric matrix with $\max (E) \leq \epsilon$, $E\circ A_\lambda=O$, and $E\circ I=O$; and 
\item[(b)] $F=[f_{ij}]$ is a symmetric matrix with $\max(F)\leq \Delta \max(E)$ and  $f_{ij} \neq 0$ only if $ij$ is an edge of $G$ or $i=j$. 
\end{itemize}
Let $\alpha \dcup \beta$ be a partition of $N_G(n)$. 
Then there is an orthogonal matrix $Q$ 
  such that 
\begin{itemize}
\item[(c)]
for all $i$ and $j$ except $\{i,j\} \in \{ \{n+1,k\}: k \in N_G[n] \}$, the $ij$-entry of $Q\trans BQ-B$ has absolute value at most $O( \epsilon^2)$.
\item[(d)] For each $j$ in $\alpha\cup\{n\}$,  $(Q\trans BQ)_{n+1,j}-(Q\trans BQ)_{nj}- e_{n+1,j} $ has absolute value at most $ O( \epsilon^2)$, and 
\item[(e)]
for each $j$ in $\beta$, $(Q\trans BQ)_{n+1,j}+(Q\trans BQ)_{nj} - e_{n+1,j} $ has absolute value at most $O( \epsilon^2)$.
\end{itemize}
\end{prop}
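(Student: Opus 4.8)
The plan is to realize $Q$ as an explicit near-identity rotation tuned to the dominant eigenvalue $\lambda$, so that conjugation changes $B$ by $O(\epsilon^2)$ everywhere except in the couplings between the new vertex $n+1$ and $N_G[n]$. First I would record the block structure of $B$. Since $E\circ I=O$ and $E\circ A_\lambda=O$ while $F$ is supported on the edges of $G$ and the diagonal, the vertex $n+1$ is isolated in $A_\lambda$, so
\[
B=\begin{pmatrix} \wt A & \bb \\ \bb\trans & \wt\lambda\end{pmatrix},\qquad \wt A=A+E''+F',\quad b_{j,n+1}=e_{j,n+1},\quad \wt\lambda=\lambda+f_{n+1,n+1},
\]
where $E'',F'$ are the leading $n\times n$ blocks of $E,F$; thus $\max(\bb)\le\epsilon$, every entry of $\wt A$ is $O(1)$, and $\wt\lambda\ge\lambda-\Delta\epsilon>0$ is comparable to $\lambda$. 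I then set $\sigma_j=+1$ for $j\in\alpha\cup\{n\}$ and $\sigma_j=-1$ for $j\in\beta$, and let $\bw$ be the $(n+1)$-vector \emph{supported on $N_G[n]$} with $w_j=\sigma_j b_{nj}$ for $j\in N_G[n]$ and $w_j=0$ otherwise (so $\max(\bw)\le\max(A)(1+\Delta)$). Finally I put
\[
S=\frac{1}{\wt\lambda}\bigl(\be_{n+1}\bw\trans-\bw\be_{n+1}\trans\bigr),\qquad Q=\exp(S).
\]
Since $S$ is skew-symmetric, $Q$ is orthogonal; concretely it is the rotation through the small angle $\|\bw\|/\wt\lambda=O(1/\lambda)$ in the plane $\Span(\be_{n+1},\bw)$ and the identity on the complement. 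Because $\bw$ lives on $N_G[n]$, $Q$ can only move the entries coupling $n+1$ to $N_G[n]$ to leading order, matching the freedom granted by (c).

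The heart of the argument is the expansion $Q\trans BQ=e^{-S}Be^{S}=B+[B,S]+R$, where $R=\sum_{k\ge2}\tfrac1{k!}(\operatorname{ad}_S)^k(B)$ collects the higher-order terms. Writing $\bc=B\be_{n+1}$ (so $c_{n+1}=\wt\lambda$ and $c_j=e_{j,n+1}$ for $j\le n$), a direct computation gives
\[
[B,S]=\frac1{\wt\lambda}\bigl(\bc\bw\trans+\bw\bc\trans-(B\bw)\be_{n+1}\trans-\be_{n+1}(B\bw)\trans\bigr).
\]
Reading off entries: for $j\le n$ the $(n+1,j)$ entry equals $w_j-(B\bw)_j/\wt\lambda$, whose first term is exactly the signed injection $\sigma_j b_{nj}$ (of size $O(1)$, and nonzero precisely on $N_G[n]$) while $(B\bw)_j=O(1)$ divided by $\wt\lambda$ is $O(\epsilon^2)$; every other entry of $[B,S]$ (both indices $\le n$, or the $(n+1,n+1)$ entry) carries a factor $e_{\cdot,n+1}=O(\epsilon)$ over $\wt\lambda$ and is $O(\epsilon^2)$. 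This yields (c) away from the exceptional couplings, and since $[B,S]_{n,j}=O(\epsilon^2)$ as well, the difference (for $\sigma_j=+1$) and the sum (for $\sigma_j=-1$) of $(Q\trans BQ)_{n+1,j}$ and $(Q\trans BQ)_{n,j}$ both reduce to $e_{n+1,j}+O(\epsilon^2)$, which are (d) and (e).

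It then remains to control $R$. Using $\|S\|\le 2\|\bw\|/\wt\lambda$ together with $\|\bw\|\le\sqrt n\,\max(A)(1+\Delta)$ and the hypothesis $\lambda\ge 4n(\max(A)(1+\Delta))^2/\epsilon^2$ gives $\|B\|\,\|S\|^2=O(\epsilon^2)$ uniformly in $\lambda$, whence $\|R\|\le 2\|B\|\,\|S\|^2e^{2\|S\|}=O(\epsilon^2)$ and every entry of $R$ is $O(\epsilon^2)$; adding $R$ to $[B,S]$ leaves all the estimates above intact. The main obstacle is exactly this separation of scales: $\|S\|$ is only $O(\epsilon^2)$, yet conjugation must move row $n+1$ by $O(1)$, which is possible solely because $\|B\|=O(\lambda)$ is enormous. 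One must argue that the single commutator $[B,S]$ supplies the entire $O(1)$ change—the exact signed copy of row $n$ on $N_G[n]$—while the quadratic-and-higher tail stays $O(\epsilon^2)$; this is precisely where the quantitative lower bound on $\lambda$ is spent, and it explains the factor $4n(\max(A)(1+\Delta))^2/\epsilon^2$. A secondary point worth verifying carefully is that restricting $\bw$ to $N_G[n]$ (rather than taking all of the signed row $n$) is essential, since the entries $e_{n,j}=O(\epsilon)$ for $j\notin N_G[n]$ would otherwise leak an $O(\epsilon)$ change into non-exceptional positions and break (c).
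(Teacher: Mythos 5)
Your proposal is correct and is essentially the paper's own argument: both proofs conjugate $B$ by the exponential of a rank-two skew-symmetric generator supported on the plane spanned by $\be_{n+1}$ and the signed copy of row $n$ restricted to $N_G[n]$, extract the required change from the first-order commutator, and spend the hypothesis $\lambda \geq 4n\left(\max(A)(1+\Delta)\right)^2/\epsilon^2$ to force all higher-order terms (and all entries outside the exceptional positions) down to $O(\epsilon^2)$. The only deviation is that the paper builds its generator from the exact solution of $(\lambda I - A)\mathbf{k} = D(\ba_n + \bff_n)$, bounded via Lemma~\ref{bound}, so that the commutator with $A_\lambda$ is exactly the target rank-two matrix, whereas you simply scale by $1/\wt\lambda$; this merely shifts an extra harmless $O(\epsilon^2)$ term into the remainder and is a cosmetic simplification, not a different method.
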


\begin{proof}  
Since  $\lambda \ge 2  \rho(A)$,   $\lambda I-A$ is positive definite.
Define $\mathbf{k}$ to be the $n$-vector given by $(\lambda I-A)\mathbf{k} = D(\ba_{n}+\bff_{n})$ where $\ba_n$ and $\bff_n$ denote  the $n$th columns of $A$ and $F(\{n+1\})$, and $D=\mbox{diag}(d_1, \ldots, d_n)$ is the diagonal 
matrix with  $d_i$ equal to  $1$  if $i \in \alpha\cup\{n\}$, $-1$ if $i\in \beta$ and $0$ otherwise. 

Note  that the  choice of $\lambda$ and Lemma \ref{bound} imply that 
\begin{eqnarray*}
\lambda \max(\mathbf{k})^2  &\leq & \lambda \left( \frac{\sqrt{n} \max (A)(1 + \Delta)}{\lambda_{\min}(\lambda I-A)} \right)^2  
\\
& \leq &\frac{\lambda n (\max(A)(1+ \Delta))^2}{(\lambda-\rho(A))^2} 
\\ 
& \leq & \frac{4\lambda n (\max(A)(1+ \Delta))^2}{\lambda^2}\\
& \leq &  \epsilon^2 .
\end{eqnarray*}
Let  $K$ be the skew-symmetric matrix
\[\renewcommand{\arraystretch}{1.3} 
\left( 
\begin{array}{c|c} O & -\mathbf{k} 
\\  \hline
\mathbf{k}\trans & 0 \end{array} \right)
\]
and let $Q$ be the matrix exponential $e^{K}$, which is an orthogonal matrix.
For $\epsilon$ sufficiently small, 
 $Q\trans BQ$ has the form 
\[
B+  A_{\lambda} K - KA_{\lambda} 
+ O(\epsilon^2)L 
\]
with $\max(L)\leq 1$, because every entry of $K$ is $O(\epsilon)$, as is every entry of $B-A_\lam$. 
Since \begin{eqnarray*}
A_{\lambda} K - KA_{\lambda} &=&
 \left(\renewcommand{\arraystretch}{1.3}  \begin{array}{c|c} O & D(\mathbf{a}_n+\mathbf{f}_n) \\ \hline
(\mathbf{a}_n+\mathbf{f}_n)\trans D  &O  \end{array} \right)\!,
\end{eqnarray*}
 $Q\trans BQ$ has the desired form. 
\end{proof}

\begin{thm}
\label{thm:fixpoint}
Let $A$ be a symmetric $n\times n$ matrix with graph $G$ and the SSP,  and let $\alpha\dcup \beta$ be a partition 
of $N_G(n)$. 
Let $H$ be the graph obtained from $G\dcup K_1$ by joining
$n+1$ to each vertex in $N_G[n]$.  
Then 
for $\epsilon>0$ sufficiently small and $\lambda$ sufficiently 
large there is a matrix $C$ such that: 
\begin{itemize}
\item the spectrum of $C$ is that of $A$ along with $\lambda$; 
\item $|c_{ij}-a_{ij}|\leq O(\epsilon)$  for all $i$ and $j$
with $\{i,j\} \notin \{\{n,j\}: j \in {N_G[n]}\}$;
\item $c_{ni}=c_{n+1,i}$ for $i\in \alpha\cup\{n\}$;
\item $c_{ni}=-c_{n+1,i}$ for $i \in \beta$; 
\item $c_{n,n}\ne c_{n+1,n+1}$; and 
\item the graph of $C$ is $H$.
\end{itemize}
\end{thm}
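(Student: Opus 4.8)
The plan is to produce the matrix $C$ by a fixed-point / successive-correction argument that iterates the perturbation machinery of Lemma~\ref{epsilon-bound} and Proposition~\ref{tech}, starting from $A_\lambda$. The strategy is as follows. We want a matrix of the special form demanded by Theorem~\ref{cor:form}, namely $C = A_\lambda + \left(\begin{smallmatrix} O & \bx \\ \bx\trans & 0\end{smallmatrix}\right) + O(\epsilon)L$ whose last two rows satisfy the symmetry relations $c_{n,j}=c_{n+1,j}$ for $j\in\alpha\cup\{n\}$ and $c_{n,j}=-c_{n+1,j}$ for $j\in\beta$, whose off-$N_G[n]$ entries in rows $n,n+1$ vanish, with $c_{n,n}\ne c_{n+1,n+1}$, with $C(n+1)$ having graph $G$, and with $\spec(C)=\spec(A_\lambda)$. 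So I would fix $\lambda$ large (satisfying the bound of Proposition~\ref{tech}) and build $C$ as a limit of an iteration whose each step (i) restores the exact spectrum using the SSP correction of Lemma~\ref{epsilon-bound}, and (ii) restores the desired combinatorial/sign pattern on the last two rows using an orthogonal conjugation $Q\trans(\cdot)Q$ as in Proposition~\ref{tech}.

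The key steps, in order, would be: First I would choose the ``target'' entries to install in rows $n$ and $n+1$. The entries in row $n+1$ indexed by $N_G(n)$ are free parameters $x_j$; I split them according to $\alpha,\beta$ so that after the rotation $R$ of Theorem~\ref{cor:form} the edge $nj$ is preserved for $j\in\alpha$ and moved to $n{+}1,j$ for $j\in\beta$. I set the off-pattern entries ($j\notin N_G[n]$) of rows $n,n+1$ to zero and arrange $c_{n,n}\ne c_{n+1,n+1}$ by perturbing the diagonal. Second, starting with $B_0 = A_\lambda + (\text{this nominal } O(\epsilon) \text{ perturbation } E_0)$, I apply Lemma~\ref{epsilon-bound} to obtain a correction $F_0$ with $\max(F_0)\le\Delta\epsilon$ and $\G(F_0)\subseteq G$ so that $A_\lambda+E_0+F_0$ is cospectral with $A_\lambda$; this $F_0$ however disturbs the carefully chosen sign/zero pattern. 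Third, I apply Proposition~\ref{tech} to this cospectral matrix: it produces an orthogonal $Q$ (a matrix exponential of a skew matrix with entries $O(\epsilon)$, hence $Q\trans BQ - B = O(\epsilon)$ off the special block) that restores the pattern relations in items (c)--(e) up to an error of size $O(\epsilon^2)$. Crucially, conjugation preserves the spectrum exactly, so after this step the spectrum is still correct, the pattern relations hold up to $O(\epsilon^2)$, and the ``damage'' to the pattern has been reduced from $O(\epsilon)$ to $O(\epsilon^2)$.

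The heart of the argument is then to iterate: treat the residual $O(\epsilon^2)$ pattern violation as a new (smaller) perturbation $E_1$ with $\max(E_1)=O(\epsilon^2)$, re-apply Lemma~\ref{epsilon-bound} and Proposition~\ref{tech}, and obtain a correction whose residual is $O(\epsilon^4)$, and so on. Since at each stage the spectrum is preserved exactly (the $F$'s preserve cospectrality and the $Q$'s are orthogonal), and the pattern error is squared at each step, the geometric-type (in fact doubly-exponential) decay guarantees that the total accumulated correction is $O(\epsilon)$ and the sequence of matrices converges to a limit $C$ that has \emph{exactly} the spectrum of $A_\lambda$ (i.e.\ that of $A$ together with $\lambda$) and \emph{exactly} the required sign/zero pattern on its last two rows. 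I would verify along the way that the off-$N_G[n]$ entries of rows $n,n+1$ stay zero (they are never activated by $F$, since $\G(F)\subseteq G$, nor by the localized rotations $Q$), that $|c_{ij}-a_{ij}|=O(\epsilon)$ away from the modified rows, that $c_{n,n}\ne c_{n+1,n+1}$ is inherited from the nominal choice since the corrections are higher order, and that $C(n+1)$ has graph $G$. Finally $H=\G(C)$ as described follows from the nonzero pattern we installed.

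The main obstacle I expect is controlling the iteration rigorously: I must check that the hypotheses of Proposition~\ref{tech} (in particular the lower bound on $\lambda$ relative to $\epsilon$ via $\lambda\ge \tfrac{4n(\max(A)(1+\Delta))^2}{\epsilon^2}$) continue to hold as $\epsilon$ is replaced by the shrinking residuals $\epsilon,\epsilon^2,\epsilon^4,\dots$, so $\lambda$ must be fixed once for the largest $\epsilon$ and then remains valid for all smaller residuals, and that the accumulated corrections do not push $\max(E)$ back above $\epsilon$ at any stage. Establishing this uniform convergence -- equivalently, setting up a genuine contraction mapping on a suitable closed ball of perturbations whose fixed point simultaneously has the right spectrum and the right pattern -- is the delicate quantitative core of the proof; once that is in place, the structural conclusions (graph $H$, the entrywise $O(\epsilon)$ bounds, and the inequality $c_{n,n}\ne c_{n+1,n+1}$) follow directly from how the nominal perturbation was chosen.
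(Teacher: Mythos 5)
Your assembly of the ingredients (Lemma~\ref{epsilon-bound} to restore the spectrum, Proposition~\ref{tech} to restore the pattern, and a fixed-point mechanism to make the residual vanish exactly) matches the architecture of the paper's proof, but the convergence mechanism you propose does not work, and the obstacle you flagged at the end is precisely where it breaks. The claimed quadratic decay $\epsilon \to \epsilon^2 \to \epsilon^4 \to \cdots$ rests on reading the $O(\epsilon^2)$ error in Proposition~\ref{tech} as $O(\max(E)^2)$, i.e., as shrinking when the perturbation shrinks. It does not: the conjugation there is built from the vector $\mathbf{k}$ solving $(\lambda I - A)\mathbf{k} = D(\ba_n + \bff_n)$, and $\ba_n$ is the \emph{fixed} $n$th column of $A$, so $\max(\mathbf{k}) \sim \max(A)/\lambda$ no matter how small $E$ is; the dominant error term $\lambda \max(\mathbf{k})^2 \sim 4n(\max(A)(1+\Delta))^2/\lambda$ is a floor of order $1/\lambda$, independent of $\max(E)$. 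This is exactly why the hypothesis of Proposition~\ref{tech} reads $\lambda \ge 4n(\max(A)(1+\Delta))^2/\epsilon^2$: smaller $\epsilon$ requires \emph{larger} $\lambda$, so your statement that fixing $\lambda$ ``for the largest $\epsilon$ \ldots remains valid for all smaller residuals'' is backwards. With $\lambda$ fixed, the residual produced by each pass stalls at size $\Theta(1/\lambda)$ and never reaches zero; and even if one tried to rescue the scheme as a Banach-type contraction, one would need a Lipschitz estimate on the correction map $E \mapsto F$ of Lemma~\ref{epsilon-bound}, which is only known (and only needed, in the paper) to be continuous.

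The paper closes this gap with a topological, one-shot argument instead of an iteration. Working on the ball $\Omega = \{E : \max(E) \le \epsilon,\ E \circ A_\lambda = O,\ E \circ I = O\}$, it defines $\phi(E)$ to be the vector of pattern violations of $Q\trans B_E Q$ (entries on non-edges of $H$, and the sums/differences in rows $n, n+1$ that should vanish), so that Proposition~\ref{tech} gives $|\phi(E)_{ij} - e_{ij}| \le O(\epsilon^2)$, i.e., $\phi$ is a small perturbation of the identity on $\Omega$. If $\phi$ never vanished, the normalized map $f(E) = -\epsilon\,\phi(E)/\max\phi(E)$ would be a continuous self-map of $\Omega$ (homeomorphic to a closed ball) with no fixed point --- at any purported fixed point, the largest-magnitude entries of $E$ and $f(E)$ would have opposite signs or incompatible sizes --- contradicting the Brouwer Fixed-Point Theorem. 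This requires only continuity of $E \mapsto F \mapsto Q \mapsto \phi(E)$ and uses the $O(\epsilon^2)$ estimate a single time, so the $1/\lambda$ error floor is harmless. If you want to keep your iterative framing, you would have to replace the ``square the residual each step'' claim with a genuine contraction estimate, which the paper's lemmas do not supply; the Brouwer route is the way this existence statement actually gets proved.
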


\begin{proof}
  Let $\Omega$ be the set of $(n+1)\times (n+1)$ symmetric matrices defined as 
\[\Omega= \{E: \max(E)\leq \epsilon, E \circ A_{\lambda}=O,\text{ and }E \circ I=O\}. \]
 Since $A$ has the SSP, by Lemma~\ref{epsilon-bound}  there is  
 a $\Delta>0$ such that for $\epsilon>0$ sufficiently small (we also require $\epsilon\leq \max(A)$) and $\lam$ sufficiently large,  for all $E\in \Omega$   there is a symmetric matrix $F$ such that the graph of $F$ is a subgraph of $G\dcup \{n+1\})$,  $\max(F) \leq \Delta \max(E)$, and $A_{\lambda} +E +F$  is cospectral to $A_{\lambda}$.  As in Lemma~\ref{epsilon-bound}, $F$ can be chosen to be a uniformly continuous function of the entries of $E$.  Denote such $A_{\lambda} +E+F$ by $B_{E}$.
 
With $\lam$ sufficiently large,  $(B_E)_{nn}\ne (B_E)_{n+1,n+1}$.  By Proposition \ref{tech}, there exists a $Q$ such that $Q\trans B_{E} Q$ satisfies (c), (d), and (e).
 Let $\phi(E)$ be the $(n+1) \times (n+1)$ symmetric matrix with its lower triangular part defined as 
 \[
 \phi(E)_{ij} {=} \left\{ \begin{array}{cl} (Q\trans B_{E}Q)_{ij} & \mbox{ if $i\neq j$, $ij$ is not an edge of $H$}\\
 (Q\trans B_{E}Q)_{n+1,j}-(Q\trans B_EQ)_{nj}& \mbox{ if $i=n+1$ and $j \in \alpha\cup\{n\}$} \\
 (Q\trans B_{E}Q)_{n+1,j}+(Q\trans B_EQ)_{nj}& \mbox{ if $i=n+1$ and $j \in \beta$} \\
 0 & \mbox{otherwise.} 
\end{array} \right.
\] 
 Then (c), (d), and (e) of Proposition \ref{tech}, imply that 
 \[ 
 |\phi(E)_{ij} -e_{ij}| \leq O( \epsilon^2) \mbox{ for all $ij$ with $i\neq j$ and $ij$ not an edge of $G \dcup \{ n+1 \}$},
 \]
 where $E=[e_{ij}]$.
We claim that there exists an $E$ such that $\phi (E) = O$.
Suppose to the contrary that $\phi(E)\neq O$ for all $E$.
Then $f: \Omega \rightarrow \Omega$ by 
$f(E)= -\epsilon \frac{\phi(E)}{\max \phi (E)}$
is a well-defined, continuous map. Let $ij$ be an index with $|\phi(E)_{ij}|$ largest.
Note that  $e_{ij}$ and $f(E)_{ij}$ have opposite signs unless $|e_{ij} | \leq O(\epsilon^2)$, and 
in the latter case $| f(E)_{ij}|= \epsilon > |e_{ij}|$.  Thus $f$ has no fixed point.
However, $\Omega$ is homeomorphic to a closed ball in $\mathbb{R}^d$, where $d$ is the number of edges 
not in $G\dcup \{ n+1\}$.  So the nonexistence of a fixed point would contradict the Brouwer Fixed-Point Theorem.

Thus there exists $E \in \Omega$ such that $\phi(E)=O$.  For such $E$, 
$B_E$, and hence $Q\trans B_{E}Q$, is cospectral with $A_{\lambda}$.  Let $C=Q\trans B_{E}Q$.  Then $C$ has the desired properties.

\end{proof}

Applying Theorem~\ref{cor:form} to the matrix $C$ found in Theorem~\ref{thm:fixpoint}, we have the following result. 

\begin{lem} [Decontraction Lemma for SSP]  \label{decontractSSP}
Suppose  $G$ is obtained from $H$ by contraction of a single edge whose endpoints have disjoint neighborhoods, and $A\in\mptn(G)$ with the SSP.  Then for $\lambda$ sufficiently large,  there is an SSP matrix $A'\in \mptn(H)$ with the same eigenvalues as $A$ and the additional eigenvalue  $\lambda$.  \end{lem}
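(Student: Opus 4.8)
The plan is to chain the two preceding results: Theorem~\ref{thm:fixpoint} manufactures an intermediate ``doubled'' matrix $C$ on $n+1$ vertices, and Theorem~\ref{cor:form} then straightens $C$ into the desired decontraction by a fixed Givens rotation. So the whole argument is essentially bookkeeping that verifies the output of the first theorem is valid input to the second.

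First I would translate the contraction data into a vertex partition. Write $uw$ for the contracted edge of $H$ and relabel so that the vertex of $G$ arising from the contraction is vertex $n$. Since $u$ and $w$ have disjoint neighborhoods in $H$ (no common neighbor), the neighbors of $n$ in $G$ split cleanly as
\[ N_G(n)=\alpha\dcup\beta,\qquad \alpha=N_H(u)\setminus\{w\},\quad \beta=N_H(w)\setminus\{u\}, \]
and $\alpha\cap\beta=\emptyset$. This is exactly where the disjoint-neighborhoods hypothesis is used: if $u$ and $w$ shared a neighbor it would be forced into both parts, and neither the partition nor the later reconstruction of $H$ would make sense.

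Next I would apply Theorem~\ref{thm:fixpoint} to $A$, $G$, and this partition. Choosing $\epsilon$ small and $\lambda$ large (in particular $\lambda\notin\spec(A)$, so that the new eigenvalue is genuinely additional) produces a symmetric matrix $C$ of order $n+1$ with $\spec(C)=\spec(A)\cup\{\lambda\}$, graph $G\dcup K_1$ together with edges from $n+1$ to every vertex of $N_G[n]$, and entries satisfying $c_{ni}=c_{n+1,i}$ on $\alpha\cup\{n\}$, $c_{ni}=-c_{n+1,i}$ on $\beta$, and $c_{nn}\ne c_{n+1,n+1}$. Every one of these conclusions is precisely an item in the hypothesis list of Theorem~\ref{cor:form}: the structural form $C=A_\lambda+\bigl(\begin{smallmatrix}O&\bx\\ \bx\trans&0\end{smallmatrix}\bigr)+O(\epsilon)L$ is the shape built in the proof of Theorem~\ref{thm:fixpoint} (with $\bx$ essentially $D\ba_n$ and all remaining deviations $O(\epsilon)$), the sign and vanishing conditions on the last two rows match line by line, $\spec(C)=\spec(A_\lambda)$ holds, and $C(n+1)$ has graph $G$ because deleting vertex $n+1$ recovers $G$. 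Feeding $C$ into Theorem~\ref{cor:form} then yields $A':=RCR\trans$, cospectral with $A_\lambda$, possessing the SSP, and with graph obtained from $G$ by splitting $n$ into $n$ (retaining its edges to $\alpha$) and $n+1$ (inheriting the edges to $\beta$) joined by the new edge. Under the identifications $\alpha=N_H(u)\setminus\{w\}$, $\beta=N_H(w)\setminus\{u\}$ this graph is exactly $H$, so $A'\in\mptn(H)$ has the SSP and eigenvalues $\spec(A)\cup\{\lambda\}$.

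I expect the only real obstacle to be this last matching: confirming that the ``edge-replacement'' graph described by Theorem~\ref{cor:form} coincides with the given $H$, and that $C$ actually has the exact form demanded there rather than merely an approximate one. Both reduce to the disjoint-neighborhoods condition, which guarantees that in the reconstruction each edge of $H$ is accounted for exactly once, with none doubled and none lost. No genuinely new estimate is needed; the heavy analytic lifting (the singular-value bound, the Brouwer fixed-point argument, and the transversality computation) has already been carried out in Lemmas~\ref{epsilon-bound}, \ref{bound}, Proposition~\ref{tech}, and Theorems~\ref{thm:fixpoint} and \ref{cor:form}.
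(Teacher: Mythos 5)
Your proposal is correct and is essentially the paper's own proof: the paper's argument consists precisely of applying Theorem~\ref{cor:form} to the matrix $C$ produced by Theorem~\ref{thm:fixpoint}, with the contracted edge's endpoints inducing the partition $\alpha\dcup\beta$ of $N_G(n)$. Your additional bookkeeping (identifying $\alpha$ and $\beta$ with the two neighborhoods, checking that $C$ has the exact form required by Theorem~\ref{cor:form}, and matching the output graph with $H$) is exactly the verification the paper leaves implicit.
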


Similar arguments can be employed to establish the following analogous result for SMP.

 \begin{lem} [Decontraction Lemma for SMP] \label{decontractSMP}
Suppose  $G$ is obtained from $H$ by contraction of a single edge whose endpoints have disjoint neighborhoods, and $A\in\mptn(G)$ with SMP.  Then there is an SMP matrix $A'\in \mptn(H)$ with the  ordered multiplicity list obtained  by adding a 1 to the end  of  $\oml(A)$ .\end{lem}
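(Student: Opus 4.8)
The plan is to transcribe the entire SSP development of Lemmas~\ref{TS-sum}--\ref{decontractSSP}, replacing the isospectral manifold $\mei$ by the ordered-multiplicity manifold $\mmu$, the tangent-space matrix $\TSS$ by $\TSM$, and ``cospectral'' by ``having the same ordered multiplicity list'' throughout. The key is the obvious SMP analog of Remark~\ref{spacetvl}: $A$ has the SMP if and only if the nonedge rows of $\TSM(A)$ are linearly independent. This makes every statement in the chain have a direct SMP counterpart. Since $\lambda$ is taken sufficiently large, $\lambda\notin\spec(A)$, so $A_\lambda=A\oplus[\lambda]$ has exactly $q+1$ distinct eigenvalues and $\oml(A_\lambda)=(\oml(A),1)$; preserving the ordered multiplicity list throughout then yields $A'\in\mptn(H)$ with $\oml(A')=(\oml(A),1)$.

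Concretely, I would first record the $\TSM$ analog of Lemma~\ref{TS-sum}: $\TSM(A_\lambda)$ is the block matrix of Lemma~\ref{TS-sum} with the $q+1$ power columns $\vect((A_\lambda)^k)$, $k=0,\dots,q$, appended, and since $(A_\lambda)^k=A^k\oplus[\lambda^k]$, each appended column carries $\vect(A^k)$ in the top ($A$) block, zeros in the cross rows $(i,n+1)$, and $\lambda^k$ in the $(n+1,n+1)$ row. Next I would prove the SMP version of Lemma~\ref{epsilon-bound}: because $A$, hence $A_\lambda$, has the SMP, the nonedge rows of $\TSM(A_\lambda)$ are independent, and the Implicit Function Theorem supplies a correction $F$, supported on $G\dcup\{n+1\}$ with $\max(F)\le\Delta\max(E)$, keeping $A_\lambda+E+F$ on $\mmu_{A_\lambda}$. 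Proposition~\ref{tech} transfers verbatim, since the orthogonal $Q=e^{K}$ produces a genuine similarity and hence preserves the ordered multiplicity list automatically; the Brouwer fixed-point argument of Theorem~\ref{thm:fixpoint} then produces the special matrix $C$ with $\oml(C)=(\oml(A),1)$ and the prescribed entry pattern.

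The decisive step is the $\TSM$ transversality analog of Theorem~\ref{cor:form}, computing $\TSM(C)$ from the analog of Lemma~\ref{TSper} together with the augmenting columns. Here the obstacle is genuine and specific to the SMP: the cross entries of $C$ are $\bx=D\ba_n+O(\epsilon)$, which are $O(1)$ rather than $O(\epsilon)$, so the power columns $\vect(C^k)$ acquire cross-row entries $(C^k)_{j,n+1}\sim x_j\lambda^{k-1}$ that grow faster in $\lambda$ than the entries of the dominant block $A-\lambda I$. Thus the ``$\lambda$ dominates every perturbation'' estimate used in \eqref{TSeq} fails if the power columns are left as is. The resolution is to replace the power columns by the spectral-projection columns $\vect(P_i)$, where $P_0,\dots,P_q$ are the orthogonal projections onto the eigenspaces of $C$: since $\{I,C,\dots,C^{q}\}$ and $\{P_0,\dots,P_q\}$ have the same span, this is a span-preserving column operation that changes neither the column space of $\TSM(C)$ (so transversality is unaffected, by \cite[Theorem~27]{genSAP}) nor the linear independence of rows. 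The projection $P_\lambda$ onto the eigenvalue near $\lambda$ has eigenvector $\be_{n+1}+O(1/\lambda)$, so $P_\lambda=\be_{n+1}\be_{n+1}\trans+O(1/\lambda)$ has cross-row entries $O(1/\lambda)$ and places its dominant entry in the non-relevant diagonal row $(n+1,n+1)$; the remaining projections reduce, as $\lambda\to\infty$, to $P_i^{A}\oplus[0]$ with vanishing cross entries. Consequently the cross rows are again dominated by $-\lambda I$, while the top ($A$-block) rows assemble $\TSS(A)$ together with $\vect(P_i^A)$ into the tangent data of $\mmu_A$, whose nonedge rows are independent \emph{by the SMP of $A$} — the one place where SMP, rather than the SSP that $A$ need not possess, is used.

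Transversality of $\mmu_C$ with $V$ then follows, and the Givens rotation $R$ of Theorem~\ref{cor:form} produces $A'=RCR\trans\in\mptn(H)$ with the SMP and $\oml(A')=\oml(C)=(\oml(A),1)$. I expect the control of these augmenting columns to be the main obstacle, and the switch from power columns to spectral-projection columns to be the key device that restores the $\lambda$-dominance argument in the SMP setting.
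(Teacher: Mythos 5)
Your proposal is correct and follows exactly the route the paper intends: the paper's entire ``proof'' of Lemma~\ref{decontractSMP} is the single sentence that ``similar arguments can be employed'' to adapt the SSP chain (Lemmas~\ref{TS-sum}--\ref{decontractSSP}), which is precisely what you carry out, using the correct SMP criterion (linear independence of the nonedge rows of $\TSM$) and the fact that $\oml(A_\lambda)=(\oml(A),1)$ for $\lambda$ large. Moreover, the obstacle you isolate is genuine --- the augmenting columns $\vect(C^k)$ have cross-row entries of order $\lambda^{k-1}$, so the $\lambda$-dominance argument of \eqref{TSeq} does not transfer verbatim --- and your repair is sound: since $\{I,C,\dots,C^{q}\}$ and the spectral projections of $C$ have the same span (Vandermonde), replacing the power columns by projection columns is an invertible column operation that preserves both the tangent space and left-null vectors, and the projections stay $O(1)$ with cross-row entries $O(1/\lambda)$, restoring the dominance argument with the SMP of $A$ entering exactly where the SSP of $A$ entered before; this supplies a detail the paper leaves entirely implicit.
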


Combining these results, and  subgraph results \cite[Theorem~36]{genSAP}, we obtain the following general result 
regarding graph minors.

\begin{thm}[Minor Monotonicity Theorem] \label{minormon}
Suppose  $G$ is a minor of  $H$ obtained by contraction of $r$ edges,  deletion of $s$ vertices, and deletion of any number of edges, and $A\in\mptn(G)$.  

If $A$ has SMP and   $\oml(A)=(m_1,\dots,m_t)$, then  there is a matrix $A'\in \mptn(H)$ having SMP with $\oml(A')$ obtained  from $\oml(A)$ by adding $r+s$ ones,  with at most $s$ of these between $m_1$ and $m_t$. 

If in addition $A$ has the SSP, then $A'$ can be chosen to have the SSP, $\spec(A)\subseteq\spec(A')$, all eigenvalues not in $\spec(A)$ are simple and distinct,  at most $s$ of these additional eigenvalues are between 
$\lambda_{\min}(A)$ and $\lambda_{\max}(A)$,  and $s$  simple eigenvalues (including all of those between 
$\lambda_{\min}(A)$ and $\lambda_{\max}(A)$)  can be chosen arbitrarily. 
\end{thm}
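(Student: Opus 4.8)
The plan is to reverse the three kinds of minor operations one elementary step at a time, starting from $A\in\mptn(G)$ and building a sequence of matrices realizing the intermediate graphs in a fixed chain $G=H_m, H_{m-1},\dots,H_0=H$, where each passage $H_{i+1}\to H_i$ undoes a single edge deletion, vertex deletion, or edge contraction. At each step I maintain the SSP (respectively the SMP) and record which new eigenvalue (respectively which new $1$ in the multiplicity list) is introduced, so that the final matrix $A'\in\mptn(H)$ inherits the property and the claimed bookkeeping follows by accumulation.

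The three elementary reversals are handled as follows. For an \emph{edge deletion} ($H_{i+1}=H_i-e$) I apply the subgraph monotonicity results \cite[Theorem~36]{genSAP} directly: since $H_{i+1}$ is a spanning subgraph of $H_i$, a matrix with the SSP (or SMP) realizing $H_{i+1}$ yields one realizing $H_i$ with the \emph{same} spectrum, adding no new eigenvalue. For a \emph{vertex deletion} ($H_{i+1}=H_i-x$) I first form the direct sum with a $1\times 1$ block $[\mu]$, choosing $\mu$ outside the spectrum of the current matrix; as in the observation used in Lemma~\ref{epsilon-bound}, this preserves the SSP and makes $\mu$ a new simple eigenvalue, and it realizes $H_{i+1}\dcup K_1$, a spanning subgraph of $H_i$. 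Applying subgraph monotonicity again to join $x$ to $N_{H_i}(x)$ produces a realization of $H_i$ with spectrum enlarged by the single value $\mu$, which may be placed anywhere, in particular interior to $[\lambda_{\min}(A),\lambda_{\max}(A)]$. For an \emph{edge contraction} ($H_{i+1}=H_i/e$ with $e=uv$) I reduce to the disjoint-neighborhood hypothesis of Lemma~\ref{decontractSSP}: let $H_i'$ be obtained from $H_i$ by deleting, for each common neighbor $c$ of $u$ and $v$, exactly one of the edges $uc$, $vc$. Then $u,v$ have disjoint neighborhoods in $H_i'$, while $H_i'/e=H_{i+1}$ still holds because each such $c$ remains adjacent to $\{u,v\}$; thus Lemma~\ref{decontractSSP} (or Lemma~\ref{decontractSMP}) applies and adds a single eigenvalue $\lambda$ that must be taken sufficiently large. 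Since $H_i'$ is a spanning subgraph of $H_i$, one further application of subgraph monotonicity restores the deleted parallel edges without changing the spectrum, yielding a realization of $H_i$.

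Composing these $m$ steps gives $A'\in\mptn(H)$ with the SSP (or SMP). For the eigenvalue bookkeeping, the $r$ contractions each contribute one eigenvalue that can be forced \emph{sufficiently large}, hence chosen above $\lambda_{\max}(A)$ and above all previously added values, and therefore never interior to $[\lambda_{\min}(A),\lambda_{\max}(A)]$; distinct large choices keep these simple and distinct. The $s$ vertex deletions each contribute one eigenvalue $\mu$ chosen freely (distinct from the current spectrum), so at most $s$ of the $r+s$ new eigenvalues lie strictly between $\lambda_{\min}(A)$ and $\lambda_{\max}(A)$, and these $s$ freely chosen values include every interior one. Because edge-deletion reversals add no eigenvalue, the total count is exactly $r+s$, which on the SMP side means $\oml(A')$ arises from $\oml(A)$ by inserting $r+s$ ones, at most $s$ of them strictly between $m_1$ and $m_t$.

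The main obstacle is the contraction step, where the hypothesis of the Decontraction Lemma forces disjoint neighborhoods and pushes the new eigenvalue to the extreme of the spectrum; the care lies in verifying that deleting one edge of each parallel pair \emph{before} decontracting produces exactly $H_{i+1}$, so these deletions can be absorbed into the edge-deletion budget and restored afterward by monotonicity, and in checking that the ``sufficiently large'' thresholds, which depend on the current matrix, can be met in turn as the spectrum grows over finitely many steps. The SMP version is entirely analogous, tracking the ordered multiplicity list in place of the exact spectrum and invoking Lemma~\ref{decontractSMP} together with the SMP form of subgraph monotonicity.
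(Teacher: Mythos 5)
Your proof is correct and takes essentially the same route as the paper: the paper's own (one-sentence) proof likewise obtains Theorem \ref{minormon} by combining the Decontraction Lemmas \ref{decontractSSP} and \ref{decontractSMP} with the subgraph monotonicity results of \cite[Theorem~36]{genSAP}, undoing the minor operations one elementary step at a time. Your explicit reduction of a general contraction to a disjoint-neighborhood contraction (deleting one edge of each parallel pair first, then restoring edges by subgraph monotonicity) and your eigenvalue bookkeeping are precisely the details the paper leaves implicit.
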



\section{The Matrix Liberation Lemma and other technical tools}\label{sMLL}
In this section we prove the Matrix Liberation Lemma and some consequences, which were used to establish several previous results.  In each case there is a rectangular matrix that characterizes the extent to which   the {zero} entries of the matrix can be perturbed  (with sufficiently small changes) 
while preserving the exact spectrum (SSP),  the ordered multiplicity list  (SMP), or  the rank (SAP).  These matrices, which are necessary in order to state the Matrix Liberation Lemma, are defined Definition \ref{vermtxdef}.  

\begin{defn}\label{vermtxdef}  Let $G$ be a graph and let $\overline{E}$ be the set of pairs $\{(i,j):i<j,\{i,j\}\in E(\overline{G})\}$.  Let $A\in\mptn(G)$ and $p=|\overline{E}|$.  
\begin{enumerate}
\item The {\em SSP verification matrix} $\verS(A)$ of $A$ is the $p\times\binom{n}{2}$ matrix $\TSS(A)[\Ec,:]$.
\item The {\em SMP verification matrix} $\verM(A)$ of $A$ is the $p\times\left(\binom{n}{2}+q\right)$ matrix $\TSM(A)[\Ec,:]$.
\item The {\em SAP verification matrix} $\verA(A)$ of $A$ is the $p\times n^2$ matrix $\TSA(A)[\Ec,:]$.
\end{enumerate}
\end{defn}

It was shown in \cite{genSAP} that $A$ satisfies the SSP, SMP, or SAP, respectively,
if and only if the matrix $\verS(A)$, $\verM(A)$, or $\verA(A)$
has full rank $p$.  Notice that in $\verM(A)$ the columns  $\vect_{\Ec}(A^0)$ and $\vect_{\Ec}(A^1)$ are always zero, so we may omit them for verifying if $A$ has the SMP or not.

The {\em support} of a vector $\bx$ is the set of indices of nonzero coordinates of $\bx$, and is denoted by $\supp(\bx)$.  The following result from \cite{HLS} will be used.

\begin{lem}\label{HLStangent}{\rm \cite[Corollary 2.2]{HLS}}  
Assume that $M_1,\ldots, M_k$ intersect transversally at $\by$, and let $\bv$ be a common
tangent to each of $M_1,\ldots,M_k$ with $||\bv||=1$. Then for every $\epsilon>0$ there exists a point $\by'\neq \by$ such that $M_1,\ldots,M_k$ intersect transversally at $\by'$, and
\[\left\|\frac{1}{\|\by-\by'\|}(\by-\by')-\bv\right\|<\epsilon.\]
\end{lem}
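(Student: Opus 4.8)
The plan is to exploit two standard facts from differential topology: that a transversal intersection is locally a smooth manifold whose tangent space is the intersection of the individual tangent spaces, and that transversality is an \emph{open} condition. Concretely, since $M_1,\ldots,M_k$ intersect transversally at $\by$, in a neighborhood of $\by$ the set $M:=M_1\cap\cdots\cap M_k$ is a smooth embedded manifold with $T_{\by}M=\bigcap_{i=1}^k T_{\by}M_i$. The hypothesis that $\bv$ is a common tangent to all of the $M_i$ says precisely that $\bv\in\bigcap_i T_{\by}M_i=T_{\by}M$. Thus $\bv$ is an honest tangent direction of the intersection manifold, and the whole argument reduces to moving a short distance along $M$ in the direction $\bv$.

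To realize this, I would produce a smooth curve leaving $\by$ in the direction $\bv$. Since $\bv\in T_{\by}M$ and $\|\bv\|=1$, there is a smooth curve $\gamma:(-\delta,\delta)\to M$ with $\gamma(0)=\by$ and $\gamma'(0)=\bv$. A Taylor expansion gives $\gamma(t)-\by=t\bv+O(t^2)$, and since $\|\bv\|=1$ we obtain $\|\gamma(t)-\by\|=|t|\bigl(1+O(t)\bigr)$; hence for $t>0$
\[
\frac{1}{\|\gamma(t)-\by\|}\bigl(\gamma(t)-\by\bigr)=\frac{t\bv+O(t^2)}{t\bigl(1+O(t)\bigr)}=\bv+O(t),
\]
which tends to $\bv$ as $t\to 0^+$. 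Taking $\by'=\gamma(t)$ for sufficiently small $t>0$ therefore places the secant direction within $\epsilon$ of $\bv$, and $\by'\neq\by$ because $\gamma'(0)=\bv\neq\bzero$ makes $\gamma$ an immersion near $0$.

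It remains to guarantee that transversality persists at $\by'$. Here I would invoke openness: at any point $\by''$ near $\by$, transversality of the $M_i$ is an independence condition on the normal spaces $(T_{\by''}M_i)^\perp$, equivalently the full column rank of a matrix whose columns form bases of these normal spaces. The relevant tangent/normal data vary continuously with $\by''$, and full rank is preserved under small perturbations, so there is a neighborhood of $\by$ throughout which every intersection point is a transversal one. Shrinking $\delta$ so that $\gamma\bigl((-\delta,\delta)\bigr)$ lies in this neighborhood, each $\by'=\gamma(t)$ with $0<t<\delta$ is a transversal intersection point, and choosing $t$ small enough to also meet the $O(t)<\epsilon$ bound completes the argument.

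The step I expect to be the main obstacle is making the openness claim precise under the correct notion of transversality for $k$ manifolds. For two manifolds transversality is simply $T_{\by}M_1+T_{\by}M_2$ equal to the ambient space, but for $k\ge 2$ one must use the stronger formulation that the normal spaces $(T_{\by}M_i)^\perp$ are independent (their sum is direct), equivalently that the codimension of the intersection equals the sum of the codimensions. Only under this reading do both $T_{\by}M=\bigcap_i T_{\by}M_i$ and the full-rank characterization of transversality hold, and continuity of the defining bases together with lower semicontinuity of rank then yield openness. Confirming that this matches the paper's intended notion of transversal intersection, as used in Remark~\ref{spacetvl}, so that $\bv\in T_{\by}M$ and transversality is genuinely an open condition, is the one point that requires care.
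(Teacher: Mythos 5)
There is nothing to compare against inside the paper: Lemma~\ref{HLStangent} is quoted from \cite[Corollary 2.2]{HLS}, and no proof of it appears in this paper at all. Your argument is correct, and it is essentially the argument behind the cited result: transversality makes $M = M_1\cap\cdots\cap M_k$ a smooth manifold near $\by$ with $T_{\by}M=\bigcap_{i}T_{\by}M_i$, so one can follow a curve in $M$ leaving $\by$ with velocity $\bv$, and transversality is an open condition along the intersection, so sufficiently nearby points of that curve serve as $\by'$. The point you flag as needing care is resolved in your favor: in \cite{HLS}, transversal intersection of $k$ manifolds is defined exactly by independence of the normal spaces $(T_{\by}M_i)^{\perp}$ (their sum is direct), which is precisely what both the identity $T_{\by}M=\bigcap_i T_{\by}M_i$ and your full-rank/openness argument require; Remark~\ref{spacetvl} is the $k=2$ instance of this convention, since for two manifolds triviality of the intersection of the normal spaces is the same as $T_{\by}M_1+T_{\by}M_2$ being the whole ambient space. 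Note also that this paper only ever invokes the lemma with $k=2$ (the subspace $V$ and the isospectral manifold $\mei_A$ in the proof of Lemma~\ref{mtxliblem}), so every use made of it here falls under the classical two-manifold notion, where your argument needs no extra care.
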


\begin{lem} [Matrix Liberation Lemma] \label{mtxliblem}
Let $G$ be a graph and $A\in\mptn(G)$.
Let $\Psi$ be one of the following:
\begin{itemize}
\item Case 1. $\Psi = \verS(A)$.
\item Case 2. $\Psi = \verM(A)$.
\item Case 3. $\Psi = \verA(A)$.
\end{itemize}
Suppose $\bx$ is a vector in the column space of $\Psi$ such
that the complement of $\supp(\bx)$ corresponds to a linearly independent set of rows in $\Psi$. Let
$H$ be a spanning subgraph of $\Gc$ whose edges correspond to $\supp(\bx)$.  Then $A$ can be perturbed to
$A'\in\mptn(G \cup H)$ such that:
\begin{itemize}
\item Case 1. $A'$ satisfies the SSP with the same spectrum as $A$.
\item Case 2. $A'$ satisfies the SMP with the same ordered multiplicity list as $A$.
\item Case 3. $A'$ satisfies the SAP with the same rank as $A$.
\end{itemize}
\end{lem}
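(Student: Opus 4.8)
The plan is to read the two standing hypotheses as exactly the transversality datum and the common-tangent datum required by the tangent-perturbation lemma of \cite{HLS} (Lemma~\ref{HLStangent}), and then to let that lemma produce $A'$. Throughout, let $\mathcal M$ denote the manifold attached to the case at hand: $\mei_A$ in Case~1, $\mmu_A$ in Case~2, and $\mrk_A$ in Case~3. By \cite[Theorem~27]{genSAP} the column space of the associated tangent space matrix ($\TSS(A)$, $\TSM(A)$, or $\TSA(A)$) is the tangent space $T_A\mathcal M$, viewed in $\vect$-coordinates indexed by pairs $(i,j)$ with $i\le j$, and $\Psi$ is the submatrix of that matrix formed by the rows indexed by $\Ec$. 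Let $V$ be the subspace of symmetric matrices whose entries vanish on every non-edge of $G\cup H$. Since $\supp(\bx)$ indexes the edges of $H$, the rows of $\Psi$ in the complement of $\supp(\bx)$ are exactly the rows indexed by the non-edges of $G\cup H$, and $V^{\perp}$ is spanned by the off-diagonal coordinate directions supported on those non-edges. Note that $A\in\mathcal M\cap V$: it lies on $\mathcal M$ trivially, and in $V$ because $A\in\mptn(G)$ already vanishes off the edges of $G$.

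First I would convert the independence hypothesis into transversality. By the criterion of Remark~\ref{spacetvl} (applied with $\mathcal M$ in place of $\mei_A$ and the tangent space matrix of $\mathcal M$ in place of $\TSS$), $\mathcal M$ and $V$ meet transversally at $A$ if and only if the only $X\in V^{\perp}$ whose $\vect$ is orthogonal to $T_A\mathcal M$ is $X=O$; such an $X$ amounts precisely to a linear dependence among the rows of the tangent space matrix indexed by the non-edges of $G\cup H$, that is, among the rows of $\Psi$ lying outside $\supp(\bx)$. Hence the hypothesis that the complement of $\supp(\bx)$ is linearly independent says exactly that $\mathcal M$ and $V$ intersect transversally at $A$.

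Next I would produce the common tangent. Since $\bx$ lies in the column space of $\Psi$, I choose a coefficient vector $\bc$ with $\Psi\bc=\bx$ and let $\bv\in T_A\mathcal M$ be the vector obtained by applying the full tangent space matrix to $\bc$; its $\Ec$-coordinates are then exactly $\bx$. Consequently $\bv$ vanishes on every non-edge of $G\cup H$, so $\bv$ is tangent to $V$ as well, while on each edge of $H$ its coordinate equals the corresponding nonzero entry of $\bx$. Thus $\bv$ is a nonzero common tangent to $\mathcal M$ and $V$ whose support contains all of $\supp(\bx)$, and I normalize so that $\|\bv\|=1$. (If $H$ has no edges, then $\bx=\bzero$, the complement of $\supp(\bx)$ is all of $\Ec$, the independence hypothesis already asserts that $A$ has the required property, and one simply takes $A'=A$.)

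Finally I would apply Lemma~\ref{HLStangent} to $\mathcal M$ and $V$ at $A$ with common tangent $\bv$: for each $\epsilon>0$ it produces a point $A'\neq A$ at which $\mathcal M$ and $V$ still meet transversally and for which $\frac{1}{\|A'-A\|}(A'-A)$ lies within $\epsilon$ of $\bv$. From $A'\in\mathcal M$ it follows that $A'$ shares the spectrum (Case~1), the ordered multiplicity list (Case~2), or the rank (Case~3) of $A$; from $A'\in V$ that $A'$ vanishes on every non-edge of $G\cup H$; and choosing $\epsilon<\min_{(i,j)\in\supp(\bx)}|v_{ij}|$ forces $A'-A$ to be nonzero on each edge of $H$, so (since $A$ vanishes there) every such edge is activated, while for $\epsilon$ small the nonzero entries of $A$ on the edges of $G$ persist. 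Hence $\G(A')=G\cup H$ and $A'\in\mptn(G\cup H)$. Running Remark~\ref{spacetvl} backward at $A'$, the transversality of $\mathcal M$ and $V$ there says that the rows indexed by the non-edges of $G\cup H$ --- which are all the rows of the verification matrix $\verS(A')$, $\verM(A')$, or $\verA(A')$ of $G\cup H$ --- are linearly independent, i.e. that this verification matrix has full rank, which is the SSP, SMP, or SAP for $A'$. The three cases are genuinely uniform, differing only in the choice of manifold and tangent space matrix, so I expect the main delicacy to be the twofold use of Remark~\ref{spacetvl} (forward at $A$ to extract transversality from the independence hypothesis, backward at $A'$ to convert the transversality returned by Lemma~\ref{HLStangent} into full rank of the verification matrix), together with the elementary estimate ensuring that the direction supplied by Lemma~\ref{HLStangent} activates every edge of $H$ while leaving the edges of $G$ intact.
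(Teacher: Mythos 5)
Your proposal is correct and follows essentially the same route as the paper's own proof: lift $\bx$ to a common tangent of the manifold and the subspace $V$, read the row-independence hypothesis as transversality at $A$ via Remark~\ref{spacetvl}, invoke Lemma~\ref{HLStangent} to get $A'$, and convert the transversality at $A'$ back into full rank of the verification matrix. The only differences are presentational --- you treat the three cases uniformly with a placeholder manifold $\mathcal{M}$ and explicitly dispose of the degenerate case $\bx=\bzero$, whereas the paper proves Case~1 and declares the others analogous.
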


\begin{proof}
We prove the result in Case 1, as the other cases follow by similar arguments.
Assume that $\Psi = \verS(A)$.  Let $\Ec=\{(i,j):i<j,\{i,j\}\in E(\Gc)\}$.  Since $\bx$ is in the column space of $\Psi$ and the column space of $\TSS(A)$ is the tangent space of the isospectral manifold $\mei_A$ at $A$, there is a matrix $B$ in the tangent space such that $\vect_{\Ec}(B)=\bx$.  We may scale $B$ such that $\|B\|=1$.

Let $V$ be the subspace (which is a  manifold) of $n\times n$ symmetric matrices whose $i,j$-entry is zero if $\{i,j\}\in E(\overline{G\cup H})$.  Then $V^\perp$ contains matrices whose nonzero entries appear only  at those pairs that correspond to $E(\overline{G\cup H})$, which is also the complement of $\supp(\bx)$.  By our assumption, the set of rows in $\TSS(A)$ corresponding to the complement of $\supp(\bx)$ is linearly independent.  By Remark \ref{spacetvl}, $\mei_A$ and $V$ intersect transversally at $A$.  Also, $B$ is a common tangent to $V$ and $\mei_A$.

Applying Lemma \ref{HLStangent} with two manifolds $V$ and $\mei_A$, $\by=A$, and $\bv=B$, for every $\epsilon>0$ there is a matrix $A'$ with 
\[\left\|\frac{1}{\|A-A'\|}(A-A')-B\right\|<\epsilon.\]
such that $\mei_A$ and $V$ intersect transversally at $A'$.

We may pick $\epsilon$ small enough such that the nonzero entries of $B$ do not vanish in $\frac{1}{\|A-A'\|}(A-A')$, so the entries of $A'$ corresponding to $E(H)$ are nonzero.  Since $A'$ is close to $A$, the nonzero entries of $A$ do not vanish in $A'$, so entries of $A'$ corresponding to $E(G)$ are nonzero.  All these facts and  $A'\in V$ imply $A'\in\mptn(G\cup H)$.  This means $\mei_A$ and $\mptn(G\cup H)$ intersect transversally at $A'$, so $A'$ has the SSP and $\spec(A')=\spec(A)$.
\end{proof}

\begin{rem}  A vector $\bx$ as in the Matrix Liberation Lemma exists if and only if no row of $\Psi$ is zero. In this case,  a vector $\bx$ with every entry nonzero exists in the column span of $\Psi$, the graph $H=\Gc$, and Lemma \ref{mtxliblem} guarantees a matrix with the same spectrum as $A$ such that all off-diagonal entries are nonzero. For the SSP, the condition that   no row of $ \verS(A)$ is zero  is equivalent to $B$ not having a pair of rows that are zero  except for the same diagonal entry.
\end{rem}

We apply the Matrix Liberation Lemma to prove additional results.

\begin{lem}[Augmentation Lemma]\label{shaun}  Let $G$ be a graph on vertices $\{1,\ldots,n\}$ and $A\in\mptn(G)$.  Suppose $A$ has the SSP and $\lam$ is an eigenvalue of $A$ with multiplicity {$k\ge 1$}. Suppose that $\alpha$ is a subset of $\{1,\ldots, n\}$ of cardinality $k+1$ with the property that for every eigenvector $\bx$ of $A$ corresponding to $\lam$, $|\supp(\bx)\cap \alpha |\ge 2$.  
Construct $H$ from $G$ by appending vertex $n+1$ adjacent exactly to the vertices in $\alpha$. Then there exists a matrix $A'\in\mptn( H)$ such that $A'$ has the SSP,  the multiplicity of $\lam$ has increased from $k$ to $k+1$, and other eigenvalues and their  multiplicities are unchanged from those of $A$. \end{lem}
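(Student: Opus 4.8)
The plan is to realize the desired $A'$ as a spectrum-preserving perturbation of the block matrix $A_\lambda := A \oplus [\lambda] \in \mptn(G \dcup K_1)$ on the vertex set $\{1,\dots,n+1\}$, using the Matrix Liberation Lemma (Lemma \ref{mtxliblem}, Case 1). The point is that $\spec(A_\lambda) = \spec(A) \cup \{\lambda\}$, so $\lambda$ already has multiplicity $k+1$ in $A_\lambda$ while every other eigenvalue keeps its multiplicity; all that remains is to switch on the off-diagonal entries joining the new vertex $n+1$ to each vertex of $\alpha$ (and to no other vertex) without disturbing the spectrum. Thus it suffices to exhibit a vector $\bx$ in the column space of $\verS(A_\lambda)$ whose support is exactly $S := \{(j,n+1) : j \in \alpha\}$ and for which the rows of $\verS(A_\lambda)$ indexed by the complement of $S$ are linearly independent. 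Taking $H_0$ to be the spanning subgraph of $\overline{G\dcup K_1}$ with edge set $S$, the Matrix Liberation Lemma then yields $A' \in \mptn\big((G\dcup K_1)\cup H_0\big)=\mptn(H)$ with the SSP and $\spec(A')=\spec(A_\lambda)$, which is exactly the claim.

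To build $\bx$, I would read off the block structure of $\TSS(A_\lambda)$ from Lemma \ref{TS-sum}: after the standard permutation, the rows indexed by non-edges of $G$ involve only $\TSS(A)$ and vanish on the columns indexed by $(k,n+1)$, while each row indexed by $(i,n+1)$ equals the $i$th row of $A-\lambda I_n$ sitting in those same $(k,n+1)$-columns. Consequently, for any $\bd\in\R^n$, the combination $\sum_k d_k\,(\text{column }(k,n+1)\text{ of }\verS(A_\lambda))$ is supported on the rows $(i,n+1)$ and has $(i,n+1)$-entry equal to $[(A-\lambda I)\bd]_i$. Hence it is enough to find $\bd$ for which $\bw := (A-\lambda I)\bd$ has support exactly $\alpha$; the resulting $\bx$ then has support exactly $S$.

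The crux, and the main obstacle, is producing $\bw\in\range(A-\lambda I)$ with $\supp(\bw)=\alpha$. Since $A$ is symmetric, $\range(A-\lambda I)$ is the orthogonal complement of the $\lambda$-eigenspace $\mathcal W := \ker(A-\lambda I)$, so I need a vector supported on $\alpha$, with every $\alpha$-coordinate nonzero, that is orthogonal to $\mathcal W$. Restricting coordinates to $\alpha$, this asks for a full-support vector in the orthogonal complement inside $\R^{\alpha}$ of $U := \{\bv|_\alpha : \bv\in\mathcal W\}$. Because $\dim U \le \dim\mathcal W = k < k+1 = |\alpha|$, this complement $U^{\perp}$ is nonzero. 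Moreover $U^{\perp}$ fails to contain a full-support vector precisely when $U^{\perp}$ is contained in some coordinate hyperplane $\{u_i = 0\}$ with $i\in\alpha$, equivalently when the standard basis vector $\be_i$ of $\R^\alpha$ lies in $U$, i.e. when there is a $\lambda$-eigenvector $\bv$ with $\bv|_\alpha = \be_i$, that is $|\supp(\bv)\cap\alpha| = 1$. The hypothesis that $|\supp(\bv)\cap\alpha|\ge 2$ for every $\lambda$-eigenvector rules this out for all $i\in\alpha$; since a real vector space is never a finite union of proper subspaces, $U^{\perp}$ therefore contains a full-support vector $\bw|_\alpha$, and extending by zeros and solving $(A-\lambda I)\bd=\bw$ gives the required $\bd$.

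Finally I would verify the linear-independence condition on the complement of $S$, which consists of the non-edge rows of $\TSS(A)$ together with the rows $(i,n+1)$ for $i\notin\alpha$. By the block structure these two families are supported on disjoint sets of columns, so independence of the union reduces to independence within each: the first family is independent because $A$ has the SSP (equivalently $\verS(A)$ has full row rank), and the second is independent because a dependence among the rows $\{(A-\lambda I)_{i,:} : i\notin\alpha\}$ would produce a nonzero $\lambda$-eigenvector supported off $\alpha$, i.e. one with $|\supp(\bv)\cap\alpha| = 0$, again contradicting the hypothesis. (The two invocations of the hypothesis are complementary: existence of $\bw$ forbids $|\supp\cap\alpha|=1$, while independence forbids $|\supp\cap\alpha|=0$, and together they force the bound $\ge 2$.) With both conditions established, Lemma \ref{mtxliblem} delivers $A'\in\mptn(H)$ with the SSP and $\spec(A')=\spec(A)\cup\{\lambda\}$, so $\mult_{A'}(\lambda)=k+1$ with all other eigenvalues and multiplicities unchanged.
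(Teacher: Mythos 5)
Your proof is correct and takes essentially the same route as the paper's: pass to $A_\lambda=A\oplus[\lambda]$, use Lemma~\ref{TS-sum} to view $\verS(A_\lambda)$ as $\verS(A)\oplus(A-\lambda I)$ after permutation, check independence of the rows indexed by non-edges of $H$ (the SSP of $A$ for the $G$-block, plus the observation that a dependence among the rows $(i,n+1)$ with $i\notin\alpha$ would yield a $\lambda$-eigenvector supported off $\alpha$), produce a vector supported exactly on $\alpha$ lying in $\range(A-\lambda I)$, and finish with the Matrix Liberation Lemma~\ref{mtxliblem}. The only divergence is in one technical step: where you obtain the full-support vector orthogonal to the eigenspace via the fact that a real vector space is never a finite union of proper subspaces, the paper instead takes a basis matrix $N$ of $\ker(A-\lambda I)$, shows every $k\times k$ submatrix of $N[\alpha,:]$ is invertible, and concludes that any dependence relation among the $k+1$ rows of $N[\alpha,:]$ has all coefficients nonzero.
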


\bpf
Consider $A_{\lambda}= A\oplus [\lambda]$.  By Lemma \ref{TS-sum}, 
$\verS(A_\lambda) $ has the form 
$\verS(A) \oplus (A-\lambda I )$ after suitable permutation of rows and columns.
 The rows of $A-\lam I$ with index not in $\alpha$ (equivalently, the rows of $\verS(A_\lambda) $ indexed by $(j,n+1)$ with $j\not\in\alpha$)  are linearly independent; otherwise there is an eigenvector of $A$ corresponding to $\lambda$ whose support is disjoint from $\alpha$.  From this, the block diagonal structure of $\verS(A_\lambda)$, and the fact that $A$ has the SSP, all the rows of $\verS(A_\lambda) $ associated with non-edges of $H$ are linearly independent.

Let $N$ be a $n \times k$ matrix whose columns are a basis for the null space of $A-\lam I$. 
We show that every order $k$ submatrix of $N[\alpha,:]$ is  invertible: Suppose some  $k\x k$ submatrix $M$ of $N[\alpha,:]$ is  not invertible.  Then there is a vector $\bz$ such that $M\bz=\bzero$.  But then $N\bz$ is an eigenvector of 
$A-\lam I$ whose support intersects $\alpha$ in at most one element, which contradicts
the assumptions. 

Since the rows of the $(k+1)\times k$ matrix $N[\alpha,:]$ are linearly dependent, and each of the $k\times k$ submatrices of $N[\alpha,:]$ is invertible, each coefficient in a dependence relation is nonzero.  This gives a vector $\by$ with $\by\trans N=\bzero$ and $\supp(\by)=\alpha$.
Since $\by$ is orthogonal to each {column} of $N$, $\by$ is in the
row space (which equals the column space) of $A-\lam I$.  Since $\verS(A_\lambda) $ has the form $\verS(A) \oplus (A-\lambda I )$, there is a vector $\hat\by$ in the column space of $\verS(A)$ such that $\supp(\hat\by)$ is those edges $\{j,n+1\}$ with $j\in\alpha$.  The result now follows from Lemma \ref{mtxliblem}.
\epf

\begin{cor}\label{Cn211} For any list of distinct real numbers $\lam_1<\dots<\lam_{n-1}$ and integer $k$ with $1\le k\le n-1$,  there exists a matrix $A\in\mptn(C_n)$ with the SSP such that $A$ has eigenvalues $\lam_k$ with $\mult_A(\lam_k)=2$ and $\mult_A(\lam_i)=1$ for $i\ne k$. \end{cor}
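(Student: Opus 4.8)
The plan is to realize the desired matrix on $C_n$ by starting from a Jacobi matrix on the path $P_{n-1}$ and then closing the cycle by a single application of the Augmentation Lemma (Lemma~\ref{shaun}). The point is that $C_n$ is obtained from $P_{n-1}$, taken on the vertices $1,2,\dots,n-1$ listed in path order, by appending a new vertex $n$ adjacent precisely to the two endpoints $1$ and $n-1$. This is exactly the operation that the Augmentation Lemma performs when the eigenvalue being doubled has multiplicity $1$ and the attaching set $\alpha$ has cardinality $2$, so I would set up its hypotheses with $\alpha=\{1,n-1\}$.

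First I would invoke \cite[Remark~15]{genSAP} to obtain a matrix $A\in\mptn(P_{n-1})$ with the SSP whose spectrum is exactly $\{\lam_1,\dots,\lam_{n-1}\}$; since $P_{n-1}$ has $n-1$ vertices and these are $n-1$ distinct reals, such an SSP matrix exists and all of its eigenvalues are simple. In particular $\lam_k$ is an eigenvalue of $A$ of multiplicity $1$.

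The structural fact driving the argument is that $A$, being an irreducible tridiagonal (Jacobi) matrix, cannot have an eigenvector vanishing at either endpoint of the path. Indeed, if an eigenvector $\bx$ for $\lam_k$ had first coordinate $x_1=0$, then the first row of $(A-\lam_k I)\bx=\bzero$ would force $x_2=0$ because the superdiagonal entry is nonzero, and an immediate induction along the path would yield $\bx=\bzero$; the symmetric argument from the other end shows the last coordinate is nonzero as well. Hence for $\alpha=\{1,n-1\}$ every eigenvector $\bx$ of $A$ corresponding to $\lam_k$ satisfies $\supp(\bx)\cap\alpha=\alpha$, so $|\supp(\bx)\cap\alpha|=2$, which is precisely the support hypothesis of Lemma~\ref{shaun} with multiplicity $k=1$ and $|\alpha|=k+1=2$.

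Applying the Augmentation Lemma then produces a matrix $A'$ with the SSP on the graph $H$ formed from $P_{n-1}$ by appending a vertex adjacent exactly to $\{1,n-1\}$, namely $H=C_n$, in which the multiplicity of $\lam_k$ has risen from $1$ to $2$ while every other $\lam_i$ remains a simple eigenvalue. This $A'\in\mptn(C_n)$ is exactly the matrix claimed. The only step requiring care is the verification of the support condition, but it reduces to the standard nonvanishing-at-endpoints property of Jacobi eigenvectors, so I do not expect a genuine obstacle.
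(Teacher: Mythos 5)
Your proposal is correct and follows essentially the same route as the paper's own proof: both start from an SSP matrix on $P_{n-1}$ with spectrum $\{\lam_1,\dots,\lam_{n-1}\}$ via \cite[Remark~15]{genSAP}, both establish the nonvanishing of eigenvector entries at the path's endpoints by the same inductive tridiagonal argument, and both conclude by applying the Augmentation Lemma (Lemma~\ref{shaun}) with $\alpha=\{1,n-1\}$ to append the new vertex and form $C_n$. No gaps; the argument matches the paper's proof step for step.
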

\bpf By \cite[Remark~15]{genSAP}, there is a matrix $A\in\mptn(P_{n-1})$ with the SSP and $\spec(A)=\{\lam_1,\dots,\lam_{n-1}\}$; let $\bv=[v_i]$ be an eigenvector of $A$ with respect to $\lam_k$.    By the structure of $A$ and $(A-\lambda I)\bv=\bzero$, if $v_1=0$ then we can see inductively that $v_2,\ldots ,v_n$ are all zero, so $v_1\ne 0$;  similarly $v_{n-1}\ne0$.  Then by applying Lemma \ref{shaun} with $\alpha=\{1,n-1\}$, there is a matrix $A'\in\mptn(C_n)$ with the SSP and $\spec(A')=\{\lam_1,\dots,\lam_{k-1},\lam_k,\lam_k,\lam_{k+1},\dots,\lam_{n-1}\}$. \epf

The next result  is not required for the rest of the paper, but it gives a different way to compute the verification matrices.  This result displays  the verification matrices as the coefficient matrices of systems of homogeneous equations with the variables on the left (for the traditional view transpose the verification matrices). 
Let $X_{ij}=E_{ij}+E_{ji}$ and $E_o=\{(i,j):1\leq i<j\leq n\}$ the set of off-diagonal pairs.

\begin{prop}\label{normalspace}
Let $G$ be a graph and $A\in\mptn(G)$.  Then 
\begin{enumerate}
\item The $(i,j)$-row of $\verS(A)$ is $\vect_{E_o}(AX_{ij}-X_{ij}A)$.
\item The SMP verification matrix $\verM(A)$ is by augmenting $\verS(A)$ with $q$ columns $\vect_{\Ec}(A^k)$ for $k=0,1,\ldots, q-1$, where $\overline{E}=\{(i,j):i<j,\{i,j\}\in E(\overline{G})\}$.
\item The $(i,j)$-row of $\verA(A)$ is $\vect_{E_o}(AX_{ij})$.
\end{enumerate}
\end{prop}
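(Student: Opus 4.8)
The plan is to derive all three parts from a single entrywise identity that exchanges the roles of the row index $(i,j)$ and the column index $(k,\ell)$ of the tangent space matrices. Recall from Definition~\ref{TSmatrices} that the $(k,\ell)$-column of $\TSS(A)$ is $\vect(AK_{k\ell}+K_{\ell k}A)$, so the entry of that column in the row indexed by $(i,j)$ is exactly $(AK_{k\ell}+K_{\ell k}A)_{ij}$; since $\verS(A)=\TSS(A)[\Ec,:]$, the $(i,j)$-row of $\verS(A)$ is precisely the vector whose $(k,\ell)$-coordinate is this same number. Thus part~(1) reduces to the claim
\[(AK_{k\ell}+K_{\ell k}A)_{ij}=(AX_{ij}-X_{ij}A)_{k\ell}\qquad\text{for all }i,j,k,\ell,\]
and, in the same way, part~(3) will reduce to $(AE_{k\ell}+E_{\ell k}A)_{ij}=(AX_{ij})_{k\ell}$.

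First I would establish the SSP identity by direct expansion. Using $K_{\ell k}=-K_{k\ell}$ rewrites the left side as the commutator $[A,K_{k\ell}]_{ij}$, and substituting $K_{k\ell}=E_{k\ell}-E_{\ell k}$ together with the rule $(E_{ab})_{cd}=\delta_{ac}\delta_{bd}$ expands it into a sum of four terms of the shape $\pm a_{\ast\ast}\delta_{\ast\ast}$. Performing the analogous expansion on the right-hand side with $X_{ij}=E_{ij}+E_{ji}$ produces four matching terms, and the two lists coincide term by term once the symmetry $a_{ab}=a_{ba}$ (and $\delta_{ab}=\delta_{ba}$) is invoked. It is worth recording that $AX_{ij}-X_{ij}A$ is skew-symmetric, since its transpose is $[X_{ij},A]=-[A,X_{ij}]$; hence its strictly upper-triangular entries determine it completely, which is exactly why restricting to $\vect_{E_o}$ (the pairs $k<\ell$, which are also the column indices of $\verS(A)$) loses no information and yields the stated row. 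Part~(3) is the same computation with $E_{k\ell}$ replacing $K_{k\ell}$ and the single product $AX_{ij}$ replacing the commutator; here the column index $(k,\ell)$ of $\verA(A)$ ranges over all ordered pairs $1\le k,\ell\le n$, matching the (non-symmetric) matrix $AX_{ij}$ read off entrywise. Part~(2) needs no computation: by Definition~\ref{TSmatrices}, $\TSM(A)$ is $\TSS(A)$ with the extra columns $\vect(A^k)$ adjoined, so restricting every row to $\Ec$ shows that $\verM(A)=\TSM(A)[\Ec,:]$ is $\verS(A)$ with the columns $\vect_{\Ec}(A^k)$, $k=0,\dots,q-1$, adjoined, which is precisely the assertion.

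The only genuine obstacle is index bookkeeping: one must keep straight that a \emph{column} of the tangent space matrix, once a fixed row is selected, becomes a \emph{row} of the verification matrix, so that the roles of $(i,j)$ and $(k,\ell)$ are interchanged between the two descriptions, and one must track carefully that it is the symmetry of $A$ that forces the two four-term expansions to agree. I would therefore write out the expansion once in full for the SSP case, then remark that the SAP case is identical after replacing $K_{k\ell}$ by $E_{k\ell}$ and the commutator by $AX_{ij}$, and note that the SMP case follows immediately from the definition.
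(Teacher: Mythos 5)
Your proposal is correct and follows essentially the same route as the paper: both reduce each part to the entrywise identity exchanging the row index $(i,j)$ and column index $(k,\ell)$, and verify it by expanding $K_{k\ell}$ (resp.\ $E_{k\ell}$) and $X_{ij}$ into elementary matrices and invoking the symmetry of $A$; your Kronecker-delta bookkeeping is just a notational variant of the paper's $\be_i\trans(\cdot)\be_j$ computation, and both treat part (2) as immediate from the definitions. No gap to report.
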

\begin{proof}
Let $n$ be the number of vertices of $G$.  For fixed $i,j$ with $1\leq i\leq j\leq n$, the $(i,j)$-row of $\verS(A)$ is the $(i,j)$-row of $\TSS(A)$ by definition, and the $(k,\ell)$-entry is 

\[\begin{aligned}
\be_i\trans(AK_{k\ell}+K_{\ell k}A)\be_j &=\be_i\trans(AE_{k\ell}-AE_{\ell k}+E_{\ell k}A-E_{k\ell}A)\be_j \\
 &= \be_i\trans(A\be_k\be_\ell\trans-A\be_\ell\be_k\trans+\be_\ell\be_k\trans A-\be_k\be_\ell\trans A)\be_j \\
 &= \be_i\trans A\be_k\be_\ell\trans\be_j-\be_i\trans A\be_\ell\be_k\trans\be_j+\be_i\trans \be_\ell\be_k\trans A\be_j-\be_i\trans \be_k\be_\ell\trans A\be_j \\
 &= \be_\ell\trans\be_j\be_i\trans A\be_k-\be_k\trans\be_j\be_i\trans A\be_\ell+\be_k\trans A\be_j\be_i\trans \be_\ell-\be_\ell\trans A\be_j\be_i\trans \be_k \\
 &= \be_k\trans A\be_i\be_j\trans\be_\ell-\be_k\trans\be_j\be_i\trans A\be_\ell+\be_k\trans A\be_j\be_i\trans \be_\ell-\be_k\trans\be_i\be_j\trans A\be_\ell \\
 &=\be_k\trans(AE_{ij}-E_{ji}A+AE_{ji}-E_{ij}A)\be_\ell \\
 &=\be_k\trans(AX_{ij}-X_{ij}A)\be_\ell, \\
\end{aligned}\]
which is the $(k,\ell)$-entry of $AX_{ij}-X_{ij}A$.  This deals with (1).  For (2), it follows directly from Definition \ref{TSmatrices} and Definition \ref{vermtxdef}.  Finally, for the $(i,j)$-row of $\verA(A)$, the $(k,\ell)$-entry is 
\[\begin{aligned}
\be_i\trans(AE_{k\ell}+E_{\ell k}A)\be_j 
 &= \be_i\trans A\be_k\be_\ell\trans\be_j+\be_i\trans\be_\ell\be_k\trans A\be_j \\
 &= \be_\ell\trans\be_j\be_i\trans A\be_k+\be_k\trans A\be_j\be_i\trans\be_\ell \\
 &= \be_k\trans A\be_i\be_j\trans\be_\ell+\be_k\trans A\be_j\be_i\trans\be_\ell \\
 &= \be_k\trans (AX_{ij})\be_\ell,
\end{aligned}\]
which is the $(k,\ell)$-entry of $AX_{ij}$.  This gives (3).
\end{proof}


\subsection*{Acknowledgments}  We thank the Banff International Research Station for providing a wonderful research environment where most of this research was done.  We thank the American Institute of Mathematics for bringing the authors together, both originally more than ten years ago, and recently, which allowed us to further advance this work. 

\end{document}